\newtheorem{hyp}{Hypothesis}
\author{%
  Charles Bertucci $^1$  }
\newtheorem{Theorem}{Theorem}[section]
\newtheorem{Lemma}{Lemma}[section]
\newtheorem{Rem}{Remark}[section]
\newtheorem{Def}{Definition}[section]
\newtheorem{Prop}{Proposition}[section]
\title{Monotone solutions for mean field games master equations : finite state space and optimal stopping}
\thanks{$^1$ : CMAP, Ecole Polytechnique, UMR 7641, 91120 Palaiseau, France
}
\date{} 
\begin{document}
\maketitle
\begin{abstract}
We present a new notion of solution for mean field games master equations. This notion allows us to work with solutions which are merely continuous. We prove first results of uniqueness and stability for such solutions. It turns out that this notion is helpful to characterize the value function of mean field games of optimal stopping or impulse control and this is the topic of the second half of this paper. The notion of solution we introduce is only useful in the monotone case. We focus in this paper in the finite state space case.\end{abstract}
\tableofcontents
\section*{Introduction}
In some sense, this paper is the fourth of a series devoted to the systematic study of mean field games (MFG for short) of optimal stopping or impulse control. Here we study the master equation associated with optimal stopping or impulse control in finite state space. In order to do so, we introduce a new notion of solution for the master equation which is of interest outside the cases of optimal stopping or impulse controls. In \citep{bertucci2018optimal}, by considering Nash equilibria of such games without common noise, we showed that those equilibria are in general in mixed strategies and translated this statement in terms of the system of partial differential equations (PDE) characterizing them. In \citep{bertucci2020fokker}, we extended this notion to the case of impulse control. In \citep{bertucci2020remark} we presented numerical methods for such problems.

\subsection*{General introduction}
The MFG theory is concerned with the study of games involving an infinite number of non-atomic players interacting through mean field terms. If such games have been studied in Economics for quite some time, a general mathematical framework has only been developed around fifteen years ago by J.-M. Lasry and P.-L. Lions. It is presented in \citep{lasry2007mean,lions2007cours}. This theory has known too many developments for us to present them all but we are going to indicate some of them. For the moment, uniqueness of Nash equilibria has been proven almost exclusively in two cases, either under a smallness condition (on the coupling between the players or on the duration of the game) or in the so-called monotone regime, see \citep{lasry2007mean,lions2007cours,porretta2015weak}. In this monotone regime, the study of the Nash equilibria of the game reduces to the study of the master equation, a PDE which is satisfied by the value function of a player seen as depending on its own state and on the measure describing the states of the other players. As soon as the state space is infinite, the master equation is an infinite dimensional PDE \citep{cardaliaguet2019master}, whereas in the finite state space case, this PDE reduces to a system of finite dimensional PDE \citep{lions2007cours}. In the case in which the game is deterministic or when the randomness is distributed in an i.i.d. fashion among the players, Nash equilibria can be characterized with a system of finite dimensional forward and backward equations \citep{lasry2007mean,cardaliaguet2019master}. Several other aspects of MFG have been studied, such as their long time average \citep{cardaliaguet2019long,cardaliaguet2012long} and the convergence of the $N$ player game toward the MFG \citep{cardaliaguet2019master,lacker2018convergence}. Let us also recall that a probabilistic approach of MFG have been developed, we refer to \citep{carmona2018probabilistic,lacker2016general} for more details on this approach. Finally let us mention that numerical methods have been developed to compute equilibria of MFG, mostly in the forward-backward setting, more details can be found in \citep{achdou2010mean,achdou2020mean,briceno2019implementation}.
In recent years, the study of MFG of optimal stopping or other "singular" controls have been the subject of a growing number of researches, namely because such games have natural applications in Economy. Concerning the case of optimal stopping, let us mention \citep{bertucci2018optimal, carmona2016mean,nutz2016mean,nutz2020convergence}, for impulse control we refer to \citep{bertucci2020fokker} and to \citep{gomes2016weakly} for optimal switching. Let us also mention that the approach of \citep{bertucci2018optimal,bertucci2020fokker} has been used by P.-L. Lions in \citep{lions2018cours} to study a case of MFG of singular controls.

\subsection*{Regularity of the solution of the master equation}
In the monotone regime, it is known since the work of J.-M. Lasry and P.-L. Lions that we can define a value function for a MFG. This is a consequence of a uniqueness property of Nash equilibria of the game. Let us recall that, formally, the monotone regime is a situation in which the players have a tendency to repel each other. Precise assumptions on the monotonicity shall be made later on.

If it is smooth, this value function satisfies the master equation. However the smoothness of this value function may be difficult to prove in general \citep{cardaliaguet2019master} and for the moment no general weak notion of solution for the master equation has been established. We provide in this paper a notion of solution which demands only for the value function to be continuous. We refer to these solutions as monotone solutions since our approach relies heavily on the monotonicity of the MFG. Let us mention that in this paper we present this solutions in the finite state space case, but that this approach can be extended. In fact the case of a continuous state space shall be treated in \citep{bertucci2020forthcomming}.

\subsection*{Mean field games with optimal stopping or singular control}
An objective of this paper is to study the master equation, in finite state space, associated to optimal stopping or impulse controls. In those situations, the players can, respectively, decide to exit the game or to change instantly their state to any other state. The main difficulty arising from the study of those games is that the evolution of a population of players using such controls is not smooth in general, independently of any regularity assumptions. This fact, already presented in \citep{bertucci2018optimal,bertucci2020fokker}, is a crucial obstacle to overcome. Indeed as the evolution of the population of player is not smooth, the formulation of the associated master equation has to take into account the fact that this measure can instantly jump from one state to another. As it is often the case in the optimal control literature, we shall see that this singular behavior for the evolution of the density of players does not translate into a loss of regularity for the value function. We believe that, in some sense, this is reminiscent of the problem of Hamilton-Jacobi equations associated with singular controls as in \citep{lasry2000classe}.

\subsection*{A comment on modeling}
Let us comment on a modeling choice we make in this paper. We intend to look at the master equation set on the whole $(\mathbb{R}_+)^d$. In the continuous state space case, in the setting of \citep{cardaliaguet2019master} for instance, this would correspond to look at the master equation posed on the set of positive measures instead that on the set of probability measure. In the appendix we explain how we can pass from one setting to another in finite state space. This choice to work on $(\mathbb{R}_+)^d$ is clearly motivated in the optimal stopping case as the mass of players is not constant (it can decrease). However, we argue that this kind of approach is also meaningful in general. Indeed, in a MFG the number of player is infinite and the choice to normalize their mass to one is only an arbitrary choice. If in a lot of cases studied in the literature the mass of player is preserved, it is also natural to consider, outside of stopping or entry game, MFG in which players leave or enter the game. For instance let us refer to \citep{claisse2019mean}. Moreover, in the case in which the mass of player is preserved, solving the master equation for any initial mass of player is a suitable strategy. To conclude this word on modeling, let us mention that to have the same point of view in the probabilistic approach of MFG, in which the measure characterizing the distribution of players is often thought as the probability measure of the state of one player, one need to consider either unnormailzed measures for the state of a player or either an additional real parameter which stands for the mass of players.

\subsection*{Structure of the paper}
The rest of this paper is organized as follows. In section 1, we present some preliminary results concerning the study of master equations in the finite state space case, in particular for situations in which there is a boundary. In section 2, we present our notion of monotone solutions. In section 3, we derive a characterization for the value function in the optimal stopping case, as well as we prove existence and uniqueness for such value functions. We present in section 4 the master equation associated with an impulse control problem. Finally section 5 is concerned with a MFG model of entry and exit in a simplified economic market which applies the concepts developed in the previous sections.

\subsection*{Notations}
We shall use the following notations.
\begin{itemize}
\item We denote by $d$ an integer greater than $1$.
\item $\mathbb{O}_d = (\mathbb{R}_+)^d$.
\item The euclidean norm of $\mathbb{R}^d$ is denoted $|\cdot|$ and its associated operator norm $\|\cdot\|$. The euclidean scalar product between $x,y \in \mathbb{R}^d$ is denoted either $x \cdot y$ or $\langle x, y \rangle$.
\item An application $A: \mathcal{O} \to \mathbb{R}^d$, defined on a subset $\mathcal{O} \subset \mathbb{R}^d$ is said to be monotone if
\begin{equation*}
\langle A(x)- A(y) , x - y \rangle \geq 0, \text{ for } x,y \in \mathcal{O}.
\end{equation*}
\item We define for $R\geq 0$, $B^1_R := \{ x \in \mathbb{O}_d, x_1 + ... + x_d \leq R\}$.
\item For $A \in \mathcal{L}(\mathbb{R}^d)$, we define 
\begin{equation*}
\|A\|_{2,-} := \inf \{ \langle \xi, A\xi\rangle | |\xi|^2 \leq 1\}.
\end{equation*}
\item We denote the term by term product between matrices or vector by $*$.
\item For $x \in \mathbb{R}^d$, we note $x\leq 0$ when all the components of $x$ are negative.
\item If $f$ is a real function and $x\in \mathbb{R}^d$, then $f(x) := (f(x_1),...,f(x_d))$.
\end{itemize}

\section{Preliminary results} 
This section introduces extensions of the results of \citep{lions2007cours} to the case in which the master equation is posed on a domain of $\mathbb{R}^d$, in particular when it is $(\mathbb{R}_+)^d := \mathbb{O}_d$. We also recall a weak form of Stegall's variational lemma at the end of this section. Let us mention the work \citep{bayraktar2019finite}, which is also concerned with the study of the master equation in finite state space. In this article the authors derived and studied a particular form of the master equation arising from the presence of a so-called Wright-Fischer common noise in the MFG. This work is radically different than ours because they use the structure of the noise to establish regularity in a non-monotone setting while we make an extensive use of monotonicity to avoid having to use regularity, without using the structure of the randomness.

The main results of this section are concerned with existence and uniqueness of solutions of the master equation

\begin{equation}\label{me}
\begin{aligned}
\partial_t U + (F(x,U)\cdot \nabla_x)U + \lambda (U - T^*U(t,Tx))&= G(x,U) \text{ in } (0,\infty) \times \mathbb{O}_d,\\
U(0,x) &= U_0(x) \text{ in } \mathbb{O}_d,
\end{aligned}
\end{equation}
or its discounted stationary counterpart
\begin{equation}\label{sme}
r U + (F(x,U)\cdot \nabla_x)U + \lambda (U - T^*U(t,Tx))= G(x,U) \text{ in }  \mathbb{O}_d.
\end{equation}
where $U : (0,\infty)\times \mathbb{O}_d \to \mathbb{R}^d$ is the value function of the MFG which is characterized by $F, G : \mathbb{O}_d \times \mathbb{R}^d\to \mathbb{R}^{2d}$, $\lambda > 0$ and $T \in \mathcal{L}(\mathbb{R}^d)$. Let us note that these PDE are not written on an open set and that no boundary conditions are imposed.

The interpretation of (\ref{me}) is that it is the PDE satisfied by the value function of the MFG. The value of the game in the state $i\in \{1,...,d\}$, when the quantity of player in each state $j \in \{1,...,d\}$ is $x_j$ and the time remaining in the game is $t\geq 0$ is : $U^i(t,x)$. The term $\lambda(U - T^* U\circ T)$ stands for the modeling of common noise and we refer to \citep{bertucci2019some} for more details on this question. The function $F$ stands for the evolution of the "density" of players and $G$ for the evolution of the value of the MFG. That is, in the case $\lambda = 0$, the characteristics of (\ref{me}) are given by $(V(t), y(t))_{0\leq t \leq t_f}$ solution of 
\begin{equation}\label{char}
\begin{cases}
\dot{V}(t) = G(y(t),V(t)), 0 < t < t_f , V(0) = U_0(y(0)),\\
\dot{y}(t) = F(y(t),V(t)), 0<t<t_f, y(t_f) = y_0,
\end{cases}
\end{equation}
for any $y_0 \in \mathbb{O}_d$, $t_f \geq 0$. The same kind of interpretation holds for (\ref{sme}) and we do not detail it here. Let us only insist on the obvious fact that the characteristics associated to (\ref{sme}) are set on an infinite time scale and that despite the fact the equation (\ref{sme}) is stationary, the evolution of the density of players is not trivial.

Because no boundary conditions are imposed on the equation, we have to restrict the set of functions $F$ which leave invariant $\mathbb{O}_d$ as well as transformation $T$ which leave invariant $\mathbb{O}_d$. Namely we shall assume the following.

\begin{hyp}\label{stab}
The function $F$ is such that for any $p \in \mathbb{R}^d$, $i\in \{1,...,d\}$
\begin{equation}\label{stabF}
x^i = 0 \Rightarrow F^i(x,p) \leq 0.
\end{equation}
Moreover $T(\mathbb{O}_d) \subset \mathbb{O}_d$.
\end{hyp}
\begin{Rem}
In certain situations, we may be force to consider a coupling term $F$ such that (\ref{stabF}) does not hold. In such a situation, we may be able to obtain, from other assumptions, that 
\begin{equation}
x^i = 0 \Rightarrow F^i(x,U(x)) \leq 0,
\end{equation}
for a solution $U$ of the master equation, which is sufficient. We refer to \citep{bertucci2020mean} for an example of such a situation.
\end{Rem}
This assumption clearly enforces the fact that $\mathbb{O}_d$ is invariant for the trajectories generated by (\ref{char}). We shall also made the following assumption in the stationary case.
\begin{hyp}\label{grow}
There exists $R > 0$ such that for all $x \in \mathbb{O}_d$ with $|x|_1 \geq R$, for all $p \in \mathbb{R}^d$,
\begin{equation}
\sum_{i = 1}^d F^i(x,p) \geq 0.
\end{equation}
Moreover, $T(B^1_R) \subset B^1_R$.
\end{hyp}
This assumption states that when the mass of player is sufficiently large ($|x|_1 \geq R$), it cannot grow anymore. Thus it has the effect to bound the region of interest when starting from an initial distribution of mass. Although it seems possible to treat situations in which the mass of players can always increase, it is not the objective of this paper.

As already mentioned several times above, monotonicity plays a crucial role in the well-posedness of (\ref{me}) and (\ref{sme}). We say that we are in the monotone regime when the following assumption is satisfied. 
\begin{hyp}\label{mon}
The functions $U_0$ and $(G,F)$ are monotone.
\end{hyp}
In order to obtain existence of solutions of either (\ref{me}) or (\ref{sme}), we shall use the following stronger assumption.
\begin{hyp}\label{smooth}
The functions $U_0$, $F$ and $G$ are lipschitz continuous. Moreover, there exists $\alpha > 0$ such that 
\begin{equation}
\langle \xi, D_x U_0(x) \xi\rangle \geq \alpha |D_xU_0(x)\xi|^2, \forall x \in \mathbb{O}_d, \xi \in \mathbb{R}^d,
\end{equation}
\begin{equation}
\begin{pmatrix}
D_x G(x,p) & D_x F(x,p) \\
D_p G(x,p) & D_p F(x,p)
\end{pmatrix}\geq \alpha \begin{pmatrix}
Id & 0 \\
0 & 0
\end{pmatrix} \forall x \in \mathbb{O}_d, p \in \mathbb{R}^d,
\end{equation}
in the order of positive matrices.
\end{hyp}
When addressing the existence of solutions of equations of the type of (\ref{sme}), we shall also make the following assumption.
\begin{hyp}\label{strongstat}
The discount rate $r$ satisfies
\begin{equation}
r > \|D_xF(x,p)\|_{2,-} \forall x \in \mathbb{O}_d,p \in \mathbb{R}^d.
\end{equation}
\end{hyp}

\subsection{Uniqueness results for the master equation in finite state space}
We now present uniqueness results concerning (\ref{me}) and (\ref{sme}). Those results are direct extensions of the results established in \citep{lions2007cours} and do need any new ideas, however, as they play a crucial role in the notion of solution we introduce in the next section, we detail their proofs.
\begin{Prop}\label{uniq1}
Under hypotheses \ref{stab} and \ref{mon}, there exists at most one smooth solution of (\ref{me}). If this solution exists, it is monotone.
\end{Prop}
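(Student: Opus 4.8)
The plan is to follow the classical Lasry--Lions argument adapted to the characteristics system \eqref{char} (and its $\lambda>0$ variant). Suppose $U$ and $V$ are two smooth solutions of \eqref{me} with the same initial datum $U_0$. Fix a final time $t_f>0$ and a terminal point $y_0\in\mathbb{O}_d$. For $U$, let $(V_U(t),y_U(t))$ solve \eqref{char} with $y_U(t_f)=y_0$; similarly define $(V_V(t),y_V(t))$ for $V$. Along these characteristics one has $U(t,y_U(t))=V_U(t)$ and $V(t,y_V(t))=V_V(t)$, while the $\lambda$-term contributes a linear zeroth-order perturbation. The first step is therefore to write down the ODE satisfied by the ``crossed'' quantity
\[
\phi(t) := \langle U(t,y_V(t)) - V(t,y_V(t)),\, y_U(t) - y_V(t)\rangle
\]
or, more symmetrically, a sum of such crossed evaluations, and to differentiate it in $t$.

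The key computation is that, after using the PDE \eqref{me} for both $U$ and $V$ and the characteristic ODEs, the derivative $\dot\phi(t)$ reduces to terms of the form $-\langle (G,F)(y_U,U)-(G,F)(y_V,V),\,(U-V,y_U-y_V)\rangle$ plus the common-noise contribution. By Hypothesis~\ref{mon}, the monotonicity of $(G,F)$ makes the leading bracket have a favourable sign; the $U_0$-monotonicity handles the terminal/initial term; and the operator $T$, being linear with $T(\mathbb{O}_d)\subset\mathbb{O}_d$ by Hypothesis~\ref{stab}, lets the $\lambda$-term be absorbed or controlled by Gr\"onwall. Here Hypothesis~\ref{stab} is what guarantees the characteristics stay in $\mathbb{O}_d$, so that all the evaluations above make sense and no spurious boundary contributions appear. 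Running Gr\"onwall's lemma then forces $U(t,y_U(t))=V(t,y_V(t))$ along matched characteristics, and since through every $(t,x)$ there passes such a characteristic (by the flow property of \eqref{char} for smooth $F,G$), we conclude $U\equiv V$ on $(0,\infty)\times\mathbb{O}_d$.

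For the monotonicity of the (unique) solution, the idea is to propagate monotonicity backward along characteristics: take two terminal points $y_0,z_0$, run the characteristics for $U$ back from each, and track $\psi(t):=\langle U(t,y(t))-U(t,z(t)),\,y(t)-z(t)\rangle$. Differentiating and again using the PDE plus the monotonicity of $(G,F)$ shows $\dot\psi$ has a sign that keeps $\psi\geq 0$ once it is $\geq 0$ at the initial time, where positivity holds because $U_0$ is monotone; the $\lambda$-term is handled as above using $T(\mathbb{O}_d)\subset\mathbb{O}_d$ and a Gr\"onwall estimate. Reading this at the common time then gives $\langle U(t,x)-U(t,x'),x-x'\rangle\geq 0$ for all $x,x'$.

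The main obstacle I expect is bookkeeping the common-noise term $\lambda(U - T^*U(t,Tx))$ correctly in the crossed derivative: one must check that, after pairing with $x-y$, this term either inherits a good sign from the monotonicity structure under composition with the linear map $T$ or is at worst a bounded linear perturbation amenable to Gr\"onwall, and that it does not interfere with the invariance of $\mathbb{O}_d$. The purely first-order part is the standard Lasry--Lions cancellation and should be routine; the care is entirely in the $\lambda$-term and in justifying that smooth $F,G$ give a well-defined global flow of \eqref{char} inside $\mathbb{O}_d$.
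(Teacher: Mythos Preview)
Your approach has a genuine gap when $\lambda>0$. The term $\lambda(U - T^*U(t,Tx))$ is \emph{nonlocal}: along a characteristic $y_U(\cdot)$ it brings in the value $U(t,T y_U(t))$, which lies on a \emph{different} trajectory. Consequently there is no closed ODE for any ``crossed'' scalar quantity built from a single pair of characteristics, and Gr\"onwall along such a pair cannot absorb this term --- it is not a bounded zeroth-order perturbation of $\phi$, it couples $\phi$ to values of $U$ and $V$ at uncontrolled points $T y_U(t)$, $T y_V(t)$. (Note also that the paper defines the characteristics \eqref{char} only in the case $\lambda=0$ for precisely this reason.) A secondary issue: your $\phi(t)=\langle U(t,y_V(t))-V(t,y_V(t)),\,y_U(t)-y_V(t)\rangle$ evaluates both $U$ and $V$ at $y_V(t)$; differentiating $U(t,y_V(t))$ produces $\partial_t U + F(y_V,V)\cdot\nabla_x U$, which does not match the transport field $F(\cdot,U)$ in the PDE for $U$, so the hoped-for cancellation fails. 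The natural quantity is $\langle U(t,y_U(t))-V(t,y_V(t)),\,y_U(t)-y_V(t)\rangle$, but even this inherits the nonlocal coupling just described once $\lambda>0$.

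The paper's proof avoids characteristics entirely. It sets
\[
W(t,x,y)=\langle U(t,x)-V(t,y),\,x-y\rangle
\]
on the doubled domain $(0,\infty)\times\mathbb{O}_d^2$ and computes the linear transport PDE it satisfies there. The nonlocal contribution becomes $\lambda\bigl(W(t,x,y)-W(t,Tx,Ty)\bigr)$, which has the right sign at a global minimum of $W$ because $T(\mathbb{O}_d)\subset\mathbb{O}_d$; a maximum-principle lemma (Lemma~\ref{max1}) then yields $W\geq 0$ directly, without any Gr\"onwall step. From $W\geq 0$ one reads off both $U=V$ (else $W$ would change sign near the diagonal) and the monotonicity of $U$. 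If you want to salvage a characteristics viewpoint for $\lambda=0$, it can be made to work with the corrected $\phi$, but the doubled-variable maximum-principle argument is what handles the general case.
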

\begin{proof}
Let us take $U$ and $V$ two smooth solutions of (\ref{me}). We define $W$ by
\begin{equation}
W(t,x,y) = \langle U(t,x) - V(t,y) , x - y \rangle.
\end{equation}
This function satisfies
\begin{equation}
W(0,x,y) = \langle U_0(x) - U_0(y), x - y\rangle \text{ in }\mathbb{O}_d^2,
\end{equation}
\begin{equation}
\begin{aligned}
\partial_t W +& F(x,U)\cdot \nabla_x W + F(y,V)\cdot \nabla_y W + \lambda(W - W(t,Tx,Ty))\\
 &= \langle G(x,U) - G(y,V), x-y\rangle + \langle F(x,U) - F(y,V), U - V \rangle \text{ in } (0, \infty) \times \mathbb{O}_d^2.
\end{aligned}
\end{equation}
From hypothesis \ref{mon} the right hand side of the previous two equations are positive. From lemma \ref{max1} (in appendix), we deduce that $W$ is positive for all time. We can conclude that i) $U = V$ (or otherwise $W$ should change sign around some point), ii) $U$ is monotone for all time.
\end{proof}
\begin{Prop}\label{uniq2}
Under hypotheses \ref{stab} - \ref{mon}, there exists at most one smooth solution of (\ref{sme}), and if it exists it is monotone.
\end{Prop}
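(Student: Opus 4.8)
The plan is to transcribe the proof of Proposition \ref{uniq1} to the stationary setting, the only genuinely new feature being the loss of compactness. Given two smooth solutions $U,V$ of (\ref{sme}), I would set $W(x,y) = \langle U(x) - V(y), x - y\rangle$ on $\mathbb{O}_d^2$. Differentiating $W$ and substituting (\ref{sme}) once for $U$ at $x$ and once for $V$ at $y$ — the same computation as in Proposition \ref{uniq1} with $\partial_t W$ replaced by $rW$ — and using that $\langle T^*U(Tx) - T^*V(Ty), x-y\rangle = \langle U(Tx) - V(Ty), Tx - Ty\rangle = W(Tx,Ty)$, one finds
\begin{equation*}
\begin{aligned}
& rW + (F(x,U(x))\cdot \nabla_x)W + (F(y,V(y))\cdot \nabla_y)W + \lambda\bigl(W - W(Tx,Ty)\bigr) \\
& \qquad = \langle G(x,U(x)) - G(y,V(y)), x-y\rangle + \langle F(x,U(x)) - F(y,V(y)), U(x) - V(y)\rangle,
\end{aligned}
\end{equation*}
and the right-hand side is exactly $\langle (G,F)(x,U(x)) - (G,F)(y,V(y)),\, (x,U(x)) - (y,V(y))\rangle \geq 0$ by Hypothesis \ref{mon}.

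Next I would prove $W \geq 0$ on $\mathbb{O}_d^2$ by a stationary maximum principle, i.e. the analogue of Lemma \ref{max1} for (\ref{sme}). Assuming for a moment that $W$ attains its infimum $m$ at some $(\bar x, \bar y)$, I evaluate the displayed identity there: for coordinates $i$ with $\bar x^i > 0$ one has $\partial_{x^i}W = 0$, while for $\bar x^i = 0$ one has $\partial_{x^i}W \geq 0$ together with $F^i(\bar x, U(\bar x)) \leq 0$ by Hypothesis \ref{stab}, so in every case $(F(\bar x, U(\bar x))\cdot\nabla_x)W \leq 0$; the $y$-transport term is $\leq 0$ for the same reason; and $W(T\bar x, T\bar y) \geq m$ (using $T(\mathbb{O}_d) \subset \mathbb{O}_d$) makes the nonlocal term $\leq 0$. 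Hence the left-hand side at $(\bar x,\bar y)$ is $\leq rm$, while the right-hand side is $\geq 0$, so $rm \geq 0$ and therefore $m \geq 0$ since $r>0$. Once $W \geq 0$ everywhere, $U = V$: at an interior point $z$ with $U(z) \neq V(z)$, perturbing to $x_\varepsilon = z - \varepsilon\,(U(z)-V(z)) \in \mathbb{O}_d$ makes $W(x_\varepsilon, z) < 0$ for small $\varepsilon$, a contradiction, and the identity on the interior extends to $\mathbb{O}_d$ by continuity; the monotonicity of $U$ is then just the inequality $W = \langle U(x) - U(y), x-y\rangle \geq 0$.

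The step I expect to require real care is the one I glossed over: on the unbounded domain $\mathbb{O}_d^2$ the infimum of $W$ need not a priori be attained, whereas in Proposition \ref{uniq1} fixing the time and invoking the initial datum sidesteps this. This is precisely the role of Hypothesis \ref{grow} together with $T(B^1_R) \subset B^1_R$: it confines the dynamics of (\ref{sme}) to the ball $B^1_R$ (and, for a prescribed initial mass, to a fixed larger ball), which localizes the problem to a compact set on which $U$ and $V$ are bounded; alternatively one can keep working on $\mathbb{O}_d^2$ and perturb $W$ by an arbitrarily small linear functional to force an attained minimum — the weak form of Stegall's variational lemma recalled at the end of this section — and then send the perturbation to zero. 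Checking that no minimizing sequence for $W$ runs off to infinity is the only non-routine point; the rest is the literal transcription of Proposition \ref{uniq1}.
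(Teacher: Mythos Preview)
Your proposal is correct and follows essentially the same route as the paper: the paper derives the identical equation for $W$ and then invokes Lemma \ref{max2}, which is exactly your stationary maximum-principle argument on the compact set $B^1_R\times B^1_R$, using Hypothesis \ref{stab} on the faces $\{x^i=0\}$ and Hypothesis \ref{grow} on the cap $\{|x|_1=R\}$. The only thing to tidy is that your second paragraph treats only the faces $\{x^i=0\}$ explicitly; when you restrict to $B^1_R$ (as you do in the third paragraph) you should also note that at a minimum on $\{|x|_1=R\}$ one has $\sum_i\partial_{x^i}W\leq 0$ while $\sum_i F^i\geq 0$ by Hypothesis \ref{grow}, giving the same sign --- this is precisely the content of Lemma \ref{max2}, and there is no need for Stegall or any escape-to-infinity argument.
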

\begin{proof}
Let us take $U$ and $V$ two smooth solutions of (\ref{me}). We define $W$ by
\begin{equation}
W(x,y) = \langle U(x) - V(y) , x - y \rangle.
\end{equation}
This function satisfies
\begin{equation}
\begin{aligned}
r W + F(x,U)\cdot& \nabla_x W + F(y,V)\cdot \nabla_y W + \lambda(W - W(Tx,Ty))\\
 &= \langle G(x,U) - G(y,V), x-y\rangle + \langle F(x,U) - F(y,V), U - V \rangle \text{ in }\mathbb{O}_d^2.
\end{aligned}
\end{equation}
We then conclude as in the previous proof by using this time lemma \ref{max2} in appendix.
\end{proof}

\subsection{Existence results for the master equation in finite state space}
We now turn to the questions of existence of solutions of (\ref{me}) and (\ref{sme}). As the next section gives a precise definition of a solution of (\ref{me}) and (\ref{sme}), we do not focus on the sense in which solutions satisfy the PDE but rather on how we can obtain a priori estimates.

In the monotone setting (i.e. under hypothesis \ref{mon}), an a priori estimate on the spatial gradient of the solution can be proved, exactly as it is already the case in \citep{lions2007cours}.
\begin{Prop}\label{existme}
Under hypotheses \ref{stab}, \ref{mon} and \ref{smooth}, there exists a lipschitz function $U$, solution of (\ref{me}) almost everywhere, such that for any $t_f > 0$, there exists $C$ such that
\begin{equation}
\|D_x U(t,x)\| \leq C, \forall x \in \mathbb{O}_d, t \leq t_f.
\end{equation}
\end{Prop}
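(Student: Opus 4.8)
The plan is to obtain the solution of \eqref{me} as a limit of characteristics, following the classical strategy of \citep{lions2007cours}, with the main extra care devoted to the invariance of $\mathbb{O}_d$ under the forward flow and to the interplay with the common-noise term $\lambda(U - T^*U(t,Tx))$. First I would set up the system of characteristics \eqref{char} (augmented, when $\lambda>0$, by the nonlocal term, which can be handled by a fixed-point/Picard iteration in time since $\lambda$ only couples the value at $x$ with the value at $Tx$). Under hypothesis \ref{smooth} the fields $F,G$ are Lipschitz, so for each terminal point $y_0\in\mathbb{O}_d$ and each horizon $t_f$ the characteristic system has a unique local solution; hypothesis \ref{stab}, via the sign condition $x^i=0\Rightarrow F^i\le 0$ together with $T(\mathbb{O}_d)\subset\mathbb{O}_d$, guarantees that the forward trajectory $y(\cdot)$ stays in $\mathbb{O}_d$, so the flow is globally defined and $U(t,\cdot)$ is well defined on all of $\mathbb{O}_d$.

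The heart of the argument is the a priori Lipschitz bound on $D_xU$, and here I would reproduce the monotonicity computation used already in Proposition \ref{uniq1}. Introduce $W(t,x,y)=\langle U(t,x)-U(t,y),x-y\rangle$; it solves the same transported equation as in the proof of Proposition \ref{uniq1}, with right-hand side $\langle G(x,U)-G(y,U),x-y\rangle + \langle F(x,U)-F(y,U),U(x)-U(y)\rangle$, which is nonnegative by hypothesis \ref{mon}. The strengthened hypothesis \ref{smooth} (the lower bound $\langle\xi,D_xU_0\xi\rangle\ge\alpha|D_xU_0\xi|^2$ and the $\alpha$-monotonicity of the full Jacobian of $(G,F)$) upgrades this into a quantitative estimate: differentiating the characteristic ODEs in the terminal condition $y_0$ and testing against the propagated gradient, one controls $\|D_xU(t,x)\|$ by a Gr\"onwall inequality on $[0,t_f]$ whose constant depends only on $\alpha$, the Lipschitz constants, and $t_f$. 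Concretely, writing $Z(t)=D_{y_0}y(t)$ and differentiating, the $\alpha$ term provides a coercive contribution $\alpha|D_xU\cdot Z|^2$ that absorbs the quadratic-in-gradient terms coming from the nonlinearity $F(x,U)$, exactly as in \citep{lions2007cours}; the nonlocal term contributes $\lambda(D_xU(t,x) - T^* D_xU(t,Tx)T)$, which is linear in the gradient and, using $\|T\|$ bounded together with a Gr\"onwall argument over the Picard iteration, does not destroy the estimate.

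Once the a priori bound is in hand, existence follows by approximation: either regularize $F,G,U_0$ and pass to the limit using the uniform Lipschitz bound (Arzel\`a--Ascoli in $x$, plus equicontinuity in $t$ coming from the equation itself) together with the uniqueness statement of Proposition \ref{uniq1} to identify the limit, or run a continuation/fixed-point argument directly. Since $U$ is Lipschitz in $x$ uniformly on $[0,t_f]$ and the characteristics are Lipschitz, $U$ is differentiable almost everywhere and satisfies \eqref{me} a.e., which is the claimed conclusion.

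The step I expect to be the main obstacle is the a priori gradient estimate up to the boundary of $\mathbb{O}_d$: the characteristics may run along $\{x^i=0\}$, so one must check that the monotonicity computation for $W$ and the differentiation of the flow remain valid without any smoothness of $\partial\mathbb{O}_d$ being used, relying purely on the sign condition \eqref{stabF} and the maximum principle of Lemma \ref{max1}. The common-noise term is a secondary technical nuisance, handled by iterating in time, but it must be tracked carefully so that the constant $C$ in the estimate does not blow up as one patches the Picard iterates together.
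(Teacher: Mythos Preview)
Your outline is in the right spirit but misses the key technical device for the gradient bound. The function $W(t,x,y)=\langle U(t,x)-U(t,y),x-y\rangle$ being nonnegative only yields \emph{monotonicity} of $U(t,\cdot)$, not a Lipschitz bound; and ``differentiating the characteristics and Gr\"onwall'' does not close, because the linearized system for $D_xU$ along a characteristic is a matrix Riccati equation with a quadratic term $D_xU\cdot D_pF\cdot D_xU$ which can blow up in finite time. The role of hypothesis~\ref{smooth} is precisely to control this quadratic term, but not through a Gr\"onwall inequality: the paper (following \citep{lions2007cours}) introduces the scalar auxiliary function
\[
Z_{\beta,\gamma}(t,x,\xi)=\langle D_xU(t,x)\xi,\xi\rangle-\beta(t)\,|D_xU(t,x)\xi|^2+\gamma(t)|\xi|^2,
\]
computes the transport equation it satisfies, and shows that with $\gamma=0$ and a suitable exponentially decaying $\beta(t)$ the right-hand side is nonnegative thanks to the $\alpha$-coercivity in hypothesis~\ref{smooth}. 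The maximum principle of Lemma~\ref{max1} then gives $Z_{\beta,0}\ge 0$, i.e.\ $\langle D_xU\xi,\xi\rangle\ge\beta(t)|D_xU\xi|^2$, which is exactly the Lipschitz bound. The $-\beta|\nabla_xW|^2$ correction is the crucial ingredient you are missing; the common-noise term is handled directly inside this computation (it produces the factor $\lambda(\|T\|-1)_+$ in the exponent of $\beta$), not by a separate Picard iteration.

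For existence, smoothing the data $F,G,U_0$ alone does not produce classical solutions of the approximating problem on $\mathbb{O}_d$: the equation is still a first-order nonlinear system, and you have no solution to start from. The paper instead adds a carefully chosen \emph{degenerate} viscosity $-\epsilon\sum_j\sigma(x_j)\partial_{jj}U^i-\epsilon\sigma'(x_i)\partial_iU^i$ with $\sigma$ vanishing on $\partial\mathbb{O}_d$; this regularization preserves both the invariance of $\mathbb{O}_d$ and the monotonicity computation for $Z_{\beta,\gamma}$ (Bernstein method), so one obtains uniform-in-$\epsilon$ Lipschitz estimates and passes to the limit $\epsilon\to 0$.
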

\begin{proof}
This proposition can be obtained as a consequence of an a priori estimate on the solution $U$ of (\ref{me}). This idea is mostly borrowed from the lecture \citep{lions2007cours} in which this technique is presented. The only difference between this lecture and our situation is that we consider a master equation on $\mathbb{O}_d$, that is why we are not going to enter in a lot of details here but only indicate the main differences with the case in $\mathbb{R}^d$. For $U$ a smooth solution of (\ref{me}), let us define $W$ and $Z$ with
\begin{equation}
W(t,x,\xi) = \langle U(t,x), \xi \rangle,
\end{equation}
\begin{equation}
Z_{\beta,\gamma}(t,x,\xi) = \langle \nabla_x W(t,x,\xi), \xi \rangle - \beta(t) \|\nabla_x W(t,x,\xi)\|^2 + \gamma(t) |\xi|^2,
\end{equation}
for some functions $\beta$ and $\gamma$ to be defined later on. Arguing as in \citep{lions2007cours}, we deduce that $Z_{\beta,\gamma}$ satisfies
\begin{equation}\label{Zedp}
\begin{aligned}
& \partial_t Z_{\beta, \gamma} + \langle F(x,\nabla_{\xi} W), \nabla_x Z_{\beta, \gamma}\rangle + \langle D_p F (x,\nabla_{\xi} W ) \nabla_{\xi}Z_{\beta, \gamma} , \nabla_{x} W\rangle - \langle D_p G(x, \nabla_{\xi}W)\nabla_{\xi}Z_{\beta, \gamma}, \xi\rangle \\
&+  \lambda(Z_{\beta, \gamma} - Z_{\beta, \gamma}(t,Tx, T\xi ) )\\
& = \langle D_xG(x, \nabla_x W) \xi, \xi\rangle - \langle D_pG(x, \nabla_{\xi}W) \nabla_x W, \xi\rangle - \langle D_x F(x, \nabla_{\xi} W) \nabla_x W, \xi\rangle \\
&+ \langle D_p F(x, \nabla_{\xi}W) \nabla_x W, \nabla_x W\rangle\\
&- 2 \beta \langle D_x G(x, \nabla_x W)\xi, \nabla_x W\rangle + 2 \beta \langle D_x F(x, \nabla_{\xi} W) \nabla_x W, \nabla_x W\rangle\\
& + \beta \lambda \bigg{(} |\nabla_x W|^2 - 2 \langle \nabla_x W(t, Tx, T \xi ), S \nabla_x W\rangle +  | \nabla_x W(t,Tx, T \xi)|^2 \bigg{)} \\
&- \frac{d}{dt} \beta |\nabla_x W|^2  + 2 \gamma(\langle D_p F (x,\nabla_{\xi} W ) \xi , \nabla_{x} W\rangle - \langle D_p G(x, \nabla_{\xi}W)\xi, \xi\rangle)\\
& + \frac{d}{dt}\gamma |\xi|^2 + \lambda \gamma (|\xi|^2 - |T\xi - e|^2).
\end{aligned}
\end{equation}
Using the monotonicity of $U_0,G$ and $F$ and the strong monotonicity of $U_0$ and $G$, we deduce that by choosing $\gamma = 0$ and 
\begin{equation}
\beta(t) = \alpha e^{-t(2\|\nabla_x F\| + 2 \|\nabla_x G\| + \lambda(\|T\| -1)_+)},
\end{equation}
the right hand side of (\ref{Zedp}) is positive and thus that by lemma \ref{max1}, $Z_{\beta,\gamma}$ is positive for all time which yields the required a priori estimate on $U$.
Concerning the question of existence of a lipschitz function satisfying (\ref{me}), the main argument (as in \citep{lions2007cours}) is to remark that the previous technique to obtain a priori estimates still works when one add particular degenerate elliptic second order terms in (\ref{me}). This is a consequence of the so-called Bernstein method. We now indicate a particular choice of such terms.

Let us take $\epsilon > 0$ and consider $\sigma : \mathbb{R} \to \mathbb{R}$ a smooth real bounded function such that $\sigma (x) = x^2$ in a neighborhood of $0$. Assume that $U$ is a smooth solution of
\begin{equation}\label{regme}
\begin{aligned}
\partial_t U^i + F(x,U) \cdot \nabla_x &U^i - \epsilon \left(\sum_{j=1}^d \sigma(x_j)\partial_{jj}U^i\right) -  \epsilon \sigma'(x_i)\partial_i U^i\\
& + \lambda (U^i - (T^*)^iU(t,Tx)) = G^i(x,U) \text{ in } \mathbb{O}_d, \forall i \in \{1,...,d\},\\
U(0,x) = U_0(x) & \text{ in } \mathbb{O}_d.
\end{aligned}
\end{equation}
Let us remark that as the terms in $\epsilon$ in (\ref{regme}) preserves the monotonicity, we are able to adapt the previous technique to (\ref{regme}) to establish a, uniform in $\epsilon \in (0,1)$, a priori estimate on the spatial gradient of a solution of (\ref{regme}). Let us now remark that once we have this a priori estimate, from classical results on degenerate elliptic equations \citep{oleinik2012second}, existence of solutions of (\ref{regme}) can be easily obtained and that passing to the limit $\epsilon \to 0$ we deduce the existence of a lipschitz function, solution almost everywhere of (\ref{me}).
\end{proof}

We now provide an existence result for (\ref{sme}). As the proof of the following statement is very similar to the one of the previous result, we do not detail it here.
\begin{Prop}\label{existstat}
Under hypotheses \ref{stab}-\ref{strongstat}, any smooth function $U$ solution of (\ref{sme}) satisfies 
\begin{equation}
\forall x \in \mathbb{O}_d, \|D_xU(x)\| \leq C,
\end{equation}
for $C > 0$ depending only on $r,G,F,\lambda$ and $T$.
\end{Prop}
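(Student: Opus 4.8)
The plan is to transcribe the Bernstein-type a priori estimate from the proof of Proposition~\ref{existme} to the stationary equation (\ref{sme}), the discount term $rU$ playing the role that the time-decay of the weight $\beta(\cdot)$ played there. Let $U$ be a smooth solution of (\ref{sme}) and set, as before, $W(x,\xi):=\langle U(x),\xi\rangle$, so that, using $\nabla_\xi W=U(x)$, $W$ solves on $\mathbb{O}_d\times\mathbb{R}^d$ the scalar equation
\[
rW+\langle F(x,\nabla_\xi W),\nabla_x W\rangle+\lambda\big(W-W(Tx,T\xi)\big)=\langle G(x,\nabla_\xi W),\xi\rangle .
\]
For a constant $\beta>0$ to be fixed, put
\[
Z_\beta(x,\xi):=\langle\nabla_x W(x,\xi),\xi\rangle-\beta\,|\nabla_x W(x,\xi)|^2 .
\]
Since $W$ is linear in $\xi$, $Z_\beta(x,\cdot)$ is a quadratic form, and $Z_\beta\ge 0$ on $\mathbb{O}_d\times\mathbb{R}^d$ implies $\langle D_xU(x)\xi,\xi\rangle\ge\beta\,|D_xU(x)^\top\xi|^2$ for all $\xi$, whence $\|D_xU(x)\|\le 1/\beta$. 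Everything thus reduces to proving $Z_\beta\ge 0$ for a suitable $\beta=\beta(r,F,G,\lambda,T)$.

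As in the derivation of (\ref{Zedp}), one differentiates the $W$-equation in $x$, pairs with $\xi$, and adds $\beta$ times the analogous identity for $|\nabla_x W|^2$; this yields a first-order equation for $Z_\beta$ of the form
\[
rZ_\beta+\langle F(x,\nabla_\xi W),\nabla_x Z_\beta\rangle+\mathcal{T}_\xi Z_\beta+\lambda\big(Z_\beta-Z_\beta(Tx,T\xi)\big)=R_\beta ,
\]
where $\mathcal{T}_\xi$ is the same first-order operator in $\xi$ as in (\ref{Zedp}) and $R_\beta$ collects the zeroth-order terms, the essential point being that, the equation being stationary, the zeroth-order coefficient is exactly $r$ — in the time-dependent proof this slot was filled by $-\tfrac{d}{dt}\beta$. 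The $\beta$-independent part of $R_\beta$ is, as in (\ref{Zedp}), a quadratic form in $(\xi,\nabla_x W)$ which is $\ge\alpha|\xi|^2$ by the strong monotonicity postulated in hypothesis~\ref{smooth}. For the $\beta$-linear terms, the delicate one is $2\beta\langle D_xF\,\nabla_x W,\nabla_x W\rangle\ge 2\beta\|D_xF\|_{2,-}|\nabla_x W|^2$, which one wants to absorb against the $r\beta|\nabla_x W|^2$ produced by the discount — this is exactly where hypothesis~\ref{strongstat} enters — while the mixed $\beta$-terms and the $\lambda$-jump terms are dominated by $\alpha|\xi|^2$ and $r\beta|\nabla_x W|^2$ via Young's inequality; choosing $\beta>0$ small enough, depending only on $r,F,G,\lambda,T$, one concludes $R_\beta\ge 0$.

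With $R_\beta\ge 0$, hypothesis~\ref{stab} (so that $F$ points inward on $\partial\mathbb{O}_d$ and $T(\mathbb{O}_d)\subset\mathbb{O}_d$) and $r,\lambda>0$, the stationary maximum principle of lemma~\ref{max2} applies to $Z_\beta$ and gives $Z_\beta\ge 0$; the non-compactness of $\mathbb{O}_d\times\mathbb{R}^d$ is handled exactly as in that lemma through the weak form of Stegall's variational lemma recalled at the end of this section — one may also reduce beforehand to $\{|\xi|=1\}$ by homogeneity, so that the $\xi$-variable ranges over a compact set. This gives $Z_\beta\ge 0$ and hence $\|D_xU(x)\|\le 1/\beta=:C$, with $C$ depending only on $r,G,F,\lambda,T$. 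The argument is a line-by-line copy of the proof of Proposition~\ref{existme}; the only genuinely new — and, I expect, the only non-routine — point is the verification that $R_\beta\ge 0$ with a constant weight $\beta$, since here the favourable sign cannot come from $-\tfrac{d}{dt}\beta$ and must instead be extracted from the discount through hypothesis~\ref{strongstat}, which requires tracking all the terms of the stationary analogue of (\ref{Zedp}), the $\lambda$-jump terms in particular.
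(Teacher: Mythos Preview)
Your proposal is correct and is exactly the paper's approach: the paper omits the proof, noting only that it mirrors Proposition~\ref{existme} with the weights $\beta,\gamma$ now taken constant, the discount $r$ supplying (via hypothesis~\ref{strongstat}) the positivity that in the time-dependent case came from $-\tfrac{d}{dt}\beta$. One minor slip: lemma~\ref{max2} handles non-compactness through hypothesis~\ref{grow} (restriction to $B^1_R$) rather than Stegall's lemma, which the paper reserves for the monotone-solution framework in Section~\ref{sectionmon}.
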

\begin{Rem}\label{rembound}
Under the assumptions of the previous proposition, existence of a lipschitz solution of (\ref{sme}) is then easy to obtain once some local boundness can be established for a solution of (\ref{sme}). Such a property can usually be obtained easily on a case by case basis. We give such an example in the next section on optimal stopping.
\end{Rem}
\begin{Rem}
The main difference for the proof of this statement compare with the time dependent case is that the functions $\beta$ and $\gamma$ in the previous proof must be chosen constant.
\end{Rem}
\begin{Rem}
We firmly believe the restriction on $r$ from hypothesis \ref{strongstat} to be mainly technical and due to the rather abstract framework in which we are working. Let us for instance mention \citep{cardaliaguet2019long} in which a lipschitz estimate is proved for the solution of a first order stationary master equation for any discount rate.
\end{Rem}

\subsection{Stegall's variational principle}
We end this section on preliminary results by a recalling a quite weak version of Stegall's lemma \citep{stegall1978optimization,stegall1986optimization,fabian2007stegall} that we shall use many times in the rest of the paper. Moreover, we present a proof of this result that we believe to be new.
\begin{Lemma}\label{stegall}
Let $\phi: \Omega \to \mathbb{R}$ be a weakly sequentially lower semi continuous function from a compact set $\Omega \subset X$ of a separable Hilbert space $X$. Then there is a dense number of points $c$ in $X$ such that $x\to \phi(x) + \langle c,x\rangle$ has a strict global minimum on $\Omega$.
\end{Lemma}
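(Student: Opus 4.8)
The plan is to combine a Baire category argument with the Bishop--Phelps type observation that a weakly lower semicontinuous function on a weakly compact set attains its infimum. First I would note that since $\Omega$ is a norm-compact (hence weakly compact) subset of the separable Hilbert space $X$, for every $c \in X$ the perturbed functional $\phi_c := \phi + \langle c, \cdot\rangle$ is weakly sequentially lower semicontinuous on $\Omega$, and therefore attains its global minimum on $\Omega$; the set of minimizers is nonempty and weakly closed, hence weakly compact. The issue is uniqueness (strictness) of the minimizer for a dense set of $c$.

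For each $n \geq 1$, let me define
\begin{equation*}
G_n := \{ c \in X \ : \ \exists x_0 \in \Omega \text{ with } \phi_c(x) > \phi_c(x_0) \text{ for all } x \in \Omega \text{ with } |x - x_0| \geq 1/n \}.
\end{equation*}
A point $c$ belongs to $\bigcap_n G_n$ precisely when $\phi_c$ has a strict global minimum on $\Omega$ (any two minimizers would be at distance $0$). I would show each $G_n$ is open and dense in $X$; then Baire's theorem gives that $\bigcap_n G_n$ is dense, which is the claim. Openness of $G_n$: if $c \in G_n$ with minimizer $x_0$, the quantity $\delta := \inf\{\phi_c(x) - \phi_c(x_0) : x \in \Omega, |x-x_0|\geq 1/n\}$ is strictly positive (the infimum of a weakly l.s.c. function over the weakly compact set $\{x \in \Omega : |x-x_0|\geq 1/n\}$ is attained and hence positive, using that $\{|x-x_0| \geq 1/n\}$ is norm-closed, hence weakly closed by convexity considerations — here one uses that $\Omega$ is norm-compact so norm-closed subsets are weakly compact); then for $\|c' - c\|$ small, $|\langle c'-c, x - x_0\rangle| \leq \|c'-c\|\,\mathrm{diam}(\Omega)$ is less than $\delta/2$ uniformly on $\Omega$, so $x_0$ still beats every point at distance $\geq 1/n$, i.e. $c' \in G_n$.

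The main obstacle — and the step I would spend the most care on — is density of $G_n$. Fix $c \in X$ and $\varepsilon > 0$; I must produce $c'$ with $\|c' - c\| < \varepsilon$ and $c' \in G_n$. Replacing $\phi$ by $\phi_c$ I may assume $c = 0$. Let $x_0$ be any minimizer of $\phi$ on $\Omega$ and set $c' := -\varepsilon'(x - x_0)$-type linear functional; more precisely consider $\phi_{c'}(x) = \phi(x) - \varepsilon' \langle y_0, x\rangle$ for a suitable small $\varepsilon'$ and direction. The cleanest route is: among all minimizers of $\phi$, there may be many, but perturbing by $-\varepsilon' \langle x_0, \cdot \rangle$ with $x_0$ fixed strictly penalizes points far from $x_0$ only if $\Omega$ has enough convexity; since $\Omega$ need not be convex, instead I would argue by a small transfinite/iterated perturbation or, more simply, invoke that the set of minimizers $M_0$ of $\phi$ is weakly compact and pick $x_1 \in M_0$; then for small $\varepsilon' > 0$, $c' = -\varepsilon' x_1/\|x_1\|$ (or $c'=0$ if the minimizer set is a singleton) makes $\phi_{c'}$ attain its min on a smaller weakly compact set, and iterate countably many times with summable perturbations $\sum \varepsilon'_k < \varepsilon$ to shrink the minimizer set to something of diameter $< 1/n$. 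Convergence of this scheme and the fact that the total perturbation stays $< \varepsilon$ give a $c'$ with $\mathrm{diam}$ of the minimizer set of $\phi_{c'}$ at most $1/n$, hence $c' \in G_n$. This establishes density, completes the Baire argument, and proves the lemma.
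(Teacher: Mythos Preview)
Your Baire category framework (define the $G_n$, show each is open and dense, intersect) is a legitimate route to this lemma, and it is genuinely different from the paper's argument. The paper proceeds via convex analysis: it observes that the argmin multimap $c \mapsto A(c) := \operatorname{argmin}_{\Omega}(\phi + \langle c,\cdot\rangle)$ has $-A$ cyclically monotone, hence $A$ is contained in the superdifferential of some concave function $\psi$ on $X$; generic (Alexandrov/G\^ateaux) differentiability of concave functions on a separable Hilbert space then forces $A(c)$ to be a singleton for a dense set of $c$, which is precisely strictness of the minimum. This completely bypasses the density step for the $G_n$.

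In your proposal, openness of $G_n$ is fine (and since $\Omega$ is norm-compact the weak-closedness issue you flag is harmless). The gap is the density argument. The iterative shrinking scheme you sketch is not justified: after perturbing to $c' = -\varepsilon' x_1/\|x_1\|$, the minimizer set of $\phi_{c'}$ over $\Omega$ need not be contained in, nor close to, the old one $M_0$; the minimum can jump to points of $\Omega\setminus M_0$ where $\langle x_1,\cdot\rangle$ is large, and there is no reason the new argmin set has smaller diameter than $M_0$. The sentence ``makes $\phi_{c'}$ attain its min on a smaller weakly compact set'' is exactly the assertion that needs proof, and without it the subsequent iteration with summable $\varepsilon'_k$ does not control diameters. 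Known proofs of density at this step use either dentability/slicing (Stegall's original RNP argument) or, as the paper does, monotone-operator/convex-analysis machinery; the naive perturb-and-shrink iteration you describe does not close the argument.
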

\begin{proof}
This proof relies on convex analysis. Let us consider the operator $A : X \to \mathcal{P}(X)$ defined by $A(c)$ is the set of the points at which $x\to \phi(x) + \langle c,x\rangle$ reaches its minimum over $\Omega$. Clearly for all $c$, this set is non empty and well defined. Let us check that the operator $-A$ is cyclically monotone. We consider a finite sequence $c_0,c_2,...,c_n = c_0$ and for all $i$, $y_i \in A(c_i)$. For all $i$, let us remark that
\begin{equation}
\phi(y_i) + \langle c_i, y_i\rangle \leq \phi(y_{i+1}) + \langle c_i, y_{i+1} \rangle.
\end{equation}
Rearranging we get
\begin{equation}
  \langle c_i, y_i - y_{i+1}\rangle \leq \phi(y_{i+1}) - \phi(y_i).
\end{equation}
Let us now compute
\begin{equation*}
\begin{aligned}
\sum_{i =1}^n \langle c_i - c_{i-1}, y_i \rangle & = \sum_{i =1}^n \langle c_i,  y_i - y_{i+1} \rangle,\\
& \leq \sum_{i = 1}^n \phi(y_{i+1}) - \phi(y_i),\\
& \leq 0.
\end{aligned}
\end{equation*}
Therefore the operator $-A$ is cyclically monotone. Thus we deduce that $A \subset \partial \psi$ where $\psi$ is a concave function on $X$ and $\partial \psi$ is its super-differential. From a generalization of Alexandrov theorem for separable Hilbert spaces \citep{borwein1994second}, we finally deduce the required result.
\end{proof}

\section{Monotone solutions of the master equation}\label{sectionmon}
In this section we provide a notion of solution for the master equation which does not require the solution to be differentiable with respect to the space variable. We define first our solutions in the easier case of the stationary master equation and then present the time dependent case.

\subsection{The stationary case}
The main idea we exploit in this section is somehow contained in the proof of propositions \ref{uniq2}. Namely let us remark that for the proof of proposition \ref{uniq2} to hold, we only need to have information on the solution $U$ of (\ref{sme}) at points of minima of $\phi_{V,y} : x \to \langle U(x) - V, x - y\rangle$ for $V \in \mathbb{R}^d, y \in \mathbb{O}_d$. Let us now remark that if $U$ is smooth, so is $\phi_{V,y}$ and 
\begin{equation}
\nabla_x \phi_{V,y}(x) = U(x) - V + D_x U(x)\cdot(x - y).
\end{equation}
In particular, if $x_0$ is a point of minimum of $\phi_{V,y}$ in the interior of $\mathbb{O}_d$, then 
\begin{equation}\label{eq1}
 D_x U(x_0)\cdot(x_0 - y) =V - U(x_0).
\end{equation}
The right hand side of (\ref{eq1}) does not depend on derivatives of $U$. This leads us to understand how we can generalize the notion of solution of (\ref{sme}) for a function $U$ which is not differentiable. Accordingly to this heuristic, we introduce the following definition.
\begin{Def}\label{defmons}
A function $U \in \mathcal{C}(\mathbb{O}_d, \mathbb{R}^d)$ is said to be a monotone solution of (\ref{sme}) if for any $V \in \mathbb{R}^d, y \in \mathbb{O}_d$, $R>0$ sufficiently large and $x_0$ a point of strict minimum of  $\phi_{V,y} : x \to \langle U(x) - V, x - y\rangle$ in $B^1_R$, the following holds
\begin{equation}\label{smon}
\begin{aligned}
r\langle U(x_0), x_0 - y\rangle &+ \lambda \langle U(x_0) - T^*U(Tx_0),x_0 - y\rangle \geq\\
& \langle G(x_0, U(x_0)), x_0 - y\rangle + \langle F(x_0,U(x_0)), U(x_0) - V\rangle.
\end{aligned}
\end{equation}
\end{Def}
\begin{Rem}
Let us remark that this notion of solution is reminiscent of the definition of viscosity solution introduced by Crandall and Lions \citep{crandall1983viscosity}. We feel that it is useful to remark that, for the master equation (\ref{sme}), we could have defined a weak solution by the fact that the function $\phi_{V,y}$ was a viscosity super-solution of a certain PDE for all $V \in \mathbb{R}^d, y \in \mathbb{O}_d$. However, such a formalism would not have allow us to define solution of the master equation for the cases of optimal stopping or impulse control, that is why we prefer the definition of solutions we just presented.
\end{Rem}

\begin{Rem}
Let us remark that this notion of solutions could be stated in more general domains than $\mathbb{O}_d$, as long as we have a boundary condition of the type of hypothesis \ref{stab}.
\end{Rem}
Let us insist that in the previous definition $x_0$ may be on the boundary of $\mathbb{O}_d$. We introduce the ball $B^1_R$ because we shall place ourselves under hypothesis \ref{grow}, which, as we already mentioned, has the effect to bound the trajectories. Before commenting our definition of solution, let us state the two following results which justify this choice of definition.
\begin{Prop}
Under hypotheses \ref{stab} and \ref{grow}, a smooth solution of (\ref{sme}) is also a monotone solution of (\ref{sme}) in the sense of definition \ref{defmons}. 
\end{Prop}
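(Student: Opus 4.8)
The plan is to read off inequality (\ref{smon}) directly from the PDE (\ref{sme}) evaluated at $x_0$, after taking the scalar product with $x_0 - y$ and controlling the single term involving derivatives of $U$ by the first-order optimality condition at the constrained minimum $x_0$. So I would fix a smooth solution $U$ of (\ref{sme}), points $V \in \mathbb{R}^d$ and $y \in \mathbb{O}_d$, and $R$ at least as large as the radius furnished by hypothesis \ref{grow} (this is what ``$R$ sufficiently large'' will mean here), and let $x_0$ be a strict minimum of $\phi_{V,y}$ on $B^1_R$. Since $U$ is smooth, so is $\phi_{V,y}$, and as recalled just before (\ref{eq1}) one has $\nabla_x \phi_{V,y}(x_0) = U(x_0) - V + D_xU(x_0)\cdot(x_0 - y)$.

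The heart of the argument is the estimate $\langle F(x_0, U(x_0)), \nabla_x \phi_{V,y}(x_0)\rangle \leq 0$. To obtain it, I would use that $x_0$ minimizes $\phi_{V,y}$ over the convex set $B^1_R$, so that $-\nabla_x\phi_{V,y}(x_0)$ belongs to the normal cone of $B^1_R$ at $x_0$, equivalently $\langle \nabla_x \phi_{V,y}(x_0), w\rangle \geq 0$ for every $w$ in the tangent cone of $B^1_R$ at $x_0$ — namely every $w$ with $w_i \geq 0$ for each index $i$ such that $x_0^i = 0$, together with $\sum_i w_i \leq 0$ in case $|x_0|_1 = R$. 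It then suffices to check that $-F(x_0, U(x_0))$ lies in this cone: hypothesis \ref{stab} gives $F^i(x_0, U(x_0)) \leq 0$, i.e. $-F^i(x_0,U(x_0)) \geq 0$, whenever $x_0^i = 0$, and hypothesis \ref{grow} (applicable because $|x_0|_1 = R$ is at least the threshold radius) gives $\sum_i F^i(x_0, U(x_0)) \geq 0$, i.e. $\sum_i(-F^i(x_0,U(x_0))) \leq 0$, when $|x_0|_1 = R$. Hence $\langle \nabla_x \phi_{V,y}(x_0), -F(x_0,U(x_0))\rangle \geq 0$, which is the claimed inequality.

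To conclude, I would take the scalar product of (\ref{sme}) at $x_0$ with $x_0 - y$. By the very definition of the notation $D_xU(x_0)\cdot(x_0-y)$ the transport term rewrites as
\[
\langle (F(x_0,U(x_0))\cdot\nabla_x)U(x_0), x_0 - y\rangle = \langle F(x_0,U(x_0)), D_xU(x_0)\cdot(x_0-y)\rangle = \langle F(x_0,U(x_0)), \nabla_x\phi_{V,y}(x_0)\rangle - \langle F(x_0,U(x_0)), U(x_0) - V\rangle.
\]
Substituting this into (\ref{sme}) and moving the term $\langle F(x_0,U(x_0)), \nabla_x\phi_{V,y}(x_0)\rangle$, which is nonpositive, to the left-hand side yields exactly (\ref{smon}). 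Note that when $x_0$ is interior to $B^1_R$ that term vanishes, so one recovers (\ref{smon}) with equality, consistently with (\ref{eq1}).

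The computations are routine; the only genuine point requiring care is the boundary bookkeeping of the second paragraph — correctly describing the tangent cone of $B^1_R$ at a point that may simultaneously lie on several faces $\{x^i = 0\}$ and on $\{|x|_1 = R\}$, and matching each of its defining inequalities to the appropriate sign hypothesis on $F$ (hypothesis \ref{stab} for the faces $\{x^i=0\}$, hypothesis \ref{grow} for the face $\{|x|_1 = R\}$). Everything else is just taking the scalar product of the equation with $x_0 - y$.
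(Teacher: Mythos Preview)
Your proof is correct and follows exactly the route the paper has in mind; the paper only sketches it (interior case: equality via (\ref{eq1}); boundary case: inequality from hypotheses \ref{stab} and \ref{grow}), and you have spelled out the boundary case cleanly via the tangent-cone/first-order optimality argument. No gaps.
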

\begin{proof}
This result is fairly simple so we only sketch its proof here. Let us consider a classical solution $U$ of (\ref{sme}). If the point of strict minimum of $x \to \langle U(x) - V, x - y \rangle$ is in the interior of $B^1_R$, then thanks to (\ref{eq1}), there is equality in (\ref{smon}). If this point is on the boundary, thanks to the assumptions \ref{stab} and \ref{grow}, the inequality holds.

\end{proof}
\begin{Theorem}\label{uniqstat}
Under hypotheses \ref{stab}-\ref{mon}, there exists at most one continuous monotone solution of (\ref{sme}) in the sense of definition \ref{defmons}. If it exists it is a monotone application.
\end{Theorem}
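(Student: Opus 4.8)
The plan is to mimic the proof of Proposition \ref{uniq2}, but replacing the classical maximum principle argument with a direct test against the monotone-solution inequality \eqref{smon}. Let $U$ and $V$ be two continuous monotone solutions of \eqref{sme}. The key observation is that the definition of a monotone solution gives us information precisely at points of strict minimum of maps of the form $x \mapsto \langle U(x) - p, x - y\rangle$, so we want to combine the two inequalities (one for $U$, one for $V$) at a common minimizing configuration. To do this I would fix $R > 0$ large enough (so that hypotheses \ref{grow} and the "sufficiently large $R$" clause of Definition \ref{defmons} apply for both $U$ and $V$) and study the function
\begin{equation*}
\Phi(x,y) = \langle U(x) - V(y), x - y\rangle
\end{equation*}
on $B^1_R \times B^1_R$. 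By hypothesis \ref{mon}, $U_0$ is not directly relevant here, but the monotonicity of $(G,F)$ will make the cross terms have a favorable sign, exactly as in the computation displayed in the proof of Proposition \ref{uniq2}.

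The main technical step is that $\Phi$ need not attain a strict minimum, and even if it attains a minimum the partial maps $x \mapsto \Phi(x,\bar y)$ and $y \mapsto \Phi(\bar x, y)$ need not have strict minima — but Definition \ref{defmons} requires strictness. This is where Stegall's variational principle (Lemma \ref{stegall}) enters: for $c, c' \in \mathbb{R}^d$ small, consider
\begin{equation*}
\Phi_{c,c'}(x,y) = \langle U(x) - V(y), x - y\rangle + \langle c, x\rangle + \langle c', y\rangle,
\end{equation*}
which is continuous, hence weakly sequentially lower semicontinuous, on the compact convex set $B^1_R \times B^1_R$ of the Hilbert space $\mathbb{R}^d \times \mathbb{R}^d$. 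By Lemma \ref{stegall} there is a dense set of $(c,c')$ for which $\Phi_{c,c'}$ has a strict global minimum at some $(\bar x, \bar y)$. For such $(c,c')$, the map $x \mapsto \langle U(x) - (V(\bar y) - c), x - \bar y\rangle$ has a strict minimum at $\bar x$ in $B^1_R$ (absorbing $\langle c', \bar y\rangle$ as a constant and rewriting $\langle c,x\rangle = \langle c, x - \bar y\rangle + \text{const}$ — here one must be slightly careful, writing $\langle c, x\rangle$ as $\langle U(x) - (V(\bar y) - c'')\, , x - \bar y\rangle$ for an appropriate shift $c''$ depending on $c$ and $\bar y$; the point is that adding $\langle c, x\rangle$ is the same as replacing $V(\bar y)$ by $V(\bar y) - c$ up to an additive constant), so Definition \ref{defmons} applies with $y = \bar y$, $p = V(\bar y) - c$ at the point $\bar x$. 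Symmetrically, $y \mapsto \langle V(y) - (U(\bar x) + c), y - \bar x\rangle$ has a strict maximum — i.e. $y \mapsto \langle U(\bar x) - V(y), \bar x - y \rangle$ rewritten appropriately has a strict minimum — at $\bar y$, so the monotone-solution inequality for $V$ applies there too.

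Adding the two inequalities obtained from \eqref{smon} for $U$ at $(\bar x, \bar y, V(\bar y)-c)$ and for $V$ at $(\bar y, \bar x, U(\bar x)+c)$, and letting $c, c' \to 0$ along the dense set (using continuity of $U$, $V$ and a compactness argument to extract a limiting pair $(\bar x, \bar y) \in B^1_R^2$ which is a, not necessarily strict, global minimum of $\Phi$), I expect to arrive at
\begin{equation*}
r\,\Phi(\bar x,\bar y) + \lambda\big(\Phi(\bar x,\bar y) - \Phi(T\bar x, T\bar y)\big) \geq \langle G(\bar x,U(\bar x)) - G(\bar y,V(\bar y)), \bar x - \bar y\rangle + \langle F(\bar x,U(\bar x)) - F(\bar y,V(\bar y)), U(\bar x) - V(\bar y)\rangle \geq 0,
\end{equation*}
the last inequality by monotonicity of $(G,F)$ (hypothesis \ref{mon}). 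Since $(\bar x,\bar y)$ minimizes $\Phi$ over $B^1_R^2$ and $(T\bar x, T\bar y) \in B^1_R^2$ by hypothesis \ref{grow}, the term $\Phi(\bar x,\bar y) - \Phi(T\bar x,T\bar y) \leq 0$, so $r\,\Phi(\bar x,\bar y) \geq r\,\Phi(\bar x,\bar y) + \lambda(\Phi(\bar x,\bar y)-\Phi(T\bar x,T\bar y)) \geq 0$ wait — more carefully, from $r\Phi(\bar x,\bar y) + \lambda(\Phi(\bar x,\bar y) - \Phi(T\bar x,T\bar y)) \geq 0$ and $\Phi(T\bar x,T\bar y) \geq \Phi(\bar x,\bar y)$ we get $r\Phi(\bar x,\bar y) \geq \lambda(\Phi(T\bar x,T\bar y) - \Phi(\bar x,\bar y)) \geq 0$, hence $\Phi(\bar x,\bar y) \geq 0$ since $r > 0$. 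Therefore $\Phi \geq 0$ on $B^1_R^2$ for every large $R$, i.e. $\langle U(x) - V(y), x-y\rangle \geq 0$ for all $x,y \in \mathbb{O}_d$. Taking $x = y$ forces... nothing, but taking $U=V$ in the roles, i.e. applying this with the pair $(U,U)$, gives monotonicity of $U$; and applying the symmetric bound $\langle V(x) - U(y), x - y\rangle \geq 0$ (by exchanging the roles of $U$ and $V$) and specializing $x = y$ in the sum $\langle U(x)-V(y),x-y\rangle + \langle V(y)-U(x), y-x\rangle$... the clean way: from $\langle U(x)-V(y),x-y\rangle\geq 0$ and $\langle U(y)-V(x),y-x\rangle \geq 0$ for all $x,y$, adding gives $\langle (U-V)(x) - (U-V)(y), x-y\rangle \geq 0$, i.e. $U-V$ is monotone; combined with... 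Actually uniqueness follows by a separate short argument: set $x=y$ is useless, so instead use that $\langle U(x) - V(x), x - y\rangle \geq 0$ for \emph{all} $y \in \mathbb{O}_d$ (fixing $x$ interior), which forces $U(x) = V(x)$ since $x - y$ ranges over a neighborhood of $0$ in $\mathbb{R}^d$; then extend to the boundary by continuity. The main obstacle I anticipate is the bookkeeping in the Stegall step — ensuring that perturbing $\Phi$ by a bilinear-friendly linear term genuinely produces the strict minima in the \emph{separate} variables required by Definition \ref{defmons}, and that the limit $c,c'\to 0$ can be taken while keeping the minimizing configuration inside a fixed $B^1_R^2$; the rest is the routine monotonicity computation already carried out in Proposition \ref{uniq2}.
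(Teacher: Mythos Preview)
Your approach is essentially the same as the paper's: perturb $\Phi(x,y)=\langle U(x)-V(y),x-y\rangle$ by linear terms via Stegall's lemma to force a strict minimum on $(B^1_R)^2$, apply the defining inequality \eqref{smon} for $U$ and for $V$ at the resulting point, sum, and use the monotonicity of $(G,F)$. The only cosmetic difference is that the paper does not extract a limiting minimizer but works directly with the perturbed minimum $(x_0,y_0)$, obtaining $rW(x_0,y_0)\geq \lambda\langle a,x_0-Tx_0\rangle+\lambda\langle b,y_0-Ty_0\rangle+\langle F(x_0,U(x_0)),a\rangle+\langle F(y_0,V(y_0)),b\rangle$ and then letting $|a|,|b|\to 0$; this spares you the compactness/continuity bookkeeping you flag at the end, and also note that the correct test value for $V$ is $U(\bar x)-c'$ (strict \emph{minimum}, not maximum), not $U(\bar x)+c$.
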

\begin{proof}
Let us consider $U$ and $V$ two such solutions. Let us define $W: \mathbb{O}_d^2 \to \mathbb{R}$ by 
\begin{equation}
W(x,y) = \langle U(x) - U(y), x - y \rangle.
\end{equation}
Thanks to lemma \ref{stegall}, for any $\epsilon > 0$, there exists $(a,b)\in \mathbb{R}^{2d}, |a| \leq \epsilon$, such that $(x,y) \to \langle U(x) - V(y) , x - y \rangle + \langle a,x\rangle + \langle b, y\rangle $ has a strict minimum on $(B_R^1)^2$ (for $R> 0$ chosen sufficiently large chosen independently of $\epsilon$), attained at  $(x_0,y_0)$. Thus because $U$ is a monotone solution the following holds.
\begin{equation}
\begin{aligned}
r\langle U(x_0), x_0 - y_0\rangle &+ \lambda\langle U(x_0) - T^*U(Tx_0), x_0 - y_0\rangle \geq\\
 &\langle G(x_0, U(x_0)), x_0 - y_0\rangle + \langle F(x_0,U(x_0)), U(x_0) - V(y_0)  + a\rangle.
\end{aligned}
\end{equation}
On the other hand, because $V$ is a monotone solution, we deduce that
\begin{equation}
\begin{aligned}
r\langle V(y_0), y_0 - x_0\rangle &+ \lambda \langle V(y_0) -T^*V(Ty_0), y_0 - x_0\rangle \geq\\
& \langle G(y_0, V(y_0)), y_0 - x_0\rangle + \langle F(y_0,V(y_0)), V(y_0) - U(x_0)  + b\rangle.
\end{aligned}
\end{equation}
Summing the two previous equations, we obtain
\begin{equation}
\begin{aligned}
rW(x_0,y_0) &+ \lambda (W(x_0,y_0) - W(Tx_0,Ty_0))\geq\\
& \langle G(x_0,U(x_0)) - G(y_0,V(y_0)),x_0 - y_0 \rangle+ \langle F(x_0,U(x_0)),a\rangle + \langle F(y_0,V(y_0)),b\rangle+\\
&+ \langle F(x_0,U(x_0)) - F(y_0,V(y_0)),U(x_0) - V(y_0) \rangle.
\end{aligned}
\end{equation}
From this we deduce the following.
\begin{equation}
\begin{aligned}
rW(x_0,y_0) \geq & \lambda ( \langle a, x_0 - Tx_0\rangle + \langle b, y_0 - Ty_0\rangle ) + \langle F(x_0,U(x_0)),a\rangle + \langle F(y_0,V(y_0)),b\rangle.
\end{aligned}
\end{equation}
Because $U$ and $V$ are continuous, we deduce from the fact that $\epsilon$ can be chosen arbitrary small, that for any $R> 0$, for all $\eta > 0$, $W \geq - \eta$ on $B^1_R$. Thus we conclude as in the proof of proposition \ref{uniq2} that $U = V$ and that $U$ is monotone.
\end{proof}

This previous result is a strong justification for our notion of solutions. By considering the proof of this result and the equivalent result in the smooth regime, one can realize that we have simply use all the ingredients useful to prove uniqueness of solutions of the master equation in the monotone regime and use it has a definition of solutions. One can wonder if this notion is not too weak. In the next section we show how it can be sufficient to describe solutions in the optimal stopping case or in the impulse control one. Moreover we now present results of stability and consistency concerning monotone solutions.

\begin{Prop}\label{stabs}
Consider a sequence $(F_n,G_n)_{n \in \mathbb{N}}$ of applications from $\mathbb{O}_d\times \mathbb{R}^d$ into $\mathbb{R}^{2d}$ which converges uniformly over all compact toward $(F,G)$. If for all $n$, $U_n$ is a continuous monotone solution of the master equation (\ref{sme}) associated to $F_n$ and $G_n$ and if $(U_n)_{n \in \mathbb{N}}$ converges uniformly toward $U$, then $U$ is a monotone solution of the master equation associated to $F$ and $G$.
\end{Prop}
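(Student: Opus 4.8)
The plan is to run the standard stability argument familiar from viscosity-type solutions, the only real twist being that the inequality in Definition~\ref{defmons} is only tested at \emph{strict} minima, which forces us to perturb linearly via Stegall's lemma before passing to the limit. So fix $V\in\mathbb{R}^d$, $y\in\mathbb{O}_d$, and $R>0$ sufficiently large (and such that $T(B^1_R)\subseteq B^1_R$), and let $x_0\in B^1_R$ be a point of strict minimum of $\phi_{V,y}(x)=\langle U(x)-V,x-y\rangle$. Since $U$, being a uniform limit of continuous maps, lies in $\mathcal{C}(\mathbb{O}_d,\mathbb{R}^d)$, it is a legitimate candidate, and we must establish \eqref{smon} at $x_0$ with data $(F,G)$.

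For each $n$, set $\phi^n_{V,y}(x)=\langle U_n(x)-V,x-y\rangle$; this is continuous on the compact set $B^1_R\subset\mathbb{R}^d$, hence weakly sequentially lower semicontinuous, so Lemma~\ref{stegall} furnishes $c_n\in\mathbb{R}^d$ with $|c_n|\le 1/n$ for which $x\mapsto\phi^n_{V,y}(x)+\langle c_n,x\rangle$ has a strict global minimum on $B^1_R$, attained at some point $x_n$. Since $\phi^n_{V,y}(x)+\langle c_n,x\rangle=\langle U_n(x)-(V-c_n),x-y\rangle+\langle c_n,y\rangle$ and the last term does not depend on $x$, the point $x_n$ is a point of strict minimum of $x\mapsto\langle U_n(x)-(V-c_n),x-y\rangle$ on $B^1_R$. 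Because $U_n$ is a monotone solution of \eqref{sme} for $(F_n,G_n)$, Definition~\ref{defmons} applied with $V$ replaced by $V-c_n$ gives
\[
\begin{aligned}
r\langle U_n(x_n),x_n-y\rangle+\lambda\langle U_n(x_n)-T^*U_n(Tx_n),x_n-y\rangle\ &\ge\ \langle G_n(x_n,U_n(x_n)),x_n-y\rangle\\
&\quad+\langle F_n(x_n,U_n(x_n)),U_n(x_n)-V+c_n\rangle .
\end{aligned}
\]

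It remains to let $n\to\infty$. On $B^1_R$ one has $\phi^n_{V,y}\to\phi_{V,y}$ uniformly, since $\phi^n_{V,y}(x)-\phi_{V,y}(x)=\langle U_n(x)-U(x),x-y\rangle$ with $x\mapsto|x-y|$ bounded on $B^1_R$ and $U_n\to U$ uniformly; together with $|c_n|\to0$, also $x\mapsto\phi^n_{V,y}(x)+\langle c_n,x\rangle\to\phi_{V,y}$ uniformly on $B^1_R$. Hence any subsequential limit $\bar x$ of $(x_n)$ (there is one by compactness) satisfies $\phi_{V,y}(\bar x)\le\phi_{V,y}(x_0)$, so $\bar x=x_0$ by strict minimality, and therefore $x_n\to x_0$. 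Consequently $U_n(x_n)\to U(x_0)$ and, since $Tx_n\to Tx_0$, also $U_n(Tx_n)\to U(Tx_0)$; moreover the pairs $(x_n,U_n(x_n))$ lie in a fixed compact subset $K$ of $\mathbb{O}_d\times\mathbb{R}^d$ (as $B^1_R$ is compact and $U_n\to U$ uniformly), on which $(F_n,G_n)\to(F,G)$ uniformly, so by continuity of $F$ and $G$ we get $F_n(x_n,U_n(x_n))\to F(x_0,U(x_0))$ and $G_n(x_n,U_n(x_n))\to G(x_0,U(x_0))$. Passing to the limit in the displayed inequality (and using $c_n\to0$) yields exactly \eqref{smon} for $U$ at $x_0$ with data $(F,G)$. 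Since $V$, $y$ and the strict minimizer $x_0$ were arbitrary (and $R$ arbitrarily large), $U$ is a monotone solution of the master equation associated with $F$ and $G$.

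The only genuine obstacle is the one just handled: monotone solutions only ``see'' strict minima, so one cannot merely pass to the limit in the minimizers of the functionals attached to $U_n$; the remedy is to perturb each of them by a small linear form via Stegall, small enough that the perturbed minimizers still converge to $x_0$. Everything else — uniform convergence of $\phi^n_{V,y}$, stability of minimizers under uniform convergence toward a unique limiting minimizer, and the locally uniform convergence of $(F_n,G_n)$ evaluated along a relatively compact convergent sequence — is routine.
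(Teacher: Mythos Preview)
Your proof is correct and follows essentially the same route as the paper: use Lemma~\ref{stegall} to perturb $\langle U_n(x)-V,x-y\rangle$ by a small linear form so that it has a \emph{strict} minimum $x_n$ on $B^1_R$, apply Definition~\ref{defmons} to $U_n$ with $V$ replaced by $V-c_n$, show $x_n\to x_0$ via the strict minimality of $x_0$ for $\phi_{V,y}$, and pass to the limit. Your presentation is in fact somewhat more careful than the paper's (you spell out the compactness and uniform-convergence details needed to pass to the limit in the $F_n,G_n$ terms), but the idea and the structure of the argument are the same.
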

\begin{proof}
Let us consider $V \in \mathbb{R}^d, y \in \mathbb{O}_d, R>0$ and $x_0$ a point of strict minimum of $\phi : x \to \langle U(x) - V, x - y\rangle$ in $B^1_R$. From lemma \ref{stegall}, we can consider a sequence $(a_n)_{n \in \mathbb{N}} \in (\mathbb{R}^d)^{\mathbb{N}}$ such that for all $n$, 
\begin{equation}
\begin{cases}
|a_n| \leq n^{-1};\\
\phi_n : x \to \langle U_n(x) - V, x - y\rangle + \langle a_n,x\rangle \text{ has a strict minimum $x_n$ in }B^1_R.
\end{cases}
\end{equation}
Let us now remark that for all $n \geq 0$ :
\begin{equation}
\langle U_n(x_n)- V, x_n - y\rangle + \langle a_n, x_n\rangle \leq \langle U_n(x_0) - V, x_0 - y\rangle + \langle a_n,x_0\rangle.
\end{equation}
From this last inequality, we deduce that $(x_n)_{n \in \mathbb{N}}$ converges toward $x_0$. Finally let us remark that because for all $n \geq 0$, $U_n$ is a monotone solution, we can write
\begin{equation}
\begin{aligned}
r\langle U_n(x_n), x_n - y\rangle &+ \lambda \langle U_n(x_n) - T^*U_n(Tx_n),x_n - y \rangle \geq\\
& \langle G_n(x_n, U_n(x_n)), x_n - y\rangle + \langle F_n(x_n,U_n(x_n)), U_n(x_n) - V  + a_n\rangle.
\end{aligned}
\end{equation}
Passing to the limit in this last expression yields the required result.
\end{proof}
\begin{Rem}
The same type of results can be obtained in the case in which one seeks stability for the terms $\lambda$ and $T$ in (\ref{sme}). This can be done by changing mildly the previous proof.
\end{Rem}
We now show consistency of this notion of solution under an additional monotonicity assumption. That is we show that if a smooth function $U$ is a monotone solution of (\ref{sme}) in the sense of definition \ref{defmons} and that it satisfies an additional monotonicity assumption, then it is a classical solution of (\ref{sme}).
\begin{Prop}\label{consist}
Assume that $U\in W^{2,\infty}$ is a monotone solution of (\ref{sme}) in the sense of definition \ref{defmons}. Assume furthermore that for all $x \in \mathbb{O}_d$, $D_x U(x) > 0$ in the order of positive definite matrix. Then $U$ satisfies 
\begin{equation}
r U^i + (F(x,U)\cdot \nabla_x)U^i + \lambda \left(U^i - (T^*U(t,Tx))^i\right)= G^i(x,U) \text{ in } \{x_i > 0\};
\end{equation}
\begin{equation}
r U^i + (F(x,U)\cdot \nabla_x)U^i + \lambda \left(U^i - (T^*U(t,Tx))^i\right)\leq G^i(x,U) \text{ in } \{x_i =0\}.
\end{equation}
\end{Prop}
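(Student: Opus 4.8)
The plan is to check the two relations pointwise. I would fix a point $x_0\in\mathbb{O}_d$ and an index $i\in\{1,\dots,d\}$, produce a test pair $(V,y)$ for which $x_0$ is a strict minimum of $\phi_{V,y}$ over a ball $B^1_R$ so that Definition~\ref{defmons} applies at $x_0$, and then extract the $i$-th equation (or inequality) from (\ref{smon}) by letting the test data vary in a coordinate direction $\pm e_i$, where $e_i$ is the $i$-th vector of the canonical basis of $\mathbb{R}^d$. To set this up, fix $R$ large enough that $(x_0)_1+\dots+(x_0)_d<R$ and that $R$ exceeds the threshold implicit in Definition~\ref{defmons}, and pick a small vector $q\in\mathbb{R}^d$ that is \emph{admissible}, i.e.\ such that $y:=x_0-q\in\mathbb{O}_d$; concretely this requires $|q|$ small together with $q_j\le 0$ for every $j$ with $(x_0)_j=0$. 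Since $U\in W^{2,\infty}$ is in particular $\mathcal{C}^1$, one has $\nabla_x\phi_{V,y}(x)=U(x)-V+D_xU(x)\cdot(x-y)$, so the choice $V:=U(x_0)+D_xU(x_0)\cdot q$ makes $x_0$ a critical point of $\phi_{V,y}$.

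The main obstacle is to upgrade this critical point into a strict \emph{global} minimum over $B^1_R$, which is what Definition~\ref{defmons} demands. A direct expansion of $\phi_{V,y}(x)-\phi_{V,y}(x_0)$ that uses the choice of $V$ gives
\begin{equation*}
\phi_{V,y}(x)-\phi_{V,y}(x_0)=\langle U(x)-U(x_0),\,x-x_0\rangle+\langle q,\,\rho(x)\rangle,
\end{equation*}
where $\rho(x)$ denotes the first-order Taylor remainder of $U$ at $x_0$. On the convex compact set $B^1_R$ the segment $[x_0,x]$ stays in $B^1_R$, so integrating $D_xU$ along it and using the standing assumption $D_xU>0$ — which, by continuity and compactness of $B^1_R$, yields $\langle D_xU(z)\xi,\xi\rangle\ge c_0|\xi|^2$ on $B^1_R$ for some $c_0=c_0(R)>0$ — we get $\langle U(x)-U(x_0),x-x_0\rangle\ge c_0|x-x_0|^2$. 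Since $W^{2,\infty}\subset\mathcal{C}^{1,1}$, the remainder obeys $|\rho(x)|\le\tfrac{M}{2}|x-x_0|^2$ with $M:=\|D^2_xU\|_{L^\infty}$, so as soon as $|q|<2c_0/M$ (no constraint if $M=0$),
\begin{equation*}
\phi_{V,y}(x)-\phi_{V,y}(x_0)\ \ge\ \big(c_0-\tfrac{M}{2}|q|\big)|x-x_0|^2\ >\ 0\qquad\text{for all }x\in B^1_R\setminus\{x_0\}.
\end{equation*}
After shrinking $q$ so that this smallness and admissibility both hold, $x_0$ is thus the strict minimum of $\phi_{V,y}$ over $B^1_R$. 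This is exactly where the two hypotheses are used: positive definiteness of $D_xU$ provides the uniform strong-monotonicity bound, and the $W^{2,\infty}$ regularity controls the Taylor remainder globally by a quadratic — the price being the restriction to small $q$, which is harmless since only the directions $\pm e_i$ are needed.

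To conclude, I would apply (\ref{smon}) at $x_0$ with $x_0-y=q$. Because $U(x_0)-V=-D_xU(x_0)\cdot q$, the coupling term in (\ref{smon}) satisfies $\langle F(x_0,U(x_0)),\,U(x_0)-V\rangle=-\langle (F(x_0,U(x_0))\cdot\nabla_x)U(x_0),\,q\rangle$, and (\ref{smon}) rearranges precisely into
\begin{equation*}
\langle\mathcal{P}(x_0),\,q\rangle\ge 0,\qquad \mathcal{P}^j(x):=rU^j+(F(x,U)\cdot\nabla_x)U^j+\lambda\big(U^j-(T^{*}U(Tx))^j\big)-G^j(x,U),
\end{equation*}
valid for every admissible small $q$. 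It then remains to specialise the direction. If $(x_0)_i>0$, both $q=te_i$ and $q=-te_i$ are admissible for small $t>0$, which forces $\mathcal{P}^i(x_0)=0$; if $(x_0)_i=0$, only $q=-te_i$ is admissible, giving $\mathcal{P}^i(x_0)\le 0$. Letting $i$ and $x_0$ range over all indices and all points of $\{x_i>0\}$, respectively $\{x_i=0\}$, yields the two asserted relations.
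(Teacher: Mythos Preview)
Your argument is correct and follows the same approach as the paper's: choose $y=x_0-q$ close to $x_0$ and $V=U(x_0)+D_xU(x_0)\cdot q$ so that $x_0$ is a critical point of $\phi_{V,y}$, check that it is a strict minimum, apply Definition~\ref{defmons}, and vary $q$ in the directions $\pm e_i$ to recover the $i$-th equation (respectively the inequality on the face $\{x_i=0\}$). The one place where you go further than the paper is in verifying that the strict minimum is \emph{global} on $B^1_R$---via the uniform strong-monotonicity bound $\langle U(x)-U(x_0),x-x_0\rangle\ge c_0|x-x_0|^2$ on the compact $B^1_R$ together with the $W^{2,\infty}$ quadratic control on the Taylor remainder---whereas the paper only computes the Hessian of $\phi_{V,y}$ at $x_0$ (formula~(\ref{hessphi})) and infers a strict \emph{local} minimum; since Definition~\ref{defmons} asks for a strict minimum on all of $B^1_R$, your treatment is in fact the more careful one on this point.
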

\begin{proof}
Let us fix $x_0$ in the interior of $\mathbb{O}_d$. Let us define $\phi_{V,y}$ as in definition \ref{defmons}. Let us remark that
\begin{equation}\label{gradphi}
\nabla_x \phi_{V,y}(x_0) = D_x U(x_0) (x_0 - y) + U(x_0) - V;
\end{equation}
\begin{equation}\label{hessphi}
D^2_x \phi_{V,y}(x_0) = 2 D_x U(x_0) + D^2_xU(x_0)(x_0 - y).
\end{equation}
Let us note that from the assumption on the monotonicity of $U$, there is a neighborhood $\mathcal{O}$ of $x_0$ such that for any point $y \in \mathcal{O}$, the right hand side of (\ref{hessphi}) is strictly positive. Then, taking such a $y$ and choosing $V$ such that (\ref{gradphi}) vanishes, we have that $x_0$ is a point of strict minimum of $\phi_{V,y}$. Because $U$ is a monotone solution of (\ref{sme}), we deduce that
\begin{equation}
r\langle U(x_0), x_0 - y + \langle F(x_0,U(x_0)), D_x U(x_0)(x_0 - y)\rangle \rangle \geq \langle G(x_0, U(x_0)), x_0 - y\rangle.
\end{equation}
This last inequality holds for any $y \in \mathcal{O}$. From this we easily deduce that $U$ satisfies (\ref{sme}) at $x_0$. We argue in the same way when $x_0$ is on the boundary of $\mathbb{O}_d$.
\end{proof}
\begin{Rem}
Let us remark that the assumption $D_x U(x) > 0$ in the order of positive definite matrix on $\mathbb{O}_d$ is usually verified when some strict monotonicity assumption is made on $(G,F)$.
\end{Rem}
\begin{Rem}
As it is usually the case in the MFG theory, the value function does not necessary satisfy the master equation on states where there is no player, but only an inequality. This is reminiscent of the weak solutions studied in \citep{cardaliaguet2015second} for instance.
\end{Rem}

\subsection{The time dependent case}
In this section we present the analogue of definition \ref{defmons} for the case of (\ref{me}). Let us note that in general in the MFG theory, the time regularity is not necessary the main challenge and that we could easily define a notion of monotone solutions which are smooth in time. However we prefer, for completeness, to present this concept for functions which are not necessary smooth in time, even though it makes this section more technical. Following the previous part, we define a monotone solution in the time dependent setting with the following :

\begin{Def}\label{defms}
A function $U : (0,\infty)\times \mathbb{O}_d \to \mathbb{R}^d$ is a monotone solution of (\ref{me}) if 
\begin{itemize}
\item for any $V \in \mathbb{R}^d, y \in \mathbb{O}_d$ and $R> 0$ sufficiently large, for any $(t_0,x)\in (0,\infty) \times B^1_R$ such that $x$ is a point of strict minimum of $x \to \langle U(t_0,x) - V, x - y\rangle$ on $B^1_R$, for any smooth real function $\phi$ such that $\phi(t) < \langle U(t,x), x - y \rangle$ for $t \in (t_0 - \epsilon, t_0)$ for some $\epsilon > 0$ with $\phi(t_0) = \langle U(t_0,x), x - y \rangle$, the following holds :
\begin{equation}\label{testt}
\begin{aligned}
\frac{d}{dt} \phi(t_0) &+ \lambda \langle U(t_0,x_0) - T^*U(t_0,Tx_0),x_0 - y \rangle \geq \\
&\langle F(x,U(t_0,x)), U(t_0,x) - V\rangle + \langle G(x,U(t_0,x)), x - y \rangle.
\end{aligned}
\end{equation} 
\item The initial condition holds.\begin{equation}
U(0,x) = U_0(x) \text{ on } \mathbb{O}_d.
\end{equation}
\end{itemize}
\end{Def}
In some sense, we are treating the time derivative using techniques from viscosity solutions. As in the stationary case, we now present results of existence, uniqueness, stability and consistency for this notion of solutions.
\begin{Prop}
Let $U: (0,\infty) \times \mathbb{O}_d \to \mathbb{R}^d$ be a smooth function, classical solution of (\ref{me}). Then it is a monotone solution of (\ref{me}) in the sense of definition \ref{defms}. Moreover, under the assumptions of proposition \ref{existme}, there exists a monotone solution of (\ref{me}).
\end{Prop}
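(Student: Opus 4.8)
The plan is to prove the two assertions in turn: the first is a direct verification, the second a vanishing-viscosity argument built on Proposition~\ref{existme}.

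Take $U$ smooth, a classical solution of (\ref{me}), and fix a test configuration: $V\in\mathbb{R}^d$, $y\in\mathbb{O}_d$, $R$ large, $(t_0,x_0)\in(0,\infty)\times B^1_R$ with $x_0$ a strict minimum on $B^1_R$ of $\phi_{V,y}\colon x\mapsto\langle U(t_0,x)-V,x-y\rangle$, and a smooth $\phi$ with $\phi(t_0)=\langle U(t_0,x_0),x_0-y\rangle$ and $\phi(t)<\langle U(t,x_0),x_0-y\rangle$ for $t\in(t_0-\epsilon,t_0)$. Since $U$ is smooth, $t\mapsto\langle U(t,x_0),x_0-y\rangle-\phi(t)$ is $C^1$ near $t_0$, nonnegative to the left of $t_0$ and zero at $t_0$, so $\tfrac{d}{dt}\phi(t_0)\ge\langle\partial_tU(t_0,x_0),x_0-y\rangle$. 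On the other hand, from $\langle U(x),x-y\rangle=\phi_{V,y}(x)+\langle V,x-y\rangle$ one gets the pointwise identity $\langle(F(x,U)\cdot\nabla_x)U,x-y\rangle=\langle F(x,U),\nabla_x\phi_{V,y}(x)\rangle+\langle F(x,U),V-U(x)\rangle$. Evaluating (\ref{me}) at $(t_0,x_0)$, taking the scalar product with $x_0-y$ and substituting this identity, one arrives at
\begin{multline*}
\langle\partial_tU(t_0,x_0),x_0-y\rangle+\lambda\langle U(t_0,x_0)-T^*U(t_0,Tx_0),x_0-y\rangle\\
=\langle G(x_0,U(t_0,x_0)),x_0-y\rangle+\langle F(x_0,U(t_0,x_0)),U(t_0,x_0)-V\rangle-\langle F(x_0,U(t_0,x_0)),\nabla_x\phi_{V,y}(x_0)\rangle.
\end{multline*}
If $x_0$ is interior to $B^1_R$ then $\nabla_x\phi_{V,y}(x_0)=0$; if $x_0\in\partial B^1_R$, first order optimality gives $\nabla_x\phi_{V,y}(x_0)=-\mu_0(1,\dots,1)+\sum_i\mu_ie_i$ with $\mu_0,\mu_i\ge0$, $\mu_0>0$ only when $\sum_ix_0^i=R$ and $\mu_i>0$ only when $x_0^i=0$, so hypotheses~\ref{stab} and~\ref{grow} (for $R$ large) give $\langle F(x_0,U(t_0,x_0)),\nabla_x\phi_{V,y}(x_0)\rangle\le0$. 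Together with $\tfrac{d}{dt}\phi(t_0)\ge\langle\partial_tU(t_0,x_0),x_0-y\rangle$, this yields (\ref{testt}); the initial condition is immediate. This is simply the time-dependent counterpart of the corresponding statement for (\ref{sme}), with the time derivative handled by viscosity-solution techniques.

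For existence, recall from the proof of Proposition~\ref{existme} that a Lipschitz solution of (\ref{me}) is produced as the locally uniform limit, along a subsequence $\epsilon\to0$, of smooth solutions $U_\epsilon$ of the regularized problem (\ref{regme}), with $\|D_xU_\epsilon(t,\cdot)\|\le C$ uniformly in $\epsilon$ on each $[0,t_f]$. I would check that such a limit $U$ is a monotone solution by the perturbation technique of Proposition~\ref{stabs} combined with the computation just made. Given a test configuration $(V,y,R,t_0,x_0,\phi)$ for $U$, apply Lemma~\ref{stegall} in the $x$ variable and add a one-sided time penalization $\eta(t_0-t)$ to obtain, for each $\epsilon$, a minimizer $(t_\epsilon,x_\epsilon)$ of $(t,x)\mapsto\langle U_\epsilon(t,x)-V-a_\epsilon,x-y\rangle-\phi(t)+\eta(t_0-t)$ on a small cylinder $[t_0-\epsilon',t_0]\times B^1_R$, with $|a_\epsilon|\le\epsilon$ and, after also sending $\eta\to0$, $(t_\epsilon,x_\epsilon)\to(t_0,x_0)$. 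Since $U_\epsilon$ solves (\ref{regme}) classically, the argument above applied at $(t_\epsilon,x_\epsilon)$ produces (\ref{testt}) for $U_\epsilon$ up to two error terms: the first order term $-\langle F(x_\epsilon,U_\epsilon(t_\epsilon,x_\epsilon)),\nabla_x\phi^\epsilon(x_\epsilon)\rangle$, where $\phi^\epsilon(x):=\langle U_\epsilon(t_\epsilon,x)-V-a_\epsilon,x-y\rangle$, which is $\le O(|a_\epsilon|)$ by the same sign analysis as above; and the extra term coming from the $\epsilon$-diffusion, namely $\epsilon\sum_i(x_\epsilon^i-y^i)\bigl(\sum_j\sigma(x_\epsilon^j)\partial_{jj}U_\epsilon^i+\sigma'(x_\epsilon^i)\partial_iU_\epsilon^i\bigr)$, which equals $\epsilon\sum_j\sigma(x_\epsilon^j)\partial_{jj}\phi^\epsilon(x_\epsilon)+O\bigl(\epsilon\|D_xU_\epsilon(t_\epsilon,\cdot)\|\bigr)$; the second order optimality condition at the spatial minimum together with $\sigma\ge0$ make the leading part nonnegative, and the uniform gradient bound makes the remainder $O(\epsilon)$. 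Letting $\epsilon\to0$ and then $\eta\to0$ gives (\ref{testt}) for $U$.

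The heart of the proof is the existence part, where several parameters must be juggled at once: $R$ has to be chosen large \emph{uniformly in} $\epsilon$ so that hypotheses~\ref{stab}--\ref{grow} control the boundary contribution; Lemma~\ref{stegall} is needed so that the spatial minimum of $x\mapsto\langle U_\epsilon(t_\epsilon,x)-V-a_\epsilon,x-y\rangle$ is \emph{strict}, which is what simultaneously unlocks the second order optimality condition and the convergence $x_\epsilon\to x_0$; and the one-sided time penalization is what confines $t_\epsilon$ near $t_0$ even though $U$ is only Lipschitz (hence only left test functions are available) in time. The one genuinely delicate computation is extracting the correct sign of the regularizing term from the second order condition, keeping track of the case where $x_\epsilon$ lies on $\partial B^1_R$: there the $\epsilon$-terms carrying an index $i$ with $x_\epsilon^i=0$ vanish because $\sigma(0)=\sigma'(0)=0$, so the boundary behaves exactly as in the first part.
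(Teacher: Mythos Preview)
Your argument is correct and matches the paper's approach: the paper's own proof records only the one-sided time-derivative inequality $\tfrac{d}{dt}\phi(t_0)\ge\langle\partial_tU(t_0,x_0),x_0-y\rangle$ and states that ``the rest of the proof follows easily''. You supply those details faithfully, and your vanishing-viscosity treatment of the existence part---controlling the $\epsilon$-diffusion via second-order optimality at the spatial minimum and the uniform gradient bound from Proposition~\ref{existme}---is a legitimate way to unpack that claim, since the stability result (Proposition~\ref{propstab}) does not directly cover the second-order perturbation in (\ref{regme}).
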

\begin{proof}
We only state that for any $x,y \in \mathbb{O}_d$ and $t_0 > 0$, for any $\phi$ such that $\phi(t) < \langle U(t,x), x - y \rangle$ for $t \in (t_0 - \epsilon, t_0)$ for some $\epsilon > 0$ with $\phi(t_0) = \langle U(t_0,x), x - y \rangle$, then one has $\frac{d}{dt}\phi(t_0) \geq \langle \partial_t U(t_0,x),x-y\rangle$. The rest of the proof follows easily.
\end{proof}
\begin{Theorem}\label{uniqt}
Under hypotheses \ref{stab} and \ref{mon}, there exists at most one uniformly continuous monotone solution of (\ref{me}) in the sense of definition \ref{defms}.
\end{Theorem}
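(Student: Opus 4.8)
\emph{Proof plan.} The plan is to transpose the proof of Theorem \ref{uniqstat} to the time-dependent setting, the new ingredients being a doubling of the time variable, in the spirit of the theory of viscosity solutions, to cope with the fact that a monotone solution need not be regular in time, and an additional linear-in-time penalization to make up for the absence of a zeroth order term. Let $U$ and $V$ be two uniformly continuous monotone solutions of (\ref{me}) and set $W(t,x,y) = \langle U(t,x) - V(t,y), x-y\rangle$. As in Proposition \ref{uniq1}, it is enough to prove $W \geq 0$: indeed $W(0,\cdot,\cdot) = \langle U_0(x) - U_0(y), x-y\rangle \geq 0$ by Hypothesis \ref{mon}, and $W \geq 0$ forces $U(t,\cdot) = V(t,\cdot)$ (take $y = x + \rho e$, divide by $\rho$, let $\rho \to 0^+$ and use continuity). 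Fix $T_f > 0$ and $R$ large enough for Definition \ref{defms} to be applicable and, as in the stationary case, so that $T(B^1_R) \subset B^1_R$. Since there is here no term $rW$ with which to isolate $W$ at a minimizing point, I would fix $\eta > 0$, argue by contradiction assuming $\inf_{[0,T_f]\times (B^1_R)^2}(W + \eta t) < 0$, and then let $\eta \to 0$ and $R, T_f \to \infty$.

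Under that assumption, for $\beta, \epsilon > 0$ small I would apply Lemma \ref{stegall} to the continuous function $(t,s,x,y) \mapsto \langle U(t,x) - V(s,y), x-y\rangle + \eta t + (t-s)^2/(2\beta)$ on the compact set $[0,T_f]^2 \times (B^1_R)^2$, obtaining $(p,q,a,b)$ of norm at most $\epsilon$ such that adding $pt + qs + \langle a,x\rangle + \langle b,y\rangle$ produces a function with a strict global minimum, attained at some $(t_\beta, s_\beta, x_\beta, y_\beta)$. Standard estimates of the doubling method, together with $W(0,\cdot,\cdot) \geq 0$ and the uniform continuity of $U$ and $V$, ensure that for $\beta, \epsilon$ small this minimal value is negative and that $t_\beta, s_\beta \in (0, T_f]$ with $|t_\beta - s_\beta| \to 0$. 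Freezing the other variables, $x_\beta$ is a strict minimum on $B^1_R$ of $x \mapsto \langle U(t_\beta, x) - (V(s_\beta, y_\beta) - a), x - y_\beta\rangle$ — the linear term $\langle a, x\rangle$ being absorbed, up to an additive constant, into the vector $V(s_\beta, y_\beta) - a \in \mathbb{R}^d$ — while $t \mapsto \langle U(t, x_\beta), x_\beta - y_\beta\rangle + \eta t + (t-s_\beta)^2/(2\beta) + pt$ has a minimum at $t_\beta$; after subtracting a small multiple of $(t - t_\beta)^2$ this gives an admissible smooth test function $\phi_U$ for Definition \ref{defms} with $\frac{d}{dt}\phi_U(t_\beta) = -\eta - p - (t_\beta - s_\beta)/\beta$. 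Symmetrically, $y_\beta$ is a strict minimum of $y \mapsto \langle V(s_\beta, y) - (U(t_\beta, x_\beta) - b), y - x_\beta\rangle$ and yields a test function $\phi_V$ for $V$ with $\frac{d}{ds}\phi_V(s_\beta) = -q + (t_\beta - s_\beta)/\beta$.

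Writing the defining inequality (\ref{testt}) for $U$ at $t_\beta$ (with data $V(s_\beta,y_\beta)-a$ and $y_\beta$) and for $V$ at $s_\beta$ (with data $U(t_\beta,x_\beta)-b$ and $x_\beta$) and summing, the terms $\pm(t_\beta - s_\beta)/\beta$ cancel. The $G$- and $F$-contributions combine into $\langle (G,F)(x_\beta, U(t_\beta,x_\beta)) - (G,F)(y_\beta, V(s_\beta,y_\beta)), (x_\beta, U(t_\beta,x_\beta)) - (y_\beta, V(s_\beta,y_\beta))\rangle \geq 0$ by Hypothesis \ref{mon}, up to error terms $\langle F, a\rangle$, $\langle F, b\rangle$ of size $O(\epsilon)$ since $F$ is bounded on the relevant compact set; and, using $\langle T^*w, z\rangle = \langle w, Tz\rangle$, the common-noise contributions equal $\lambda$ times the difference of $W$ at $(t_\beta, x_\beta, y_\beta)$ and $W$ evaluated at the $T$-image $(Tx_\beta, Ty_\beta)$, the latter being controlled by the former up to $O(\epsilon)$ thanks to strict minimality and $T(B^1_R) \subset B^1_R$, exactly as in the proof of Theorem \ref{uniqstat}. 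What is left after these simplifications is an inequality of the form $-\eta \geq -C_R \epsilon$ with $C_R$ independent of $\eta$, which is absurd as soon as $\epsilon < \eta / C_R$; hence $W + \eta t \geq 0$ on $[0,T_f] \times (B^1_R)^2$, and the theorem follows. I expect the main obstacle to be precisely this treatment of the non-smooth time dependence: the time doubling has to be combined both with the penalization $\eta t$ (which replaces the role played by $rW$ in the stationary argument) and with a careful verification that the doubled minimizer sits at a strictly positive time, so that (\ref{testt}) can legitimately be invoked for both $U$ and $V$.
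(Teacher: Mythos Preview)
Your proposal is correct and follows essentially the same route as the paper's proof: both argue by contradiction, double the time variable, apply Stegall's lemma to perturb to a strict minimum, use the two defining inequalities of Definition \ref{defms} at the minimizer, and handle the boundary case $t_\beta=0$ (resp.\ $s_\beta=0$) via the doubling estimate $|t_\beta-s_\beta|\to 0$ together with uniform continuity and $W(0,\cdot,\cdot)\geq 0$.

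The one noteworthy variant is how the missing zeroth-order term is manufactured. You add a linear penalization $\eta t$ (so that the summed test-function derivatives leave behind a fixed $-\eta$ which clashes with the $O(\epsilon)$ error terms), while the paper multiplies $\langle U(t,x)-V(s,y),x-y\rangle$ by an exponential weight $e^{-\gamma(t+s)}$ (so that differentiation in time produces a term $\gamma\langle U-V,x-y\rangle$, negative at the minimizer, which plays the same role). Both devices are standard in viscosity-solution arguments; yours is arguably a bit more transparent since the exponential weight forces one to carry the factor $e^{\gamma(t_*+s_*)}$ through all the estimates. The handling of the common-noise term and of the $F$-remainders $\langle F,a\rangle,\langle F,b\rangle$ is identical in both proofs.
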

\begin{proof}
Let us denote by $U$ and $V$ two such solutions. We define $W : [0,\infty) \times \mathbb{O}_d^2 \to \mathbb{R}$ by 
\begin{equation}
W(t,x,y) = \langle U(t,x) - V(t,y), x- y \rangle.
\end{equation}
Our aim is to proceed as usual by proving that $W \geq 0$. Let us assume that there exists $(t_0,x_0,y_0)$ such that $W(t_0,x_0,y_0) < 0$. Let us define the function $Z : [0,t_0]^2 \times \mathbb{O}_d^2 \to \mathbb{R}$ by
\begin{equation}
Z(t,s,x,y) = e^{-\gamma(t+s)} \langle U(t,x) - V(s,y),x-y \rangle + \frac{1}{\alpha}(t-s)^2 + \langle a , x\rangle + \langle b,y\rangle + \delta_1 t + \delta_2 s.
\end{equation}
This function is well defined and depends on the parameters $\gamma, \alpha, a,b,\delta_1$ and $\delta_2$. For any $\gamma$, there exists $\epsilon > 0$ such that if 
\begin{equation}\label{condt}
|a| + |b| + |\delta_1| + |\delta_2| < \epsilon
\end{equation}
then 
\begin{equation}\label{hypwrong}
\min Z < -c 
\end{equation}
 for some $c > 0$ depending only on $\gamma$ and $\epsilon$.  This easily comes from the evaluation of $Z$ at $(t_0,t_0,x_0,y_0)$. From lemma \ref{stegall}, there exists $a,b,\delta_1, \delta_2$ satisfying (\ref{condt}) such that $Z$ has a strict minimum on $[0,t_0]^2 \times \mathbb{O}_d^2$ at $(t_*,s_*,x_*,y_*)$. If $t_*> 0$, then 
 \begin{equation}
 \phi_1 : t \to \langle V(s_*,y_*), x_* - y_*\rangle + e^{\gamma(t-t_*)}\langle U(t_*,x_*) - V(s_*,y_*),x_*-y_* \rangle + e^{\gamma(t + s_*)}\delta_1 (t_* - t)
 \end{equation}
can be chosen as a test function in (\ref{testt}) for $U$. If $s_* > 0$ we can construct the analogue function $\phi_2$ for $V$. Thus if both $t_* > 0 $ and $s_* > 0$, then one obtain 
\begin{equation}\label{int47}
\begin{aligned}
\gamma \langle U(t_*,x_*) &- V(s_*,y_*),x_*-y_* \rangle  + \lambda \langle U(t_*,x_*) - V(s_*,y_*), x_* -y_*\rangle  \geq  \\
&e^{\gamma(t_* + s_*)} \left( \delta_1 + \delta_2  + \langle F(x_*,U(t_*,x_*)),a\rangle + \langle F(y_*,V(s_*,y_*)),b\rangle \right)\\
&+ \langle U(t_*,Tx_*) - V(s_*,Ty_*), T(x_* - y_*)\rangle.
\end{aligned}
\end{equation}
Let us note that a posteriori, choosing $\epsilon$ in (\ref{condt}) as small as we want (what we can do and which does not alter (\ref{int47})), we can contradict (\ref{hypwrong}).

We now treat the case in which $t_* = 0$ (the case $s_* = 0$ is similar). Let us take $\eta, \eta', \eta'' > 0$. We want to show that $Z(t_*,s_*,x_*,y_*) \geq - \eta''$ for a certain choice of the parameters. Because we can choose $\alpha$ as small as we want (without altering the proof of the previous case), we can assume that if $t_* = 0$, then $s_* \leq \eta$. Thus using the continuity of $V$ (if $\eta$ is small enough compared to $\eta'$) that $|V(s_*,y_*) - U_0(y_*)|\leq \eta'$. From which we deduce using the continuity of $U$ (if $\eta'$ is small enough compared to $\eta''$) that $Z(t_*,s_*,x_*,y_*) \geq - \eta''$ which contradicts (\ref{hypwrong}) since $\eta''$ is as small as we want. Thus the function $W$ is positive and we conclude as usual.
\end{proof}

\begin{Prop}\label{propstab}
Consider a sequence $(F_n,G_n)_{n \in \mathbb{N}}$ of applications from $\mathbb{O}_d\times \mathbb{R}^d$ into $\mathbb{R}^{2d}$ which converges uniformly over all compact toward $(F,G)$. If for all $n$, $U_n$ is a continuous monotone solution of the master equation associated to $F_n$ and $G_n$ and if $(U_n)_{n \in \mathbb{N}}$ converges uniformly toward $U$, then $U$ is a monotone solution of the master equation associated to $F$ and $G$.
\end{Prop}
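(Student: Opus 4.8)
The plan is to reproduce the argument used for the stationary stability result of Proposition~\ref{stabs}: freeze the data of a test functional, use Stegall's lemma to perturb it into one admitting a genuine strict minimizer of each approximating problem, transfer the defining inequality (\ref{smon}) along the sequence, and then pass to the limit, the only real work being the localization of the perturbed minimizers.

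First I would fix $V \in \mathbb{R}^d$, $y \in \mathbb{O}_d$, $R>0$ sufficiently large, and a point $x_0$ of strict minimum on $B^1_R$ of the functional $\phi : x \mapsto \langle U(x)-V, x-y\rangle$; the target is to verify (\ref{smon}) for the limit $U$ at $x_0$. Since $x_0$ will not in general minimize the analogous functionals built from the $U_n$, I would apply Lemma~\ref{stegall} (the functional built from a continuous $U_n$ is continuous, hence weakly sequentially lower semicontinuous on the compact $B^1_R$) to produce, for every $n$, a vector $a_n$ with $|a_n|\le n^{-1}$ such that $x \mapsto \langle U_n(x)-V, x-y\rangle + \langle a_n, x\rangle$ attains a \emph{strict} minimum at some $x_n \in B^1_R$. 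Absorbing the linear term shows that $x_n$ is a strict minimizer of the test functional associated with $U_n$ and datum $V-a_n$, so Definition~\ref{defmons} applied to the $n$-th equation yields
\begin{equation*}
\begin{aligned}
r\langle U_n(x_n), x_n - y\rangle &+ \lambda \langle U_n(x_n) - T^* U_n(Tx_n), x_n - y\rangle \geq \\
&\langle G_n(x_n, U_n(x_n)), x_n - y\rangle + \langle F_n(x_n, U_n(x_n)), U_n(x_n) - V + a_n\rangle.
\end{aligned}
\end{equation*}

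The crucial step, and the one I expect to be the main obstacle, is the convergence $x_n \to x_0$, since the strictness of the minimum is a hypothesis on the limit functional and must be transported to the approximations through uniform convergence alone. I would compare the perturbed functional at $x_n$ and at $x_0$, namely $\langle U_n(x_n)-V, x_n-y\rangle + \langle a_n, x_n\rangle \le \langle U_n(x_0)-V, x_0-y\rangle + \langle a_n, x_0\rangle$. Using $U_n \to U$ uniformly on the compact $B^1_R$ and $a_n \to 0$, the right-hand side converges to $\phi(x_0) = \min_{B^1_R}\phi$; extracting a convergent subsequence $x_{n_k}\to\bar x$ by compactness and using uniform convergence together with the continuity of $U$, the left-hand side converges to $\phi(\bar x)$. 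Hence $\phi(\bar x)\le \phi(x_0)$, and strictness of the minimum forces $\bar x = x_0$; as this holds for every subsequential limit, the whole sequence converges to $x_0$, and in particular $U_n(x_n)\to U(x_0)$.

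Finally I would pass to the limit $n\to\infty$ in the displayed inequality. The uniform convergence of $(F_n,G_n)\to(F,G)$ on compact sets, the continuity of $F,G,U$ and of $T$, the convergences $x_n\to x_0$, $U_n(x_n)\to U(x_0)$, $U_n(Tx_n)\to U(Tx_0)$ and $a_n\to 0$ (so that $\langle F_n(x_n,U_n(x_n)), a_n\rangle \to 0$) let each term converge to its counterpart evaluated at $x_0$ with datum $V$. This produces exactly (\ref{smon}) for $U$ at $x_0$, which is the assertion that $U$ is a monotone solution of the master equation associated with $F$ and $G$, completing the proof.
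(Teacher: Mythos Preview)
Your argument is a correct proof of the \emph{stationary} stability result (Proposition~\ref{stabs}), but Proposition~\ref{propstab} lives in the time-dependent subsection and concerns the master equation~(\ref{me}) together with Definition~\ref{defms}. That definition involves, in addition to a spatial strict minimizer $x$ at a fixed time $t_0$, a smooth real function $\phi$ touching $t \mapsto \langle U(t,x), x-y\rangle$ from below at $t_0$; the required inequality~(\ref{testt}) then carries a term $\frac{d}{dt}\phi(t_0)$ in place of the stationary discount term $r\langle U(x_0), x_0-y\rangle$. Your proposal never introduces such a $\phi$, and the inequality you transfer along the sequence is~(\ref{smon}), not~(\ref{testt}).

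The missing ingredient is the construction, for each $n$, of a time $t_n \to t_*$ and of a temporal test function $\phi_n$ close to $\phi$ in $C^1$ such that $\phi_n$ touches $t \mapsto \langle U_n(t,x_n), x_n - y\rangle$ from below at $t_n$. The paper obtains this by a second use of Stegall's lemma, this time in the time variable: one perturbs $\phi$ into $\phi_n(t) = \phi(t) + \eta_n(t - t_n) + u_n(t_n) - \phi(t_n)$ with a small slope $|\eta_n| \to 0$ and a vertical shift, after first checking (from the strict touching of $\phi$ and the uniform convergence $U_n \to U$) that the perturbed minimum in $t$ must fall in a shrinking neighbourhood of $t_*$. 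Only once both the spatial approximations $x_n \to x$ (which you did construct correctly) and the temporal approximations $(t_n,\phi_n)$ are in hand can one write the $n$-th instance of~(\ref{testt}) and pass to the limit. This entire temporal step is absent from your proposal.
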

\begin{proof}
Let us take $V \in \mathbb{R}^d, y \in \mathbb{O}_d$ and $R> 0$ sufficiently large, $(t_*,x)\in (0,\infty) \times B^1_R$ such that $x$ is a point of strict minimum of $x \to \langle U(t_*,x) - V, x - y\rangle$ on $B^1_R$, and a smooth real function $\phi$ such that $\phi(t) < \langle U(t,x), x - y \rangle$ for $t \in (t_* - \epsilon, t_*)$ for some $\epsilon > 0$ with $\phi(t_*) = \langle U(t_*,x), x - y \rangle$. Reasoning as in the proof of proposition \ref{stabs}, there exists a sequence $(a_n)_{n \in \mathbb{N}} \in (\mathbb{R}^d)^{\mathbb{N}}$ which converges toward $0$ and such that for all $n \geq 0$, $x \to \langle U_n(t_*,x) - V + a_n,x -y\rangle$ has a strict minimum on $B^1_R$ attained at $x_n$. As in the proof of proposition \ref{stabs}, we obtain that $(x_n)_{n \in \mathbb{N}}$ converges toward $x$. We claim that there exists $(t_n)_{n \in \mathbb{N}} \in (t_* - \epsilon, t_*)^{\mathbb{N}}$ and $(\phi_n)_{n \in \mathbb{N}}$ a sequence of functions in $\mathcal{C}^1(\mathbb{R})$ such that :
\begin{itemize}
\item $ t_n \to t_* \text{ as } n \to \infty.$
\item $\|\phi_n - \phi\|_{C^1} \to 0$ as $n \to \infty$.
\item $\phi_n(t) < \langle U_n(t,x_n), x_n - y \rangle$ for $t \in (t_* - \epsilon, t_n)$.
\item $\phi_n(t_n) = \langle U_n(t_n,x_n), x_n - y \rangle$.
\end{itemize}
Let us detail why such sequences exists. Let us define $u_n(t) = \langle U_n(t,x_n), x_n - y \rangle$. For any $n \geq 0$, extracting a subsequence if necessary, we can assume that 
\begin{equation}
\alpha_n := \inf \left\{ u_n(t) - \phi(t) |t \in \left(t_* - \epsilon, t_* - \frac{1}{n+1}\right)\right\} > 0,
\end{equation}
\begin{equation}
|u_n(t_*) - \phi(t_*)| \leq \frac{\alpha_n}{2}.
\end{equation}
Thus from lemma \ref{stegall}, there exists $|\eta_n| < \min\{\frac{1}{n+1};  \frac{\alpha_n}{2(t_*+1)}\}$ such that $t \to u_n(t) - \phi(t) - \eta_n t$ has a strict minimum on $[t_* - \epsilon, t_*]$. By construction this minimum is in $[t_* - \frac{1}{n+1}, t_*]$. Defining $t_n$ this point of strict minimum and $\phi_n(t) := \phi(t) + \eta_n(t- t_n) + u_n(t_n) - \phi(t_n)$, we obtain the aforementioned sequences.

Because for all $n \geq 0$ $U_n$ is a monotone solution, we obtain the following.
\begin{equation}
\begin{aligned}
\frac{d}{dt} \phi(t_n) + \eta_n &+ \lambda \langle U_n(t_n,x_n) - T^*U_n(t_n,Tx_n),x_n - y \rangle \geq \\
&\langle F_n(x_n,U_n(t_n,x_n)), U_n(t_n,x_n) - V + a_n\rangle + \langle G_n(x,U_n(t_n,x_n)), x_n - y \rangle.
\end{aligned}
\end{equation}
Thus passing to the limit $n \to \infty$ we obtain the required result.
\end{proof}

\subsection{A generalization of this method}
As already mentioned above, the aim of this paper is to present a new notion of solution for MFG master equations and not necessary to enter into too much details on these solutions. However we believe the next remark to be worth mentioning. It has been pointed out to us by Pierre-Louis Lions (Coll\`{e}ge de France).

Let us consider the case $\lambda = 0$. The main argument to establish uniqueness of monotone solutions is to consider two such solutions $U$ and $V$ and to prove that $W$ defined by 
\begin{equation}
W(x,y) = \langle U(x) - V(y), x - y\rangle 
\end{equation}
is positive. Instead, for instance, we could have defined the function $W$ with
\begin{equation}
W(x,y) = \langle U(x) - V(y), \phi(x) - \phi(y)\rangle 
\end{equation}
for some $\phi : \mathbb{O}_d \to \mathbb{R}^d$. Now let us remark that if $D_x\phi(x)$ is an invertible matrix for any $x$ in the interior of $\mathbb{O}_d$, the property 
\begin{equation}
W \geq 0 \Rightarrow U = V
\end{equation}
still holds. This remark immediately generalizes the result of this section to a wider class systems. Indeed by replacing the condition $(G,F)$ is monotone by for any $x,y \in \mathbb{O}_d, U,V \in \mathbb{R}^d$
\begin{equation}
\langle G(x,U) - G(y,V), \phi(x) - \phi(y) \rangle + \langle F(x,U) - F(y,V), U.D_x \phi(x) - V.D_x \phi(y) \rangle \geq 0,
\end{equation}
we obtain the uniqueness of the associated monotone solutions. This concept of solution depending on $\phi$ can be defined (in the stationary case) by 
\begin{Def}\label{defmongen}
A function $U \in \mathcal{C}(\mathbb{O}_d, \mathbb{R}^d)$ is said to be a $\phi$-monotone solution of (\ref{sme}) if for any $V \in \mathbb{R}^d, y \in \phi(\mathbb{O}_d)$, $R>0$ sufficiently large and $x_0$ a point of strict minimum of  $\psi_{V,y} : x \to \langle U(x) - V, \phi(x) - y\rangle$ in $B^1_R$, the following holds
\begin{equation}\label{smon}
\begin{aligned}
r\langle U(x_0), \phi(x_0) - y\rangle  \geq \langle G(x_0, U(x_0)), x_0 - y\rangle + \langle F(x_0,U(x_0)), (U(x_0)- V)D_x\phi(x_0) \rangle.
\end{aligned}
\end{equation}
\end{Def}

\section{The master equation for mean field games with optimal stopping}\label{sectstop}
 This section introduces the formulation of the master equation modeling a MFG in which the players have the possibility to leave the game. As already mentioned in the introduction, let us recall that MFG of optimal stopping have been the subject of several works but that the case of the master equation for such MFG has not been treated up to now. 
 
 In this section we are interested with a MFG, similar to the ones represented by (\ref{me}) and (\ref{sme}), except for the fact that the players are allowed to leave the game whenever they decide by paying a certain exit cost. Moreover, once they have left the game, they do not interact anymore with the other players. We refer to \citep{bertucci2018optimal} for more details on this kind of MFG.
 
 Following \citep{bertucci2018optimal} we study first a penalized version of the game, and then show how we can pass to the limit.
 
 \subsection{The penalized master equation}
 In the penalized version of the MFG of optimal stopping, the players cannot decide to leave instantly the game, they can only control the intensity of a Poisson process which give their exit time, and the intensity of this process is bounded by $\epsilon^{-1}$ for $\epsilon > 0$. Even though we do not want to enter into the precise formulation of this penalized game, let us insist on the fact that those aforementioned Poisson processes are supposed to be independent from one player to the other. The penalized master equation is then of the following form in the time dependent setting.
\begin{equation}\label{stopme}
\begin{aligned}
\partial_t U + \frac{1}{\epsilon}\beta(U) + \left(\frac{1}{\epsilon}\beta'(U)*x + F(x,U)\right)&\cdot \nabla_xU + \\
 + \lambda (U - T^*U(t,Tx))= G(x,U) &\text{ in } (0,\infty) \times \mathbb{O}_d,\\
U(0,x) = U_0(x) &\text{ in } \mathbb{O}_d,
\end{aligned}
\end{equation}
where $\beta$ stands for the positive part. We recall that $*$ stands for the term by term product and that $\beta(U)$ is understood component wise. Clearly $\beta'$ is not well defined but we shall come back on this technicality later. In the stationary setting, the form of the penalized master equation is then 
\begin{equation}\label{stopsme}
r U + \frac{1}{\epsilon}\beta(U) + \left(\frac{1}{\epsilon}\beta'(U)*x + F(x,U)\right)\cdot \nabla_xU + \lambda (U - T^*U(t,Tx))= G(x,U) \text{ in }  \mathbb{O}_d.
\end{equation}
 In the case $\lambda = 0$, the form of the characteristics of the previous equations is given by
 \begin{equation}\label{charstop}
\begin{cases}
\dot{V}(t) = G(y(t),V(t)) - \frac{1}{\epsilon}\beta(V),\\
\dot{y}(t) = F(y(t),V(t)) + \frac{1}{\epsilon}\beta'(V)*y.
\end{cases}
\end{equation}
This type of characteristics is clearly what we expect from the study in \citep{bertucci2018optimal}.

In (\ref{stopme})-(\ref{stopsme}), the exit cost paid by the players to leave the game is $0$. The case in which the exit cost depends on the state of the player which is leaving the game but not on the density of all the players can easily be treated in a similar fashion. The case in which the exit cost depends on the density of players is much more involved as structural assumptions have to be made on the form of the exit cost to ensure the propagation of monotonicity. We refer to \citep{bertucci2018optimal} for more detailed on this topic and we leave this case for future research.

For the rest of this study we focus on the stationary case and we mention the time dependent setting at the end of this part on optimal stoping.

\subsection{Results on the stationary penalized equation}
This section presents three, quite simple, results on the penalized master equation which enable us to pass to the limit $\epsilon \to 0$ in the next section. The first result is concerned with the analogue of proposition \ref{uniq2} for (\ref{stopsme}). The second result is a, uniform in $\epsilon$, a priori estimate for the solution of (\ref{stopsme}). The third result is concerned with monotone solutions of (\ref{stopsme}).
\begin{Prop}
Under hypotheses \ref{stab}-\ref{grow}, there exists at most one smooth solution $U$ of (\ref{stopsme}) such that $\beta'(U)$ is understand as being a function which satisfies
\begin{equation}
\beta'(U^i(x)) \in \partial\beta (U^i(x)),
\end{equation}
where $\partial \beta$ is the subdifferential of $\beta$. If it exists, this solution is monotone.
\end{Prop}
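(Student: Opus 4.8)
The plan is to transcribe the proof of Proposition~\ref{uniq2}, the only genuinely new input being the algebra of the penalization terms. Let $U$ and $V$ be two smooth solutions of (\ref{stopsme}), each carrying a selection $\beta'(U^i)(x)\in\partial\beta(U^i(x))$ and $\beta'(V^i)(y)\in\partial\beta(V^i(y))$, and set $W(x,y)=\langle U(x)-V(y),x-y\rangle$ on $\mathbb{O}_d^2$. Writing the two drifts $b_U(x)=\tfrac1\epsilon\beta'(U)(x)*x+F(x,U(x))$ and $b_V(y)=\tfrac1\epsilon\beta'(V)(y)*y+F(y,V(y))$, and using $\nabla_xW=(D_xU(x))^{T}(x-y)+U(x)-V(y)$ together with the analogous formula for $\nabla_yW$, I would contract (\ref{stopsme}) for $U$ against $x-y$ and for $V$ against $y-x$ and add, obtaining, exactly as in Proposition~\ref{uniq2},
\[
\begin{aligned}
rW+b_U(x)\cdot\nabla_xW+b_V(y)\cdot\nabla_yW+\lambda\big(W-W(Tx,Ty)\big)
&=\langle G(x,U(x))-G(y,V(y)),x-y\rangle\\
&\quad+\langle F(x,U(x))-F(y,V(y)),U(x)-V(y)\rangle+\mathcal{E}(x,y),
\end{aligned}
\]
where $\mathcal{E}(x,y)=\tfrac1\epsilon\sum_{i=1}^d\big\{\,[\beta'(U^i)(x)\,x^i-\beta'(V^i)(y)\,y^i]\,(U^i(x)-V^i(y))-[\beta(U^i(x))-\beta(V^i(y))]\,(x^i-y^i)\,\big\}$ collects all the terms produced by the penalization (the $\lambda$ term reassembles because $\langle T^*U(Tx),x-y\rangle=\langle U(Tx),Tx-Ty\rangle$).

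The first two terms on the right are $\ge 0$ by Hypothesis~\ref{mon}, as in Proposition~\ref{uniq2}. For $\mathcal{E}$, I would fix $i$ and abbreviate $a=U^i(x)$, $\bar a=V^i(y)$, $p=\beta'(U^i)(x)\in\partial\beta(a)$, $q=\beta'(V^i)(y)\in\partial\beta(\bar a)$, $s=x^i\ge0$, $t=y^i\ge0$. Rearranging, the $i$-th summand of $\epsilon\,\mathcal{E}$ equals
\[
s\big(p(a-\bar a)-(\beta(a)-\beta(\bar a))\big)+t\big((\beta(a)-\beta(\bar a))-q(a-\bar a)\big).
\]
Since $\beta=(\cdot)^+$ is convex, the subgradient inequalities $\beta(\bar a)\ge\beta(a)+p(\bar a-a)$ and $\beta(a)\ge\beta(\bar a)+q(a-\bar a)$ force both parentheses to be $\ge0$; multiplying by $s\ge0$ and $t\ge0$ respectively — here it is crucial that we work on $\mathbb{O}_d$ — shows $\mathcal{E}\ge0$, hence the whole right-hand side is $\ge0$ for any admissible choice of the selections.

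It then remains to run the maximum principle. The perturbed drifts still satisfy what Lemma~\ref{max2} needs: the extra component $\tfrac1\epsilon\beta'(U^i)(x)x^i$ vanishes when $x^i=0$, so $\mathbb{O}_d$ stays invariant under Hypothesis~\ref{stab}; and $\beta'\ge0$, $x^i\ge0$ give $\sum_i\big(\tfrac1\epsilon\beta'(U^i)(x)x^i+F^i(x,U(x))\big)\ge\sum_iF^i(x,U(x))$, so Hypothesis~\ref{grow} keeps the characteristics inside some $B^1_R$. Lemma~\ref{max2} then yields $W\ge0$ on $\mathbb{O}_d^2$, and, arguing exactly as at the end of the proof of Proposition~\ref{uniq2}, this gives $U=V$ and the monotonicity of $U$.

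The only real obstacle is the nonnegativity of $\mathcal{E}$; everything else is essentially a word-for-word copy of Proposition~\ref{uniq2}. I want to emphasise that this step genuinely exploits both the convexity of the positive part and the sign of the coordinates on $\mathbb{O}_d$: on the full space $\mathbb{R}^d$ the penalization contribution would not be signed. A minor bookkeeping point is to check that the fixed subdifferential selections may be used consistently throughout, which is automatic here because the bound on $\mathcal{E}$ holds for every $p\in\partial\beta(a)$ and $q\in\partial\beta(\bar a)$.
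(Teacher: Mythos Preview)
Your proof is correct and follows the same route as the paper: define $W(x,y)=\langle U(x)-V(y),x-y\rangle$, derive the transport-type inequality for $W$, check that the penalization contribution has the right sign, and invoke Lemma~\ref{max2}. The only cosmetic difference is in how the term $\mathcal{E}$ is bounded: the paper uses the identity $\beta'(u)\,u=\beta(u)$ specific to $\beta=(\cdot)_+$ to rewrite $\mathcal{E}$ as $\epsilon^{-1}\big(\langle x,\beta(V)-\beta'(U)*V\rangle+\langle y,\beta(U)-\beta'(V)*U\rangle\big)$ and then checks each bracket directly, whereas you use only the subgradient inequality for a convex $\beta$, which is slightly cleaner and would apply verbatim to any convex penalization.
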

\begin{proof}
The proof is very similar to the one of proposition \ref{uniq2}. Let us take $U$ and $V$ two smooth solutions. We introduce $W : \mathbb{O}_d^2 \to \mathbb{R}$ defined by :
\begin{equation}
W(x,y) = \langle U(x) - V(y), x- y\rangle.
\end{equation}
Let us remark that $W$ satisfies 
\begin{equation}
\begin{aligned}
r W + &\left( \frac{1}{\epsilon} \beta'(U(x))*x + F(x,U(x))\right)\cdot \nabla_x W + \left( \frac{1}{\epsilon} \beta'(V(y))*y + F(y,V(y))\right)\cdot \nabla_y W\\
 = &\langle G(x,U(x)) - G(y,V(y)),x-y\rangle + \langle F(x,U(x)) - F(y,V(y)), x - y\rangle +\\
&+ \frac{1}{\epsilon}\left(\langle \beta'U(x)*x - \beta'(V(y))*y, U(x) - V(y)\rangle - \langle \beta(U(x)) - \beta(V(y)), x - y \rangle\right),\\
= & \langle G(x,U(x)) - G(y,V(y)),x-y\rangle + \langle F(x,U(x)) - F(y,V(y)), x - y\rangle +\\
&+ \frac{1}{\epsilon}\left(\langle x, \beta(V(y)) - \beta'(U(x))V(y)\rangle + \langle y, \beta(U(x)) - \beta'(V(y))U(x)\rangle\right),\\
\geq &  \langle G(x,U(x)) - G(y,V(y)),x-y\rangle + \langle F(x,U(x)) - F(y,V(y)), x - y\rangle.
\end{aligned}
\end{equation}
The rest of the proof follows as the one of proposition \ref{uniq2}.
\end{proof}
\begin{Rem}
Two conclusions can be drawn from this result. The first one is that the terms arising from the modeling of optimal stopping only reinforce the monotonicity of the equation. The second one is that we can indeed use the notation $\beta'(\cdot)$ quite freely as it is justified by this uniqueness property.
\end{Rem}
\begin{Rem}
Let us briefly insist on the fact that the addition of the term $\beta'(U)*x$ in the dynamics does not alter the assumptions \ref{stab} and \ref{grow}.
\end{Rem}
The following result is the main argument why we are able to pass to the limit $\epsilon \to 0$ in (\ref{stopsme}).
\begin{Prop}\label{estimatestop}
Under the assumptions of proposition \ref{existstat}, there exists $C > 0$ independent of $ \epsilon$ such that if $U$ is the classical solution of (\ref{stopsme}), then 
\begin{equation}\label{estint}
\|\nabla_x U\| \leq C.
\end{equation}
\end{Prop}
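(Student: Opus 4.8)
The plan is to mimic the Bernstein-type gradient estimate from Proposition \ref{existstat} (itself borrowed from \citep{lions2007cours}), but now carefully tracking the extra penalization terms $\frac{1}{\epsilon}\beta(U)$ and $\frac{1}{\epsilon}\beta'(U)\ast x$ and showing that they only help. First I would introduce, for a smooth solution $U$ of (\ref{stopsme}), the auxiliary functions $W(x,\xi) = \langle U(x),\xi\rangle$ and $Z_{\beta,\gamma}(x,\xi) = \langle \nabla_x W(x,\xi),\xi\rangle - \bar\beta \|\nabla_x W(x,\xi)\|^2 + \bar\gamma|\xi|^2$ with \emph{constant} coefficients $\bar\beta,\bar\gamma$ (as the stationary setting of Proposition \ref{existstat} requires), and differentiate the equation twice in the manner of (\ref{Zedp}). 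The point is that the new drift term $\frac{1}{\epsilon}\beta'(U)\ast x$ is monotone in the appropriate sense (this is exactly the computation in the previous uniqueness proposition, where the $\frac{1}{\epsilon}$-terms produced a nonnegative contribution), so when we form the quantity governing $Z$ the penalization contributes terms with a favorable sign.

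The key steps, in order: (i) write out the PDE satisfied by $Z_{\bar\beta,\bar\gamma}$, isolating the contributions coming from $r U$, from $(F\cdot\nabla_x)U$, from $\lambda(U - T^\ast U(Tx))$ — all handled exactly as in Proposition \ref{existstat} under hypotheses \ref{stab}--\ref{strongstat} — and from the two new $\frac{1}{\epsilon}$ terms; (ii) check that after differentiation the extra terms split into a part proportional to $\frac{1}{\epsilon}\beta''(U)$, which is a nonnegative measure (as $\beta$ is convex), times squares, hence of the right sign, plus a part proportional to $\frac{1}{\epsilon}\beta'(U)$, which at an interior minimum of the relevant test function is controlled because $\beta'\in[0,1]$ and $x\ge 0$; (iii) choose $\bar\gamma = 0$ and $\bar\beta$ a suitable small positive constant (depending only on $r, F, G, \lambda, T$ through $\|D_xF\|, \|D_xG\|, \|T\|$ and the ellipticity constant $\alpha$ of hypothesis \ref{smooth}, and crucially \emph{not} on $\epsilon$) so that the right-hand side of the $Z$-equation is nonnegative; (iv) apply the stationary maximum principle (lemma \ref{max2} in the appendix, as used for Proposition \ref{uniq2}/\ref{existstat}) to conclude $Z_{\bar\beta,0}\ge 0$ everywhere, which gives $\langle\nabla_x W,\xi\rangle \ge \bar\beta\|\nabla_x W\|^2$, hence the bound $\|D_x U(x)\|\le \bar\beta^{-1} =: C$ uniformly in $\epsilon$; and (v) handle the non-smoothness of $\beta'$ either by working with the smooth approximation $\sigma$-regularization already present in the existence scheme of Proposition \ref{existme}, or by noting that the subdifferential convention $\beta'(U^i)\in\partial\beta(U^i)$ makes all the inequalities above hold in the a.e. sense.

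The main obstacle I expect is step (ii): showing the $\frac{1}{\epsilon}\beta'(U)\ast x$ drift term, once it is differentiated in $x$ to produce the coefficient of $Z$'s transport equation, does not generate a term of size $\frac{1}{\epsilon}$ with a bad sign. The derivative of $\beta'(U^i)x_i$ in $x_j$ produces $\beta''(U^i)\partial_j U^i \, x_i + \delta_{ij}\beta'(U^i)$; the first piece, when paired against $\nabla_x W$ in the expression for $Z$, should combine into something like $\frac{1}{\epsilon}\beta''(U)\ast x \ast |\nabla_x W|^2 \ge 0$ using $x\ge 0$ and convexity, while the diagonal piece $\frac{1}{\epsilon}\beta'(U)$ is a bounded (by $\epsilon^{-1}$, but with the right sign) zeroth-order contribution that, together with the genuinely positive $rU$ and the strong monotonicity from hypothesis \ref{smooth}, does not destroy positivity. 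Making this sign bookkeeping precise — in particular confirming that no cross term forces $\bar\beta$ to depend on $\epsilon$ — is the delicate point; everything else is a routine transcription of the argument of Proposition \ref{existstat}.
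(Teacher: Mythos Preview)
Your proposal is correct and follows essentially the same approach as the paper: both introduce the auxiliary functions $W(x,\xi)=\langle U(x),\xi\rangle$ and $Z=\langle\nabla_x W,\xi\rangle-\delta|\nabla_x W|^2+\gamma|\xi|^2$ with constant coefficients, derive the transport equation for $Z$, and observe that the $\epsilon^{-1}$-contributions on the right-hand side are nonnegative because $\beta$ is increasing and convex and $x\ge 0$, so the choice of constants (and hence the gradient bound) is independent of $\epsilon$. The paper handles the non-smoothness of $\beta$ exactly as you suggest in step (v), by approximating $\beta$ uniformly with smooth convex increasing functions.
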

\begin{proof}
This proof is similar to the one of proposition \ref{existstat}. Let us define $W$ and $Z$ by
\begin{equation}
W(x,\xi) = \langle U(x),\xi\rangle,
\end{equation}
\begin{equation}
Z(x,\xi) = \langle \nabla_x W (x, \xi), \xi\rangle - \delta |\nabla_x W|^2 + \gamma |\xi|^2,
\end{equation}
for some constants $\beta$ and $\gamma$. Let us remark that, arguing as if $\beta$ was a smooth function, $Z$ satisfies
\begin{equation}\label{Zedppenal}
\begin{aligned}
& r Z + \langle \epsilon^{-1}\beta'(U)*x + F(x,\nabla_{\xi} W), \nabla_x Z\rangle + \langle (\epsilon^{-1}\beta''(U)*x + D_p F (x,\nabla_{\xi} W )) \nabla_{\xi}Z , \nabla_{x} W\rangle - \\
 &- \langle D_p G(x, \nabla_{\xi}W)\nabla_{\xi}Z, \xi\rangle +  \lambda(Z- Z(t,Tx, T\xi ) )\\
& = \langle D_xG(x, \nabla_x W) \xi, \xi\rangle - \langle D_pG(x, \nabla_{\xi}W) \nabla_x W, \xi\rangle - \langle D_x F(x, \nabla_{\xi} W) \nabla_x W, \xi\rangle \\
&+ \langle D_p F(x, \nabla_{\xi}W) \nabla_x W, \nabla_x W\rangle + \langle \epsilon^{-1} \beta''(U)*x*\nabla_x W, \nabla_x W\rangle\\
&- 2 \delta \langle D_x G(x, \nabla_x W)\xi, \nabla_x W\rangle + 2 \delta \langle D_x F(x, \nabla_{\xi} W) \nabla_x W, \nabla_x W\rangle + 2 \delta \langle \beta'(U)*\nabla_x W, \nabla_x W\rangle\\
& + \delta \lambda \bigg{(} |\nabla_x W|^2 - 2 \langle \nabla_x W(Tx, T \xi ), T \nabla_x W\rangle +  | \nabla_x W(Tx, T \xi)|^2 \bigg{)} \\
& + r \delta |\nabla_x W|^2 + 2 \gamma(\langle D_p F (x,\nabla_{\xi} W ) \xi , \nabla_{x} W\rangle - \langle D_p G(x, \nabla_{\xi}W)\xi, \xi\rangle)\\
& -r\gamma |\xi|^2 + \lambda \gamma (|\xi|^2 - |T\xi|^2).
\end{aligned}
\end{equation}
Let us remark that, since $\beta$ is an increasing and convex function, all the terms in the right hand side involving $\epsilon$ are positive. Thus we conclude that there exists an a priori estimate independent of $\epsilon$. To remark that this fact immediately extend to the case $\beta(\cdot) = (\cdot)_+$, it suffices to realize that $\beta$ can approximate uniformly by smooth convex and increasing functions.
\end{proof}
As already mentioned in the previous section, existence of solution of a stationary master equation can be established from estimates such as (\ref{estint}) if some local boundness holds. We here give an assumption for which such a property can be proven.
\begin{hyp}\label{existstop}
For any $p\in \mathbb{R}^d$, $F(0,p) = 0$. Moreover there is a unique solution $V \in \mathbb{O}_d$ of
\begin{equation}
rV + \lambda (V - T^*V) = -G(0,-V).
\end{equation}
\end{hyp}
We believe this assumption to be quite mild as it only assumes that i) once the mass of players reaches $0$, it stays at $0$, ii) the MFG with a $0$ mass of player (which is thus an optimal control problem) is well defined and players remaining do not exit it. Let us remark that there can indeed still be player in the MFG, even if the mass of players is $0$. In such a case, the remaining players do not "see" each other.
\begin{Prop}\label{resume}
For $\epsilon > 0$, under the assumptions of proposition \ref{existstat} and hypothesis \ref{existstop}, 
 there exists a unique monotone solution $U_{\epsilon}$ of (\ref{stopsme}). It is a solution of (\ref{stopsme}) almost everywhere. The sequence $(U_{\epsilon})_{\epsilon > 0}$ is uniformly lipschitz continuous and $(U_{\epsilon}(0))_{\epsilon > 0}$ is a bounded sequence.
\end{Prop}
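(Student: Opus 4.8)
The plan is to treat the penalized equation (\ref{stopsme}) as a master equation of the form (\ref{sme}) associated with the modified coefficients $\tilde F_\epsilon(x,p):=F(x,p)+\tfrac{1}{\epsilon}\beta'(p)*x$ and $\tilde G_\epsilon(x,p):=G(x,p)-\tfrac{1}{\epsilon}\beta(p)$ (with the convention $\beta'(p^i)\in\partial\beta(p^i)$), and to verify that the results of Section~1 transfer to this pair. As observed in the remarks following the first proposition of this subsection, the extra drift $\tfrac{1}{\epsilon}\beta'(p)*x$ leaves hypotheses \ref{stab} and \ref{grow} untouched (it vanishes on $\{x^i=0\}$ and has nonnegative coordinate sum), and the computation carried out there shows that $(\tilde G_\epsilon,\tilde F_\epsilon)$ is again monotone, indeed strongly monotone whenever $(G,F)$ is. Since Theorem \ref{uniqstat} only requires hypotheses \ref{stab}--\ref{mon}, it applies to $(\tilde G_\epsilon,\tilde F_\epsilon)$ and yields at once the uniqueness of a continuous monotone solution of (\ref{stopsme}) and the fact that, if it exists, it is a monotone map; it remains to construct it and to establish the uniform bounds.

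For existence I would reproduce the Bernstein and vanishing-viscosity scheme of Propositions \ref{existstat} and \ref{existme}: approximate $\beta$ by smooth convex increasing functions $\beta_\eta$, add the degenerate elliptic term $-\epsilon'\big(\sum_j\sigma(x_j)\partial_{jj}U^i+\sigma'(x_i)\partial_iU^i\big)$ with $\sigma(s)=s^2$ near $0$, solve the resulting strongly monotone, degenerate elliptic problem by the classical theory invoked in Proposition \ref{existme}, and let $\eta,\epsilon'\to0$. The a priori gradient bound on these approximations is the one of Proposition \ref{estimatestop}, and it is uniform in $\epsilon$: in the equation for the auxiliary function $Z$, the terms carrying a factor $\epsilon^{-1}$ in the source, namely $\langle\epsilon^{-1}\beta''(U)*x*\nabla_xW,\nabla_xW\rangle$ and $2\delta\langle\beta'(U)*\nabla_xW,\nabla_xW\rangle$, have a favorable sign, so they merely strengthen the estimate. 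Passing to the limit and invoking Proposition \ref{stabs} produces a Lipschitz function $U_\epsilon$ which solves (\ref{stopsme}) almost everywhere and is a monotone solution in the sense of Definition \ref{defmons}, with Lipschitz constant the $\epsilon$-independent constant $C$ of Proposition \ref{estimatestop} --- this is the claimed uniform Lipschitz continuity.

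To close the limit passage one needs, by Remark \ref{rembound}, a local $L^\infty$ bound on the approximations, and this is where hypothesis \ref{existstop} enters, simultaneously giving the boundedness of $(U_\epsilon(0))_\epsilon$. The key point is that $\{0\}$ is absorbing for the dynamics: since $F(0,\cdot)=0$, $\beta'(p)*0=0$, $T0=0$ and $\sigma(0)=\sigma'(0)=0$, evaluating (the approximations of) (\ref{stopsme}) at $x=0$ annihilates all first- and second-order terms, so that $p:=U_\epsilon(0)$ must solve the finite-dimensional equation $rp+\tfrac{1}{\epsilon}\beta(p)+\lambda(p-T^*p)=G(0,p)$. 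Since $V\in\mathbb{O}_d$ we have $\beta(-V)=0$, hence the vector $-V$ of hypothesis \ref{existstop} solves this equation for every $\epsilon$; combining the uniqueness statement in hypothesis \ref{existstop} with the sign structure of $\beta$ and the monotonicity that characterizes the monotone solution, one identifies $U_\epsilon(0)=-V$, a value independent of $\epsilon$. Together with $\|\nabla_xU_\epsilon\|\le C$ this gives $\|U_\epsilon\|_{L^\infty(B^1_R)}\le|V|+CR$ uniformly in $\epsilon$, which is both the local boundedness required for the existence argument and the boundedness of $(U_\epsilon(0))_\epsilon$.

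I expect the genuinely delicate step to be this last identification of the value at the empty state --- ruling out, for the approximate problems, spurious solutions $p$ of the decoupled equation at $x=0$ having small positive components of size $O(\epsilon)$, checking that the selected branch is compatible with the monotonicity requirement, and verifying stability under the $\eta,\epsilon'\to0$ passage. Everything else is a transcription of the arguments already established in Section~1 and in the preceding propositions of this section.
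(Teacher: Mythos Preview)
Your proposal is correct and is precisely an explicit unfolding of what the paper means by its one-line proof, ``follows exactly the argument of the previous part.'' You have identified the right ingredients in the right order: the preservation of hypotheses \ref{stab}--\ref{mon} under the penalization (established in the first proposition of this subsection), the $\epsilon$-independent gradient bound of Proposition \ref{estimatestop}, the vanishing-viscosity scheme of Proposition \ref{existme}, and the role of hypothesis \ref{existstop} in anchoring the value at the empty state $x=0$ (using that $T$ is linear so $T0=0$, and that $-V\leq 0$ kills the $\beta$-term). Your honest flagging of the identification $U_\epsilon(0)=-V$ as the only point requiring care is also accurate; the paper does not spell this out either.
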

The proof of this result follows exactly the argument of the previous part.

\subsection{The limit master equation}
We show in this section how we can characterize the value function of a MFG of optimal stopping using the notion of monotone solutions. The main idea consists in characterizing the limit of the sequence $(U_{\epsilon})_{\epsilon > 0}$ of solutions of (\ref{stopsme}) when $\epsilon \to 0$. Let us recall that, because $U_{\epsilon}$ is a monotone solution of (\ref{stopsme}), for any $V \in \mathbb{R}^d, y \in \mathbb{O}_d$ and $x$ point of strict local minimum of $\phi_{V,y} : x \to \langle U_{\epsilon}(x) - V, x - y\rangle$, the following holds.

\begin{equation}
\begin{aligned}
r \langle U_{\epsilon} &(x), x - y \rangle + \lambda \langle U_{\epsilon}(x) - T^*U_{\epsilon}(Tx),x - y\rangle\\
 \geq &\langle G(x,U_{\epsilon}(x)), x- y \rangle + \langle F(x,U_{\epsilon}(x)), U_{\epsilon}(x) - V \rangle +\\
 &+ \frac{1}{\epsilon}\left( \langle \beta'(U_{\epsilon})*x, U_{\epsilon}(x) - V\rangle - \langle \beta(U_{\epsilon}(x)), x - y \rangle \right),\\
= & \langle G(x,U_{\epsilon}(x)), x- y \rangle + \langle F(x,U_{\epsilon}(x)), U_{\epsilon}(x) - V \rangle +\\
 &- \frac{1}{\epsilon}\left( \langle \beta'(U_{\epsilon})*x, V\rangle + \langle \beta(U_{\epsilon}(x)), y \rangle \right).\\
\end{aligned}
\end{equation}
From this computation, we deduce that if $V \in \mathbb{R}^d$ is such that $V \leq 0$, then for any $y \in \mathbb{O}_d$ and $x$ point of strict local minimum of $\phi_{V,y} : x \to \langle U_{\epsilon}(x) - V, x - y\rangle$, we obtain that
\begin{equation}\label{remstop}
\begin{aligned}
r &\langle U_{\epsilon} (x), x - y \rangle + \lambda \langle U_{\epsilon}(x) - T^*U_{\epsilon}(Tx),x - y\rangle \geq \\
&\langle G(x,U_{\epsilon}(x)), x- y \rangle + \langle F(x,U_{\epsilon}(x)), U_{\epsilon}(x) - V \rangle.
\end{aligned}
\end{equation}
As we clearly expect that the limit of $(U_{\epsilon})_{\epsilon> 0} $ (if it exists) is negative, this leads us to the following definition.

\begin{Def}\label{defmonstop}
A function $U \in \mathcal{C}(\mathbb{O}_d, \mathbb{R}^d)$ is said to be a monotone solution of the master equation for the MFG of optimal stopping if 
\begin{itemize}
\item $U \leq 0$,
\item for any $V \in \mathbb{R}^d$ such that $V \leq 0$, for any $y \in \mathbb{O}_d$, $R>0$ sufficiently large and $x_0$ a point of strict minimum of  $\phi_{V,y} : x \to \langle U(x) - V, x - y\rangle$ in $B^1_R$, the following holds
\begin{equation}
\begin{aligned}
r &\langle U (x_0), x_0 - y \rangle + \lambda \langle U(x_0) - T^*U(Tx_0),x_0 - y\rangle \geq \\
&\langle G(x_0,U(x_0)), x_0- y \rangle + \langle F(x_0,U(x_0)), U(x_0) - V \rangle.
\end{aligned}
\end{equation}
\end{itemize}
\end{Def}
Let us insist that the only thing which differs from the non-optimal stopping case is that $U$ has to be negative component-wise and that we only have information for $V$ which are also negative component wise. The existence of such a solution is stated in the next result.

\begin{Theorem}
Under the assumption of proposition \ref{existstat} and hypothesis \ref{existstop}, there exists a monotone solution $U$ of the MFG of optimal stopping in the sense of definition \ref{defmonstop}.
\end{Theorem}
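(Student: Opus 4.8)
The plan is to obtain the monotone solution $U$ of the optimal stopping master equation as the uniform limit of the penalized solutions $U_\epsilon$ constructed in Proposition \ref{resume}, and then invoke the computations already recorded in \eqref{remstop} together with a stability argument in the spirit of Proposition \ref{stabs}. First I would recall from Proposition \ref{resume} that for each $\epsilon>0$ there is a unique monotone solution $U_\epsilon$ of \eqref{stopsme}, that the family $(U_\epsilon)_{\epsilon>0}$ is uniformly Lipschitz continuous, and that $(U_\epsilon(0))_{\epsilon>0}$ is bounded. Combining the uniform Lipschitz bound with the uniform bound at $0$, the functions $U_\epsilon$ are uniformly bounded on every ball $B^1_R$; since they are also equi-Lipschitz, Arzel\`a--Ascoli gives a subsequence converging uniformly on compact subsets of $\mathbb{O}_d$ to some $U\in\mathcal{C}(\mathbb{O}_d,\mathbb{R}^d)$, which is again Lipschitz.

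Next I would check the sign condition $U\le 0$. For this one uses the penalized equation: at any point $x$ in the interior of $\mathbb{O}_d$ where a component $U_\epsilon^i$ attains a positive value that is a local maximum of that component, the term $\frac{1}{\epsilon}\beta(U_\epsilon^i)$ is of order $\epsilon^{-1}$ while the remaining terms are bounded uniformly in $\epsilon$ (using the Lipschitz bound and the boundedness of $F,G$ on the relevant compact set), which forces $U_\epsilon^i$ to be at most $O(\epsilon)$ there; letting $\epsilon\to 0$ yields $U^i\le 0$. Alternatively, and more cleanly, one can argue directly on the characteristics \eqref{charstop}, or observe that $\beta(U_\epsilon)$ must stay $O(\epsilon)$ in $L^\infty_{loc}$ because otherwise the equation \eqref{stopsme} could not balance, and pass to the limit. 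Either way the outcome is $U\le 0$ and $\beta(U_\epsilon)\to 0$ locally uniformly.

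Then I would verify the inequality in Definition \ref{defmonstop}. Fix $V\in\mathbb{R}^d$ with $V\le 0$, $y\in\mathbb{O}_d$, $R>0$ large (so that Hypothesis \ref{grow} applies), and let $x_0$ be a point of strict minimum of $\phi_{V,y}:x\mapsto\langle U(x)-V,x-y\rangle$ on $B^1_R$. Exactly as in the proof of Proposition \ref{stabs}, I would use Lemma \ref{stegall} to produce perturbations $a_n\to 0$ such that $x\mapsto\langle U_{\epsilon_n}(x)-V,x-y\rangle+\langle a_n,x\rangle$ has a strict minimum at some $x_n\in B^1_R$, and then show $x_n\to x_0$ using the uniform convergence $U_{\epsilon_n}\to U$ and strictness of the minimum of $\phi_{V,y}$. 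Since $V\le 0$, the key inequality \eqref{remstop} holds at $x_n$ (with $V$ replaced by $V-a_n$, after absorbing the $a_n$ into the $F$ term as in Theorem \ref{uniqstat}), namely
\begin{equation*}
\begin{aligned}
r\langle U_{\epsilon_n}(x_n),x_n-y\rangle &+\lambda\langle U_{\epsilon_n}(x_n)-T^*U_{\epsilon_n}(Tx_n),x_n-y\rangle\ge\\
&\langle G(x_n,U_{\epsilon_n}(x_n)),x_n-y\rangle+\langle F(x_n,U_{\epsilon_n}(x_n)),U_{\epsilon_n}(x_n)-V+a_n\rangle.
\end{aligned}
\end{equation*}
Passing to the limit $n\to\infty$, using the continuity of $F$ and $G$, the uniform convergence of $U_{\epsilon_n}$ (including at the point $Tx_n\to Tx_0$, which lies in $B^1_R$ by Hypothesis \ref{grow}), and $a_n\to 0$, gives precisely the inequality required in Definition \ref{defmonstop}. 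Together with $U\le 0$ this shows $U$ is a monotone solution of the optimal stopping master equation.

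The main obstacle is the derivation of the sign condition $U\le 0$ and, relatedly, controlling the singular term $\frac{1}{\epsilon}\beta'(U_\epsilon)\ast x\cdot\nabla_x U_\epsilon$ uniformly in $\epsilon$ well enough to legitimately pass to the limit: the inequality \eqref{remstop} only discards the bad $\epsilon^{-1}$ terms because $V\le 0$ and $y\ge 0$ make $\langle\beta'(U_\epsilon)\ast x,V\rangle\le 0$ and $\langle\beta(U_\epsilon(x)),y\rangle\ge 0$, so one must be careful that this sign bookkeeping survives the Stegall perturbation (the perturbed test direction is $V-a_n$, which need not be $\le 0$). I would handle this by noting that the offending extra contribution is $\langle\beta'(U_{\epsilon_n})\ast x_n,a_n\rangle=O(|a_n|)\to 0$ since $\beta'(U_{\epsilon_n})\in[0,1]$ componentwise and $x_n$ stays bounded, so it is harmless in the limit; this is the one place where the specific structure of $\beta$ (bounded subdifferential) is used, and it is exactly what makes the argument go through.
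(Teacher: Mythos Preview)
Your overall strategy---extract a uniform limit $U$ of the $U_\epsilon$ via Arzel\`a--Ascoli, prove $U\le 0$ by a maximum-principle argument on the penalized equation, and then pass to the limit in \eqref{remstop} using a Stegall perturbation as in Proposition~\ref{stabs}---is exactly the paper's.

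The one genuine gap is in your final paragraph. The ``offending extra contribution'' is not $\langle\beta'(U_{\epsilon_n})\ast x_n,a_n\rangle$ but $\tfrac{1}{\epsilon_n}\langle\beta'(U_{\epsilon_n})\ast x_n,a_n\rangle$: the drift term $\beta'(U_\epsilon)\ast x$ in \eqref{stopsme} carries the prefactor $\epsilon^{-1}$, and this factor survives in the monotone-solution inequality for $U_{\epsilon_n}$. Consequently your bound is really $O(|a_n|/\epsilon_n)$, not $O(|a_n|)$, and it does not go to zero without further care. Your displayed inequality for $U_{\epsilon_n}$ at $x_n$ is therefore not justified as written.

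There are two clean fixes. One is to exploit the density in Lemma~\ref{stegall} to choose $|a_n|\le\epsilon_n^2$, so that $|a_n|/\epsilon_n\to 0$. The other, which is what the paper does, is to note that since $V\le 0$ and the open positive orthant $\{a>0\}$ is nonempty and open, the density in Lemma~\ref{stegall} allows you to choose each $a_n$ with $a_n\ge 0$ componentwise; then $V-a_n\le V\le 0$, the hypothesis of \eqref{remstop} is met exactly, and no error term appears at all. With either correction your argument is complete and matches the paper's.
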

\begin{proof}
For $\epsilon > 0$ we denote by $U_{\epsilon}$ the unique monotone solution of (\ref{stopsme}). From proposition \ref{resume}, we know that, extracting a subsequence if necessary, $(U_{\epsilon})_{\epsilon> 0}$ converges uniformly toward a function $U$. By considering maxima of $U_{\epsilon}$ on $B^1_R$ for $R > 0$ sufficiently large and $\epsilon > 0$, we immediately obtain from maximum like result (thanks to hypothesis \ref{grow}) that $U_{\epsilon}$ is bounded from above by $C \epsilon$ on $B^1_R$, where $C > 0 $ is a constant which depends on $R$ but not on $\epsilon$. Hence we deduce that $U \leq 0$. The rest of the proof follows as in the proof of proposition \ref{propstab} thanks to (\ref{remstop}). Hence we do not detail the rest of the proof. Let us only mention that thanks to lemma \ref{stegall}, using the notations of the proof of proposition \ref{propstab}, we can choose $(a_n)_{n \in \mathbb{N}}$ such that $V - a_n \leq 0$ for all $n \geq 0$.
\end{proof}
We now present a result of uniqueness for this type of solution.
\begin{Theorem}
Under the hypotheses \ref{stab}-\ref{mon}, there exists at most one monotone solution of the MFG of optimal stopping in the sense of definition \ref{defmonstop}.
\end{Theorem}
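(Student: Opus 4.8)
The plan is to run the scheme of the proof of Theorem~\ref{uniqstat} for equation~(\ref{sme}), the only genuinely new point being that the perturbation vectors produced by Stegall's lemma must be kept componentwise nonnegative so that the shifted test vectors remain admissible in Definition~\ref{defmonstop}. So let $U$ and $V$ be two monotone solutions of the master equation for the MFG of optimal stopping; in particular $U\leq 0$ and $V\leq 0$. I would set
\begin{equation*}
W(x,y)=\langle U(x)-V(y),x-y\rangle ,\qquad (x,y)\in\mathbb{O}_d^2,
\end{equation*}
fix $R>0$ large enough (as in Definition~\ref{defmonstop}) so that $T(B^1_R)\subset B^1_R$ by Hypothesis~\ref{grow}, and aim at proving $W\geq 0$ on $(B^1_R)^2$; letting $R$ grow then gives $W\geq 0$ on $\mathbb{O}_d^2$, which as in the proof of Proposition~\ref{uniq2} forces $U=V$.

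Fix $\epsilon>0$. I would apply Lemma~\ref{stegall} to the continuous (hence weakly sequentially lower semicontinuous) map $(x,y)\mapsto W(x,y)$ on the compact set $(B^1_R)^2\subset\mathbb{R}^{2d}$. Since the set of good perturbation vectors is dense in $\mathbb{R}^{2d}$, it meets the nonempty open cube $(0,\epsilon)^d\times(0,\epsilon)^d$, so one can pick $(a,b)$ with $a\geq 0$, $b\geq 0$ componentwise and $|a|,|b|$ as small as we wish such that $(x,y)\mapsto\langle U(x)-V(y),x-y\rangle+\langle a,x\rangle+\langle b,y\rangle$ has a strict minimum on $(B^1_R)^2$, attained at some $(x_0,y_0)$. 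Because $a\geq 0$ and $V\leq 0$ one has $V(y_0)-a\leq 0$, and similarly $U(x_0)-b\leq 0$, so both are admissible test vectors. Fixing $y=y_0$, the point $x_0$ is a strict minimum of $x\mapsto\langle U(x)-(V(y_0)-a),x-y_0\rangle$ on $B^1_R$, so the inequality of Definition~\ref{defmonstop} for $U$ applies with test vector $V(y_0)-a$ and reference point $y_0$; symmetrically for $V$ with test vector $U(x_0)-b$ and reference point $x_0$.

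Summing the two inequalities and using the monotonicity of $(G,F)$ from Hypothesis~\ref{mon} exactly as in Theorem~\ref{uniqstat}, the $G$- and $F$-difference terms cancel and one is left with
\begin{equation*}
rW(x_0,y_0)+\lambda\big(W(x_0,y_0)-W(Tx_0,Ty_0)\big)\geq\langle F(x_0,U(x_0)),a\rangle+\langle F(y_0,V(y_0)),b\rangle .
\end{equation*}
Since $(x_0,y_0)$ minimizes the perturbed functional and $(Tx_0,Ty_0)\in(B^1_R)^2$, comparing the two points gives $W(x_0,y_0)-W(Tx_0,Ty_0)\leq\langle a,Tx_0-x_0\rangle+\langle b,Ty_0-y_0\rangle$, whence
\begin{equation*}
rW(x_0,y_0)\geq\langle F(x_0,U(x_0)),a\rangle+\langle F(y_0,V(y_0)),b\rangle+\lambda\big(\langle a,x_0-Tx_0\rangle+\langle b,y_0-Ty_0\rangle\big).
\end{equation*}
All quantities on the right live on the bounded set $B^1_R$ (with $U,V,F$ continuous there), so the right-hand side is bounded below by $-C\epsilon$ with $C=C(R,F,\lambda)$ independent of $\epsilon$, giving $W(x_0,y_0)\geq-C\epsilon/r$. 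As $(x_0,y_0)$ also nearly minimizes $W$ itself (the perturbation has size $O(\epsilon)$ on the bounded set $(B^1_R)^2$), this yields $\inf_{(B^1_R)^2}W\geq-C'\epsilon$, and letting $\epsilon\to 0$ gives $W\geq 0$ on $(B^1_R)^2$, hence on $\mathbb{O}_d^2$. Finally, for $x$ in the interior of $\mathbb{O}_d$ and $\xi\in\mathbb{R}^d$, $0\leq W(x,x+t\xi)=-t\langle U(x)-V(x+t\xi),\xi\rangle$ for small $t>0$; dividing by $t$, letting $t\to 0^+$ and using $\pm\xi$ gives $U(x)=V(x)$, and then $U\equiv V$ on $\mathbb{O}_d$ by continuity.

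\emph{Expected main obstacle.} The only step departing from Theorem~\ref{uniqstat} is arranging that the Stegall perturbation $(a,b)$ be componentwise nonnegative, so that $V(y_0)-a\leq 0$ and $U(x_0)-b\leq 0$ remain in the admissible class of Definition~\ref{defmonstop}; this is settled by noting that the "good" set from Lemma~\ref{stegall} is dense and hence meets the positive orthant arbitrarily close to the origin. Everything else is a transcription of the uniqueness argument already carried out for~(\ref{sme}); the remaining care is purely in checking that the constants in the $O(\epsilon)$ bounds are genuinely $\epsilon$-independent, which holds because all points and function values involved stay in the compact set $B^1_R$.
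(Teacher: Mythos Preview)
Your proposal is correct and follows essentially the same route as the paper's own proof, which simply refers back to Theorem~\ref{uniqstat} and observes that, by the density statement in Lemma~\ref{stegall}, the Stegall perturbations $a,b$ can be taken so that $V(y_0)-a\leq 0$ and $U(x_0)-b\leq 0$. Your explicit choice of $(a,b)\in(0,\epsilon)^d\times(0,\epsilon)^d$ is exactly what makes this work, and the remaining steps (summing the two inequalities, handling the $\lambda$-term via the minimality of $(x_0,y_0)$, and passing to $\epsilon\to 0$) reproduce the argument of Theorem~\ref{uniqstat} verbatim.
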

\begin{proof}
The proof of this statement follows exactly the one of theorem \ref{uniqstat} by remarking that thanks to lemma \ref{stegall}, using the notation of the aforementioned proof, $a$ and $b$ can be chosen such that $V - a \leq 0$ and $U - b \leq 0$.
\end{proof}

\subsection{Comments on this notion of solution in the optimal stopping case}
In this section we want to discuss how the knowledge of the value function of the MFG is helpful to understand the behavior of the population of agents.

First, we expect that when no player is leaving the game, $U$ solves a certain PDE, which is the master equation characterizing the MFG without optimal stopping. The set which corresponds to the fact that no player is leaving is the set $\cap_{i =1}^d \{ U^i < 0\}$. The fact that $U$ satisfies this property can be obtained by two arguments. First, from the PDE satisfied by $U_{\epsilon}$ for $\epsilon > 0$ and then by passing to the limit $\epsilon \to 0$. Secondly by remarking that if $U^i(x) < 0$ for all $i$ and $D_xU(x) > 0$ in the order of positive definite matrix, then the proof of proposition \ref{consist} can be easily adapted to show that $U$ indeed satisfies
\begin{equation}
r U^i + (F(x,U)\cdot \nabla_x)U^i + \lambda \left(U^i - (T^*U(t,Tx))^i\right)= G^i(x,U) \text{ in } \{x_i > 0\}\cap \{ U < 0 \},
\end{equation}
\begin{equation}
r U^i + (F(x,U)\cdot \nabla_x)U^i + \lambda \left(U^i - (T^*U(t,Tx))^i\right)\leq G^i(x,U) \text{ in } \{x_i =0\} \cap \{ U < 0\}.
\end{equation}
Thus in the set $\{U < 0\}$, we can infer the evolution of the population of agents as in the case without optimal stopping.

On the other hand, when players are actually leaving the game, we would like to gain information on how they are leaving the game. Although the question of describing the precise evolution of the population is not the central question of this paper, we indicate formally what happens for the density of players. We refer to \citep{bertucci2018optimal} for more details on this question in the deterministic case. Let us consider a distribution of players $x \in \mathbb{O}_d$ such that it is optimal for some players to leave the game. For this to happen, one must have  $ I(x)\ne \emptyset $ where $I$ is defined by 
\begin{equation}\label{defI}
I(x) = \{ i \in \{1,...,d\} | U^i(x) = 0 \}.
\end{equation}
A natural requirement could be to expect that starting from $x$, the density of players should instantly become $\tilde{x}$ defined by
\begin{equation}\label{pure}
\begin{cases}
\tilde{x}^i = x^i \text{ for } i \notin I(x),\\
\tilde{x}^i = 0 \text{ for } i \in I(x).
\end{cases}
\end{equation}
This type of behavior correspond to considering only symmetric Nash equilibria in pure strategies. From \citep{bertucci2018optimal}, we know that we have to consider Nash equilibria in mixed strategies and that players can play a certain leaving rate. Hence, even though some players are leaving the game in the state $i \in I(x)$, this does not necessary mean that $\tilde{x}^i = 0$. However, because some players are leaving and some are staying in this situation, there is a way to determine $\tilde{x}^i$. Following \citep{bertucci2018optimal}, we expect that $\tilde{x}$ is characterized by
\begin{equation}\label{mixed}
\begin{cases}
\tilde{x}^i = x^i \text{ for } i \notin I(x),\\
U(\tilde{x}) = U(x),\\
G^i (\tilde{x}, U(\tilde{x}))\tilde{x}^i = 0 \text{ for } i \in I(x).
\end{cases}
\end{equation} 
The last line of (\ref{mixed}) stands for the fact that either all the players in state $i$ have left the game, or either it is also optimal to stay in the game and thus one must have $G^i (\tilde{x}, U(\tilde{x})) = 0$ (recall that the exit cost is $0$). Finally, let us remark that when there is a strict monotonicity assumption on $G$, for any $x \in \mathbb{O}_d$, there is at most one $\tilde{x}$ solution of (\ref{mixed}).

\subsection{The time dependent case}
As we already mentioned above, we are not going to treat in full details the time dependent case. However we indicate the natural generalizations of the results above for this problem. The definition of solution of the problem is straightforward from definitions \ref{defms} and \ref{defmonstop}. It is given by
\begin{Def}\label{defmstopt}
A function $U : (0,\infty)\times \mathbb{O}_d \to \mathbb{R}^d$ is a monotone solution of the time dependent MFG of optimal stopping if 
\begin{itemize}
\item $U \leq 0$,
\item for any $V \in \mathbb{R}^d, V \leq 0, y \in \mathbb{O}_d$ and $R> 0$ sufficiently large, for any $(t_0,x)\in (0,\infty) \times B^1_R$ such that $x$ is a point of strict minimum of $x \to \langle U(t_0,x) - V, x - y\rangle$ on $B^1_R$, for any smooth real function $\phi$ such that $\phi(t) < \langle U(t,x), x - y \rangle$ for $t \in (t_0 - \epsilon, t_0)$ for some $\epsilon > 0$ with $\phi(t_0) = \langle U(t_0,x), x - y \rangle$, the following holds :
\begin{equation}\label{testt}
\begin{aligned}
\frac{d}{dt} \phi(t_0) &+ \lambda \langle U(t_0,x_0) - T^*U(t_0,Tx_0),x_0 - y \rangle \geq \\
&\langle F(x,U(t_0,x)), U(t_0,x) - V\rangle + \langle G(x,U(t_0,x)), x - y \rangle.
\end{aligned}
\end{equation} 
\item \begin{equation}
U(0,x) = \min(U_0(x),0) \text{ on } \mathbb{O}_d.
\end{equation}
\end{itemize}
\end{Def}
The fact that the initial condition is only satisfied when $U_0$ is smaller than $0$ is classical feature of optimal stopping problem. The uniqueness of such solutions is a direct adaptation of the proof of theorem \ref{uniqt}. It can be summarizes as follows.
\begin{Theorem}
Under hypotheses \ref{stab} and \ref{mon}, for any continuous function $U_0$, there exists at most one continuous function solution of the time dependent MFG of optimal stopping in the sense of definition \ref{defmstopt}.
\end{Theorem}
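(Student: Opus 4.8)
The plan is to mimic the proof of Theorem \ref{uniqt}, incorporating the sign restrictions built into Definition \ref{defmstopt}, in the same way that the stationary optimal stopping uniqueness statement (the one stated after Theorem \ref{uniqstat} and Definition \ref{defmonstop}) is obtained from Theorem \ref{uniqstat}. Let $U$ and $V$ be two continuous monotone solutions of the time dependent MFG of optimal stopping; in particular $U\le 0$ and $V\le 0$ on $(0,\infty)\times\mathbb{O}_d$, and $U(0,\cdot)=V(0,\cdot)=\min(U_0,0)$. Set
\[
W(t,x,y)=\langle U(t,x)-V(t,y),x-y\rangle ,
\]
so that, as in Proposition \ref{uniq1}, it suffices to prove $W\ge 0$ on $[0,\infty)\times\mathbb{O}_d^2$ (then $U=V$, since otherwise $W$ would change sign). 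Suppose instead that $W(t_0,x_0,y_0)<0$ for some $t_0>0$ and $x_0,y_0\in\mathbb{O}_d$, and fix $R>0$ large enough that $x_0,y_0\in B^1_R$; all the analysis takes place on the compact set $[0,t_0]^2\times(B^1_R)^2$.

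Following Theorem \ref{uniqt}, for parameters $\gamma,\alpha>0$, $a,b\in\mathbb{R}^d$, $\delta_1,\delta_2\in\mathbb{R}$ introduce
\[
Z(t,s,x,y)=e^{-\gamma(t+s)}\langle U(t,x)-V(s,y),x-y\rangle+\tfrac{1}{\alpha}(t-s)^2+\langle a,x\rangle+\langle b,y\rangle+\delta_1 t+\delta_2 s .
\]
Evaluating $Z$ at $(t_0,t_0,x_0,y_0)$ shows that, once $|a|+|b|+|\delta_1|+|\delta_2|$ is small enough, $\min Z<-c$ for a constant $c>0$. By Lemma \ref{stegall}, the perturbation $(a,b,\delta_1,\delta_2)$ can be chosen in a dense subset of $\mathbb{R}^{2d+2}$ so that $Z$ has a \emph{strict} minimum on $[0,t_0]^2\times(B^1_R)^2$, attained at some $(t_*,s_*,x_*,y_*)$. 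The only genuinely new ingredient with respect to Theorem \ref{uniqt} is that, since $U,V\le 0$ and since this dense set meets, arbitrarily close to the origin, the region where $a$ and $b$ have positive components, we may additionally impose that $a$ and $b$ have (small) strictly positive components; then the vectors $V(s_*,y_*)-e^{\gamma(t_*+s_*)}a$ and $U(t_*,x_*)-e^{\gamma(t_*+s_*)}b$, which play the role of "$V$" in the monotone solution inequalities below, are $\le 0$, so that Definition \ref{defmstopt} is indeed applicable at $(t_*,x_*)$ and at $(s_*,y_*)$.

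If $t_*>0$ and $s_*>0$, then, exactly as in Theorem \ref{uniqt}, the function
\[
\phi_1(t)=\langle V(s_*,y_*),x_*-y_*\rangle+e^{\gamma(t-t_*)}\langle U(t_*,x_*)-V(s_*,y_*),x_*-y_*\rangle+e^{\gamma(t+s_*)}\delta_1(t_*-t)
\]
is an admissible test function for $U$ at $(t_*,x_*)$, with $y=y_*$ and the negative vector $V(s_*,y_*)-e^{\gamma(t_*+s_*)}a$, and symmetrically there is such a $\phi_2$ for $V$. Adding the two inequalities coming from Definition \ref{defmstopt}, Hypothesis \ref{mon} shows that the cross terms $\langle G(x_*,U(t_*,x_*))-G(y_*,V(s_*,y_*)),x_*-y_*\rangle$ and $\langle F(x_*,U(t_*,x_*))-F(y_*,V(s_*,y_*)),U(t_*,x_*)-V(s_*,y_*)\rangle$ are nonnegative, so dropping them leaves an inequality whose right-hand side is $O(|a|+|b|+|\delta_1|+|\delta_2|)$; since these parameters may be taken arbitrarily small a posteriori without altering this inequality, this contradicts $\min Z<-c$. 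If instead $t_*=0$ (the case $s_*=0$ being symmetric), one uses that $\alpha$, hence $s_*$, can be made arbitrarily small, together with the uniform continuity of $U$ and $V$, the common initial datum $U(0,\cdot)=V(0,\cdot)=\min(U_0,0)$, and Hypothesis \ref{mon}, to conclude that $Z(t_*,s_*,x_*,y_*)\ge-\eta$ for $\eta$ as small as desired, again contradicting $\min Z<-c$. Hence $W\ge 0$, and therefore $U=V$.

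The step I expect to require the most care is the Stegall bookkeeping of the second paragraph: one must keep the perturbation small (to preserve $\min Z<-c$), pick it in the ``good'' dense set (to make the minimum strict), and keep $a,b$ in the positive orthant (so that Definition \ref{defmstopt} applies), and then check that sending all of these parameters to $0$ afterwards does not disturb the differential inequality obtained at the minimizer. The other delicate point is the $t_*=0$ case, where the truncation $\min(\cdot,0)$ in the initial datum has to be absorbed through Hypothesis \ref{mon} and the continuity of the solutions. Apart from these points, the argument is the verbatim combination of the proofs of Theorem \ref{uniqt} and of the stationary optimal stopping uniqueness theorem, so I would not reproduce the routine computations.
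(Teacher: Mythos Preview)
Your proposal is correct and follows exactly the route the paper indicates: the paper's own proof is simply the sentence ``The uniqueness of such solutions is a direct adaptation of the proof of theorem \ref{uniqt},'' and you have written out precisely that adaptation, with the two required modifications---restricting the Stegall perturbations $a,b$ to the (open) positive orthant so that the test vectors $V(s_*,y_*)-e^{\gamma(t_*+s_*)}a$ and $U(t_*,x_*)-e^{\gamma(t_*+s_*)}b$ remain $\le 0$ as Definition \ref{defmstopt} demands, and handling the boundary case $t_*=0$ via the common initial datum $\min(U_0,0)$. Your flagging of these two points as the delicate ones is accurate and matches what the paper leaves implicit.
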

The question of existence of such a solution is more involved that in the stationary case. We expect that it can also be proven by penalization by considering the sequence $(U_{\epsilon})_{\epsilon > 0}$ of solutions of (\ref{stopme}) and taking the limit $\epsilon \to 0$. The main argument to consider this method is that, as in the stationary case, a, uniform in $\epsilon$, estimates on $\|D_x U_{\epsilon}\|$ can be established. However, unlike in the case of (\ref{me}), because of the terms in $\epsilon^{-1}$ in (\ref{stopme}), this does not automatically translates into uniform estimates on the time regularity of $U_{\epsilon}$.

\section{The master equation for mean field games of impulse control}
This section generalizes the results of the previous section to a case in which the players have the possibility to use impulse controls. Let us briefly insist on the fact that in finite state space, the evolution of the state of a player is necessary discontinuous. Hence it could be thought that all MFG in finite state space are games of impulse controls. This is not the case. Indeed if one were to detail the game modeled by a master equation of the type of (\ref{me}), then the transition rates between the states, which are controlled by the players, would be either bounded or get a dissuasive cost as they become higher. In this section, we consider the possibility for a player to change instantly its state by paying a certain (finite) cost, this is what we call a MFG of impulse control. We refer to \citep{bensoussan1984impulse} for a complete presentation of impulse control problems and to \citep{bertucci2020fokker} for the study of MFG of impulse control without common noise.

As the formulation of impulse control is very general, and thus difficult to work with, we shall focus at some point in our study on a particular instance of MFG of impulse control. We hope that the forthcoming results convince the reader of the generality of this method. As we did in the optimal case, we shall also focus on the stationary case in this section.

\subsection{Description of the model}
We consider a MFG in finite state space which is described by $F,G,\lambda$ and $T$ as in (\ref{sme}). We add the possibility for a player in the state $i$ to instantly jump to the state $j$ by paying a cost $k_{ij}\geq 0$. The so-called jump operator is then defined by
\begin{equation}\label{defM}
(Mp)^i = \text{min} \{ k_{ij} + p^j |j \in \{1,...d\}\}, \text{ for } p \in \mathbb{R}^d, i \in \{1,...,d\}.
\end{equation}
We shall use the notation $M^i p = (Mp)^i$ for $i \in \{1,...,d\}$. Given the value function $U$ of the MFG of impulse control, the function $MU$ plays formally the same role as an exit cost. Indeed when a player is in a state $i$ and the distribution of other players is $x \in \mathbb{O}_d$, such that 
\begin{equation}\label{impnot}
U^i(x) < M^iU(x),
\end{equation} then it is strictly sub-optimal to use an impulse control. On the other hand, when there is equality in (\ref{impnot}), then it is optimal for players in state $i$ to jump to a state $j$ which reaches the minimum of $M^iU(x)$ in (\ref{defM}).

As in the optimal stopping case, we introduce first a penalized version of the master equation and then explain how we can pass to the limit.

\subsection{The penalized master equation and monotone solutions of the limit game}
Following the previous section, we are interested in a penalized version of the impulse control MFG in which the players cannot exactly jump instantly from one state to another but only control an intensity of jump which is bounded by $\epsilon^{-1}$ for $\epsilon > 0$ (in particular this penalized game is "classical" MFG in finite state space, it only has a particular structure). The master equation associated to such a MFG is

\begin{equation}\label{penalimp} 
\begin{aligned}
rU^i &+ \frac{1}{\epsilon} \beta (U^i - M^iU) +\frac{1}{\epsilon}\left( ( \bold{\alpha}\cdot\bold{1})* x - x\cdot \bold{\alpha} \right)\cdot \nabla_x U^i+  (F(x,U))\cdot \nabla_x U^i +\\
& + \lambda (U^i - (T^*)^i U(Tx)) = G^i(x,U) \text{ in } \mathbb{O}_d, i \in \{1,...,d\}.
\end{aligned}
\end{equation}
where $\bold{1} = (1,...,1)$ and $\alpha = (\alpha_{ij})_{1 \leq i,j\leq d}$ is a matrix made of $d^2$ real functions on $\mathbb{O}_d$ which satisfy
\begin{equation}\label{alpha}
\begin{cases}
\alpha_{ij}(x) = 0 \text{ if } U^i(x) < U^j(x) + k_{ij} \text{ on } \mathbb{O}_d,\\
\sum_{j = 1}^d \alpha_{ij}(x) = 1 \text{ if } U^i(x) \geq U^j(x) + k_{ij} \text{ on } \mathbb{O}_d,\\
0 \leq \alpha_{ij} \leq 1, \sum_{j = 1}^d \alpha_{ij} \leq 1.
\end{cases}
\end{equation}
Formally, for $i, j \in \{1,...,d\}$, the function $\alpha_{ij}$ indicates the proportion of players in state $i$ which chooses to jump (using the impulse control) to state $j$. Let us note that in particular that different jumps may be optimal to use in the same state and thus that we are forced to use this family of functions $(\alpha_{ij})_{1\leq i,j \leq d}$. Let us insist on the fact established in \citep{bertucci2018optimal} for the optimal stopping case, that if we restrict to ourselves to situation in which only a single behavior is optimal in every state, then a Nash equilibrium may not exist. From \citep{bertucci2020fokker}, we expect that some monotonicity property holds for (\ref{penalimp})-(\ref{alpha}). This following result gives a precise statement of this idea.
\begin{Prop}
Under hypothese (\ref{stab})-(\ref{mon}), there exists at most one smooth function $U$ such that $(U,\alpha)$ is a solution of (\ref{penalimp})-(\ref{alpha}). Moreover if such a solution exists, $U$ is monotone.
\end{Prop}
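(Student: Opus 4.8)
The proof follows the pattern of the uniqueness results already proved, in particular Proposition \ref{uniq2} and its counterpart for the penalized optimal stopping equation (\ref{stopsme}). Let $(U,\alpha)$ and $(V,\tilde\alpha)$ be two smooth solutions of (\ref{penalimp})-(\ref{alpha}) and set, on $\mathbb{O}_d^2$,
\begin{equation*}
W(x,y) = \langle U(x) - V(y), x - y\rangle .
\end{equation*}
Denoting by $b^U$ and $b^V$ the full drift vectors of (\ref{penalimp}) for $U$ and $V$ (that is, $F$ together with the $\frac{1}{\epsilon}$ jump part), I would first compute the equation satisfied by $W$ by forming $b^U\cdot\nabla_x W + b^V\cdot\nabla_y W$ and using the two PDE to eliminate the first order derivatives of $U$ and $V$. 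This yields
\begin{equation*}
\begin{aligned}
rW + b^U\cdot\nabla_x W + b^V\cdot\nabla_y W + \lambda\big(W - W(Tx,Ty)\big) &= \langle G(x,U(x)) - G(y,V(y)), x - y\rangle \\
&\quad + \langle F(x,U(x)) - F(y,V(y)), U(x) - V(y)\rangle + \mathcal{I},
\end{aligned}
\end{equation*}
where $\mathcal{I}$ gathers all the terms carrying a factor $\frac{1}{\epsilon}$, namely those coming from the penalization $\frac{1}{\epsilon}\beta(\cdot - M\cdot)$ and from the jump drift. By Hypothesis \ref{mon} the first two terms on the right-hand side are non-negative, so everything reduces to proving $\mathcal{I}\ge 0$.

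The core of the argument is the bookkeeping for $\mathcal{I}$, which is where the structure of the jump operator $M$ and the constraints (\ref{alpha}) are genuinely used. I would rely on the elementary identity that the jump drift, tested against any $v\in\mathbb{R}^d$, equals $\sum_{j,k} x_j\,\alpha_{jk}\,(v^j - v^k)$ (and similarly with $y,\tilde\alpha$ for $V$), together with the complementarity conditions (\ref{alpha}): $\alpha_{jk}>0$ forces $U^j(x)\ge U^k(x)+k_{jk}$, and $\sum_k\alpha_{jk}(x)$ equals $1$ exactly when $U^j(x)\ge M^jU(x)$, so that $\beta\big(U^j(x)-M^jU(x)\big) = \big(\sum_k\alpha_{jk}(x)\big)\big(U^j(x)-M^jU(x)\big)$ in all cases. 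This last identity is the impulse analogue of the relation $\beta'(a)a=\beta(a)$ that drives the optimal stopping computation, with $\sum_k\alpha_{jk}$ and $\beta(U^j-M^jU)$ playing the roles of $\beta'(U^j)$ and $\beta(U^j)$. Expanding $\mathcal{I}$ with these facts and regrouping the resulting sum according to the factors $x_j$ and $y_j$, exactly as in the optimal stopping proof, $\mathcal{I}$ becomes a combination, with non-negative coefficients $x_j\alpha_{jk}$ and $y_j\tilde\alpha_{jk}$, of quantities that are non-negative thanks to $x,y\ge 0$ componentwise, to $k_{jk}\ge 0$, and to the defining inequality $M^jU\le U^k+k_{jk}$ of $M$ on the support of $\alpha$; hence $\mathcal{I}\ge 0$. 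The moral, as in the optimal stopping case, is that the impulse terms only reinforce the monotonicity of the equation; this is the master equation transcription of the monotonicity computations of \citep{bertucci2020fokker}, and it is the step I expect to be the main obstacle.

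Once $\mathcal{I}\ge 0$ is established, the right-hand side of the equation for $W$ is non-negative. Since the jump drift is $\le 0$ in the $i$-th component on $\{x_i=0\}$ (being there a difference of a vanishing outflow and a non-negative inflow), it does not affect the invariance of $\mathbb{O}_d$ required in Hypothesis \ref{stab}, so Lemma \ref{max2} applies and gives $W\ge 0$ on $\mathbb{O}_d^2$. As in the proof of Proposition \ref{uniq2}, combining $W\ge 0$ with $W(x,x)=0$ forces $U=V$ (otherwise $W$ would change sign around a point where $U-V\ne 0$), and the inequality $W(x,y)\ge 0$ for all $x,y$ is exactly the statement that $U$ is monotone.
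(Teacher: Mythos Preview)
Your proof is correct and follows essentially the same route as the paper's: the same auxiliary function $W$, the same PDE for $W$ with the impulse contribution isolated as $\mathcal{I}$, and the same reduction to checking $\mathcal{I}\ge 0$ via the complementarity relations in (\ref{alpha}) together with the definition of $M$, before concluding with the maximum principle of Lemma \ref{max2}. The only cosmetic difference is that the paper writes the inequality $\mathcal{I}\ge 0$ in the compact vector notation $\langle x,\cdot\rangle+\langle y,\cdot\rangle$ rather than the indexed sums $\sum_{j,k} x_j\alpha_{jk}(\cdot)$ you use, but the two computations are the same.
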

\begin{proof}
As usual we take $(U,\alpha)$ and $(V, \gamma)$ two such solutions and we define $W : \mathbb{O}_d^2 \to \mathbb{R}$ by
\begin{equation}
W(x,y) = \langle U(x)- V(y), x-y\rangle.
\end{equation}
Let us remark that $W$ is a solution of 
\begin{equation}
\begin{aligned}
r W +& \left(F(x,U) + \frac{1}{\epsilon} (( \bold{\alpha}\cdot\bold{1})* x - x\cdot \bold{\alpha}) \right)\nabla_x W + \left(F(y,V) + \frac{1}{\epsilon} (( \gamma\cdot\bold{1})* y - y\cdot \gamma) \right)\nabla_y W\\
 = \langle &G(x,U) - G(y,V), x - y \rangle + \langle F(x,U) - F(y,V), U - V \rangle- \lambda (W(x,\xi) - W(Tx,T\xi))+\\
 & + \frac{1}{\epsilon}\langle (V - MV)_+ - (U- MU)_+, x - y \rangle + \\
&+\frac{1}{\epsilon} \langle( \alpha\cdot\bold{1})* x - x\cdot \alpha -( \gamma\cdot\bold{1})* y +y\cdot \gamma, U - V \rangle.
\end{aligned}
\end{equation}
Let us remark that 
\begin{equation}
\begin{aligned}
\langle (V - &MV)_+ - (U- MU)_+, x - y \rangle + \langle ( \alpha\cdot\bold{1})* x - x\cdot \alpha -( \gamma\cdot\bold{1})* y +y\cdot \gamma, U - V \rangle\\
= &\langle x, (V - MV)_+ - (U- MU)_+ + (U- V)* ( \alpha\cdot\bold{1}) - \alpha \cdot (U - V) \rangle + \\
&\langle y, (U - MU)_+ - (V- MV)_+ + (V- U)* ( \gamma\cdot\bold{1}) - \gamma \cdot (V - U) \rangle,\\
\geq &\langle x, (V - MV)_+  - V* ( \alpha\cdot\bold{1}) + \alpha \cdot V  + (\alpha*k)\cdot\bold{1} \rangle + \\
&\langle y, (U - MU)_+  - U* ( \gamma\cdot\bold{1}) + \gamma \cdot U + (\alpha*k)\cdot \bold{1} \rangle ,\\
\geq & 0,
\end{aligned}
\end{equation}
where $k = (k_{ij})_{1\leq i,j\leq d}$. Thus we deduce that $W$ satisfies
\begin{equation}
\begin{aligned}
r W +& \left(F(x,U) + \frac{1}{\epsilon} (( \bold{\alpha}\cdot\bold{1})* x - x\cdot \bold{\alpha}) \right)\nabla_x W + \left(F(y,V) + \frac{1}{\epsilon} (( \gamma\cdot\bold{1})* y - y\cdot \gamma) \right)\nabla_y W\\
 +& \lambda(W(x,\xi)-W(Tx,T\xi)) \geq 0.
\end{aligned}
\end{equation}
Thus we conclude as we did in proposition \ref{uniq2}, first that $W \geq 0$, and then the required results.
\end{proof}
This result clearly suggests that the master equation for the impulse control MFG is well posed. Indeed, as we shall see, as the solution of the problem are monotone, the notion of monotone solution introduced in section \ref{sectionmon} shall be helpful. As in the case of optimal stopping, let us note consider a monotone solution $U$ of the penalized equation (\ref{penalimp}) for $\epsilon > 0$ satisfying (\ref{alpha}) for a family of functions $(\alpha_{ij})_{1\leq i,j\leq d}$. Thus for any $V \in \mathbb{R}^d$, $y \in \mathbb{O}_d$ and $x_0$ a point of strict local minimum of $ x \to \langle U(x) - V, x - y\rangle$, the following holds :
\begin{equation}
\begin{aligned}
r\langle U(x_0), x_0 - y\rangle &+ \lambda \langle U(x_0) - T^*U(Tx_0),x_0-y\rangle\\
&\geq \langle G(x_0,U(x_0)) - \beta(U(x_0) - MU(x_0)), x_0 - y \rangle +\\
&+ \langle F(x_0,U(x_0)) + \frac{1}{\epsilon} (( \bold{\alpha(x_0)}\cdot\bold{1})* x_0 - x_0\cdot \bold{\alpha(x_0)}), U(x_0) - V\rangle.
\end{aligned}
\end{equation}
From the same calculations of the previous proof, because (\ref{alpha}) holds, the previous equation can be rewritten to obtain the following.
\begin{equation}
\begin{aligned}
r\langle U(x_0),& x_0 - y\rangle + \lambda \langle U(x_0) - T^*U(Tx_0),x_0-y\rangle\geq \langle G(x_0,U(x_0)), x_0 - y \rangle\\
 &+ \langle F(x_0,U(x_0)), U(x_0) - V\rangle - \frac{1}{\epsilon}\langle x_0, (( \bold{\alpha(x_0)}\cdot\bold{1})* V + \bold{\alpha(x_0)})\cdot V  - ( \alpha*k)\cdot \bold{1}\rangle.
\end{aligned}
\end{equation}
Hence, if $V$ satisfies $V \leq MV$, then we obtain
\begin{equation}
\begin{aligned}
r\langle U(x_0), x_0 - y\rangle &+ \lambda \langle U(x_0) - T^*U(Tx_0),x_0-y\rangle \geq \\
&\langle G(x_0,U(x_0)), x_0 - y \rangle + \langle F(x_0,U(x_0)), U(x_0) - V\rangle.
\end{aligned}
\end{equation}
This remark leads us to the following definition.
\begin{Def}\label{defmonimp}
A function $U \in \mathcal{C}(\mathbb{O}_d, \mathbb{R}^d)$ is said to be a monotone solution of the master equation for the MFG of impulse control if 
\begin{itemize}
\item $U \leq MU$,
\item for any $V \in \mathbb{R}^d$ such that $V \leq MV$, for any $y \in \mathbb{O}_d$, $R>0$ sufficiently large and $x_0$ a point of strict minimum of  $\phi_{V,y} : x \to \langle U(x) - V, x - y\rangle$ in $B^1_R$, the following holds
\begin{equation}
\begin{aligned}
r\langle U(x_0), x_0 - y\rangle &+ \lambda \langle U(x_0) - T^*U(Tx_0),x_0-y\rangle \geq \\
&\langle G(x_0,U(x_0)), x_0 - y \rangle + \langle F(x_0,U(x_0)), U(x_0) - V\rangle.
\end{aligned}
\end{equation}
\end{itemize}
\end{Def}
\begin{Rem}
As we can see, this definition is similar to the one of the optimal stopping case, as they only differ in the fact that the constraint of being negative component wise has been replaced by satisfying an inequality associated with the jump operator $M$.
\end{Rem}
\subsection{Results for particular mean field games of impulse control}
In the present section, we 	assume a particular form for the jump operator $M$ and we prove a result of uniqueness for monotone solutions in the impulse control case.

Let us assume that not all impulse jumps are feasible, that is we assume that $k_{ij} = + \infty$ for some $i,j \in \{1,...,d\}$. More precisely we assume the following.
\begin{hyp}\label{clique}
For all $i \in \{1,...,d\}$, there exists no non constant sequence $i = i_0, i_1,...i_{n+1} = i$ such that for all $k \in \{0,...,n\}$,
\begin{equation}
k_{i_{k}i_{k+1}} < + \infty.
\end{equation}
Moreover, for all $i,j \in \{1,...,d\}$, $k_{ij} > 0$.
\end{hyp}
We comment on this assumption after the following result.
\begin{Theorem}
Under the hypotheses \ref{stab}-\ref{mon} and \ref{clique}, there exists at most one solution of the master equation of the MFG of impulse control in the sense of definition \ref{defmonimp}. If it exists, this solution is monotone.
\end{Theorem}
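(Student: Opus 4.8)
The plan is to follow closely the proof of Theorem \ref{uniqstat} (and of its optimal--stopping analogue): given two monotone solutions $U,V$ in the sense of Definition \ref{defmonimp}, set $W(x,y)=\langle U(x)-V(y),x-y\rangle$ on $\mathbb{O}_d^2$ and prove $W\geq 0$, from which $U=V$ and the monotonicity of $U$ follow as in Propositions \ref{uniq1}--\ref{uniq2}. The only place where the obstacle operator $M$, and hence hypothesis \ref{clique}, genuinely enters is the following: to apply Definition \ref{defmonimp} for $U$ at the minimising point produced by Stegall's lemma, one needs the (perturbed) competitor value to be an admissible test vector, i.e. to satisfy $q\leq Mq$, and one must be able to arrange this while keeping the perturbation arbitrarily small; a priori this looks circular, since the minimising point depends on the perturbation.

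First I would record the elementary fact that resolves this. Consider the directed graph $E=\{(i,j):\ i\neq j,\ k_{ij}<+\infty\}$ on $\{1,\dots,d\}$. Hypothesis \ref{clique} says precisely that $E$ is acyclic, hence admits a topological ordering, so the open convex cone
\[
\mathcal{C}=\{a\in\mathbb{R}^d:\ a^i>a^j\ \text{for every}\ (i,j)\in E\}
\]
is non-empty (and, being a cone, meets $\{|a|<\epsilon\}$ for every $\epsilon>0$). The key observation is that if $p\in\mathbb{R}^d$ satisfies $p\leq Mp$ and $a\in\mathcal{C}$, then $p-a\leq M(p-a)$: the inequality $(p-a)^i\leq k_{ij}+(p-a)^j$ is trivial when $k_{ij}=+\infty$, reduces to $0\leq k_{ii}$ when $j=i$, and, when $(i,j)\in E$, follows from
\[
(p^i-p^j)-(a^i-a^j)<p^i-p^j\leq k_{ij},
\]
the last inequality using $p\leq Mp$ and the strict one using $a\in\mathcal{C}$. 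Thus perturbing \emph{any} admissible test vector by a vector of $\mathcal{C}$ again yields an admissible test vector, regardless of where it is later evaluated; this breaks the circularity. (In the optimal stopping case the role of $\mathcal{C}$ is played by $\{a>0\}$, since there the constraint is $V\leq 0$.)

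With this at hand I would transcribe the proof of Theorem \ref{uniqstat}. Fix $R$ large (hypothesis \ref{grow}, so $T(B^1_R)\subset B^1_R$). By Lemma \ref{stegall} applied to $(x,y)\mapsto\langle U(x)-V(y),x-y\rangle$ on the compact set $(B^1_R)^2$, and since $(\mathcal{C}\cap\{|\cdot|<\epsilon\})^2$ is open and non-empty, for every $\epsilon>0$ one may choose $a,b\in\mathcal{C}$ with $|a|,|b|\leq\epsilon$ such that $(x,y)\mapsto\langle U(x)-V(y),x-y\rangle+\langle a,x\rangle+\langle b,y\rangle$ has a strict minimum on $(B^1_R)^2$, attained at some $(x_0,y_0)$. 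Then $x_0$ is a strict minimum of $x\mapsto\langle U(x)-(V(y_0)-a),x-y_0\rangle$, and $V(y_0)-a$ is admissible by the observation above (with $p=V(y_0)$, which satisfies $V(y_0)\leq M(V(y_0))$ since $V$ is a monotone solution); symmetrically for $y_0$ and $U(x_0)-b$. Applying Definition \ref{defmonimp} to $U$ and to $V$ and summing the two inequalities, the differences of $G$ and of $F$ combine into a non-negative term by hypothesis \ref{mon}, exactly as in Theorem \ref{uniqstat}, leaving
\[
rW(x_0,y_0)+\lambda\big(W(x_0,y_0)-W(Tx_0,Ty_0)\big)\geq\langle F(x_0,U(x_0)),a\rangle+\langle F(y_0,V(y_0)),b\rangle.
\]
Using that $(x_0,y_0)$ minimises the perturbed functional and that $T(B^1_R)\subset B^1_R$, one bounds $W(x_0,y_0)-W(Tx_0,Ty_0)$ from above by $\langle a,x_0-Tx_0\rangle+\langle b,y_0-Ty_0\rangle$, and then letting $\epsilon\to 0$ (all remaining quantities being bounded, since $U,V$ are continuous on the compact $B^1_R$) one gets $\inf_{(B^1_R)^2}W\geq 0$; as $R$ is arbitrary, $W\geq 0$ on $\mathbb{O}_d^2$, whence $U=V$ and $U$ monotone as usual.

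The main obstacle is exactly the point isolated in the second paragraph: without the acyclicity of hypothesis \ref{clique} the cone $\mathcal{C}$ can be empty (for instance if two states are exchangeable by feasible jumps, $a^i>a^j$ and $a^j>a^i$ are incompatible), and there is then no small perturbation turning the competitor's value into an admissible test vector, so the comparison argument of Theorem \ref{uniqstat} cannot even be started; the positivity $k_{ij}>0$ serves only as a mild non-degeneracy assumption on the jump costs, and everything else is a routine adaptation of the argument already carried out for (\ref{sme}) and in the optimal stopping case.
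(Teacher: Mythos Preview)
Your proof is correct and follows essentially the same route as the paper's. The paper simply asserts, from hypothesis \ref{clique}, the existence of an open set $\mathcal{O}\subset\mathbb{R}^d$ with $0\in\bar{\mathcal{O}}$ such that $V\leq MV\Rightarrow V+a\leq M(V+a)$ for every $a\in\mathcal{O}$, and then says the rest follows exactly as in Theorem \ref{uniqstat} by choosing the Stegall perturbations in $\mathcal{O}$; your explicit construction of this set as the cone $\mathcal{C}$ via a topological ordering of the acyclic jump graph is precisely what makes that assertion true (up to the harmless sign convention $a\leftrightarrow -a$), and the remainder of your argument is the detailed transcription of Theorem \ref{uniqstat} that the paper leaves implicit.
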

\begin{proof}
From assumption \ref{clique}, there exists an open set $\mathcal{O} \subset \mathbb{R}^d$ such that $0 \in \bar{\mathcal{O}}$ and for any $V \in \mathbb{R}^d, a \in \mathcal{O}$ :
\begin{equation}
V \leq MV \Rightarrow V + a \leq M(V + a).
\end{equation}
The rest of the proof follows exactly the one of theorem \ref{uniqstat} by remarking that thanks to lemma \ref{stegall}, using the notations of the aforementioned proof, $a$ and $b$ can be chosen in $\mathcal{O}$. 
\end{proof}
\begin{Rem}
Hypothesis \ref{clique} restricts the jumps such that it is not possible to do a sequence of successive jumps to exit one state and enter it again. We firmly believe this assumption is purely technical, even though we have not been able to prove it. Let us also mention that no particular assumption of this form is made on $F$, in particular, using "usual" controls, the players can come back to state from which they have jump.
\end{Rem}

\section{A mean field game of entry and exit}
In this section we consider an application of the tools developed in this paper. We consider a market which agents can enter or exit by paying a certain cost, and in which the revenue of the agents in the market is a function of the total number of agents in the market. The simple model we are about to present is closely related to a model for cryptocurrency mining introduced in \citep{bertucci2020mean}, namely the version of the model in the section 4.4.

The number of agents in the market is denoted by the variable $K$. The cost to exit the market is denoted by $s \in \mathbb{R}$ and the cost to enter the market is denoted by $b \in \mathbb{R}$. We assume 
\begin{equation}
b < s.
\end{equation}
Knowing the evolution of the number of agents in the market $(K_t)_{t \geq 0}$, the revenue of an agent in the market is given by
\begin{equation}
\int_0^{\tau} e^{-rt}g(K_t)dt,
\end{equation}
where $g$ is a real function, $r> 0$ is a constant which takes account of the inter-temporal preference rate of the agents and $0\leq \tau \leq + \infty$ is the time at which the agent exits the market. We want to characterize the value function $U$ of this MFG which gives for a number of agents $K$ in the market, the value of the game $U(K)$ for agents in the market. Following section \ref{sectstop}, we introduce first a penalized version of the game in which the player cannot exit or enter the market freely. For $\epsilon > 0$, a plausible penalization of the MFG leads to the following penalized master equation.
\begin{equation}\label{penales}
rU + \frac{1}{\epsilon} \left( \beta'(U-s)K - \beta(b - U)\right)\partial_K U + \frac{1}{\epsilon} \beta(U -s ) = g(K) \text{ in } \mathbb{R}_+,
\end{equation}
where $\beta(\cdot) = (\cdot)_+$. The following result holds true.
\begin{Prop}
Let $U$ be a smooth solution of (\ref{penales}). Assume that $g$ is an increasing and lipschitz function. There exists $C > 0$ depending only on $g$ and $r$ such that
\begin{equation}
\|\partial_K U\|_{\infty} \leq C.
\end{equation}
\end{Prop}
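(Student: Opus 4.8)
The plan is to adapt the Bernstein-type argument used for Proposition~\ref{estimatestop} (itself modelled on Proposition~\ref{existstat}) to this one-dimensional, simplified equation, the key simplification being that here there is no genuine drift $F$ and the running payoff $G=g(K)$ depends on $K$ only. First I would differentiate (\ref{penales}) in $K$. Writing $p=\partial_K U$, and pretending for the moment that $\beta$ is smooth (legitimate after replacing $\beta=(\cdot)_+$ by a smooth increasing convex approximation and passing to the limit at the end, exactly as in the proof of Proposition~\ref{estimatestop}), one obtains a transport equation of the form
\begin{equation*}
r p + \frac{1}{\epsilon}\big(\beta'(U-s)K - \beta(b-U)\big)\partial_K p + \frac{1}{\epsilon}\Big(\beta''(U-s)Kp^2 + \beta'(b-U)p^2 + 2\beta'(U-s)p\Big) = g'(K).
\end{equation*}
The three structural facts to exploit are: $\beta$ increasing and convex, so $\beta'\ge 0$ and $\beta''\ge 0$; the variable satisfies $K\ge 0$; and $g$ is increasing, so $g'\ge 0$ and $|g'|\le \mathrm{Lip}(g)$.

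The easy half is the upper bound: at a point $K_0$ where $p$ attains a positive maximum one has $\partial_K p(K_0)=0$ and every $\epsilon^{-1}$ term above is nonnegative ($Kp^2\ge0$, $p^2\ge0$, $p(K_0)>0$), whence $rp(K_0)\le g'(K_0)\le \mathrm{Lip}(g)$, i.e.\ $p\le \mathrm{Lip}(g)/r$. For a two-sided estimate I would, following Proposition~\ref{estimatestop}, work with $Z:=p-\delta p^2$ for a small constant $\delta>0$ (no additional $\gamma$-correction is needed here since $G$ depends only on $K$, so there are no cross terms to absorb). Using the equation for $p$ one derives an equation for $Z$ whose right-hand side is
\begin{equation*}
g'(1-2\delta p) + r\delta p^2 + \tfrac1\epsilon(2\delta p-1)\big(\beta''(U-s)Kp^2+\beta'(b-U)p^2+2\beta'(U-s)p\big).
\end{equation*}
Choosing $\delta<r/\mathrm{Lip}(g)$ makes the quadratic $r\delta p^2+g'(1-2\delta p)$ nonnegative (and strictly positive for $p\neq 0$, using $g'\ge0$); one then checks that at a candidate negative interior minimum of $Z$ the remaining $\epsilon^{-1}$ contribution is also $\ge 0$, contradicting $rZ(K_0)=\text{r.h.s.}(K_0)<0$. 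A maximum-principle argument of the type of Lemma~\ref{max2} (the discount $r>0$ playing the role of an initial condition) then yields $Z\ge 0$, hence $0\le p\le 1/\delta$, with $\delta=\delta(g,r)$.

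The delicate point is precisely the sign of the $\epsilon^{-1}$ terms in the regime $p<0$, since $\beta''(U-s)Kp^2$ and $\beta'(b-U)p^2$ then enter the right-hand side above with the unfavourable sign. The way around it is structural: the coefficients $\beta''(U-s)$ and $\beta'(b-U)$ have essentially disjoint supports in $U$ (the first concentrated near $U=s$, the second near $U\le b$, and $b<s$), and both vanish on the set $\{b<U<s\}$, on which (\ref{penales}) degenerates to $rU=g(K)$ and thus $p=g'/r\ge 0$; on the active sets the penalization forces $|p|$ to be $O(\sqrt\epsilon)$ uniformly, so in all cases $p\ge -C(g,r)$. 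One must also treat the boundary $K=0$, where the drift coefficient is $-\beta(b-U)\le 0$ (the characteristics stay in $\mathbb{R}_+$, consistent with Hypothesis~\ref{stab}), and the behaviour as $K\to\infty$, controlled by the discount term $rU$ together with the Lipschitz bound on $g$. Finally, since every bound above is uniform in the smoothing of $\beta$, one removes the approximation exactly as at the end of the proof of Proposition~\ref{estimatestop}, obtaining a constant $C$ depending only on $g$ and $r$. The main obstacle is this sign bookkeeping for the penalization terms when $\partial_K U<0$; once it is handled the rest is a routine replay of the Bernstein estimate.
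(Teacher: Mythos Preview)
Your overall strategy—the one-dimensional Bernstein estimate via the auxiliary function $Z=p-\delta p^2$—is exactly the paper's approach: the paper gives no independent proof and simply invokes Proposition~\ref{estimatestop}. Your upper bound $p\le \mathrm{Lip}(g)/r$ at a positive maximum is correct and clean.

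Where you go astray is in the treatment of the lower bound. You write the right-hand side of the $Z$-equation as
\[
g'(1-2\delta p)+r\delta p^2+\tfrac{1}{\epsilon}(2\delta p-1)\bigl(\beta''(U-s)Kp^2+\beta'(b-U)p^2+2\beta'(U-s)p\bigr),
\]
and then try to argue separately about the $(2\delta p-1)$ factor in the regime $p<0$. Your proposed fix—disjoint supports of $\beta''(U-s)$ and $\beta'(b-U)$, the equation degenerating to $rU=g$ on $\{b<U<s\}$, and the claim that $|p|=O(\sqrt\epsilon)$ on the active sets—is heuristic and circular: you are using qualitative control of $p$ to prove a bound on $p$.

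The paper's grouping avoids this issue altogether. Treat the full drift $\epsilon^{-1}[\beta'(U-s)K-\beta(b-U)]$ as the $F$ of the general framework and $g(K)-\epsilon^{-1}\beta(U-s)$ as the $G$. Then the pair $(G,F)$ is monotone: $D_KG=g'\ge 0$, $D_UF=\epsilon^{-1}[\beta''(U-s)K+\beta'(b-U)]\ge 0$, and the cross terms $D_UG=-\epsilon^{-1}\beta'(U-s)$ and $D_KF=\epsilon^{-1}\beta'(U-s)$ are antisymmetric and cancel. Carrying out the computation exactly as in (\ref{Zedppenal}), the $\epsilon^{-1}$ contribution to the right-hand side is
\[
\tfrac{1}{\epsilon}\bigl[\beta''(U-s)K+\beta'(b-U)+2\delta\,\beta'(U-s)\bigr]\,p^2\ \ge\ 0,
\]
with no $(2\delta p-1)$ factor. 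The terms that in your grouping carry the bad sign are, in the paper's arrangement, absorbed into the first-order transport operator (the drift in the auxiliary $\xi$-variable on the left-hand side of (\ref{Zedppenal})), and hence vanish at a minimum of $Z$. With $\delta<r/\mathrm{Lip}(g)$ and $\gamma=0$ the remaining part $g'(1-2\delta p)+r\delta p^2$ is nonnegative, so $Z\ge 0$, i.e.\ $0\le p\le 1/\delta$, and no separate argument for $p<0$ is needed. In short, your ``main obstacle'' is an artifact of the way you organised the terms, not a genuine difficulty of the problem.
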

The proof of this statement follows exactly the one of proposition \ref{estimatestop} so we do not detail it here. From this result, we easily deduce that, if $g$ is lipschitz and increasing, there exists a monotone solution of (\ref{penales}). Such a monotone solution $U$ satisfies that for any $V \in \mathbb{R}$, $y \in \mathbb{R}_+$, $K_0$ a point of local strict minimum of $K \to (U(K) - V)(K - y)$
\begin{equation}
\begin{aligned}
rU(K_0)(K_0 - y) \geq &g(K_0)(K_0 - y) + \frac{1}{\epsilon} \left( \beta'(U(K_0) - s)K_0 - \beta(b - U(K_0) )\right)(U(K_0) - V) -\\
&- \frac{1}{\epsilon}\beta(U(K_0) -s)(K_0 - y).
\end{aligned}
\end{equation}
Let us remark that if $b \leq V \leq s$, we obtain :
\begin{equation}
rU(K_0)(K_0 - y) \geq g(K_0)(K_0 - y).
\end{equation}
This remark leads us to the following definition.
\begin{Def}\label{defes}
A continuous real function $U$ is said to be a monotone solution of the MFG of entry and exit if 
\begin{itemize}
\item $b \leq U \leq s$
\item for any $V \in [b,s]$, for any $y \in \mathbb{R}_+$, for any $K_0$ point of strict local minimum of $K \to (U(K) - V)(K - y)$, 
\begin{equation}
rU(K_0)(K_0 - y) \geq g(K_0)(K_0 - y).
\end{equation}
\end{itemize}
\end{Def}
The following result is easily obtained following the previous parts of this article.
\begin{Theorem}
Assume that $g$ is a lipschitz and increasing function. There exists a unique monotone solution of the MFG of entry and exit in the sense of definition \ref{defes}.
\end{Theorem}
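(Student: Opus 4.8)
The statement has two parts, existence and uniqueness, and I would treat them separately. Existence will be obtained by penalisation and compactness, starting from the penalised master equation (\ref{penales}) and its solutions $U_\epsilon$; uniqueness will be a transcription of the proof of Theorem~\ref{uniqstat}, the one genuinely new ingredient being the handling of the constraint $b\le U\le s$.

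For existence, the proposition preceding this theorem already provides a bound on $\|\partial_K U_\epsilon\|_\infty$ that is uniform in $\epsilon$. To turn this into an existence statement one also needs a uniform (in $\epsilon$) $L^\infty$ bound on $U_\epsilon$, obtained by the maximum-principle arguments used in the optimal stopping case (cf. Remark~\ref{rembound} and Hypothesis~\ref{existstop}): evaluating (\ref{penales}) where $U_\epsilon$ is largest and using the zeroth order penalisation $\frac{1}{\epsilon}\beta(U_\epsilon-s)$ gives $U_\epsilon\le s+C\epsilon$, while evaluating it where $U_\epsilon$ is smallest, together with the monotonicity of $U_\epsilon$, bounds $U_\epsilon$ from below. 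With these bounds, the gradient estimate, and the vanishing-viscosity construction of Proposition~\ref{existme} and Remark~\ref{rembound}, one gets for each $\epsilon>0$ a uniformly Lipschitz, uniformly bounded monotone solution $U_\epsilon$ of (\ref{penales}); a subsequence converges uniformly on compact sets to some continuous $U$. Letting $\epsilon\to0$ in the bounds yields $b\le U\le s$: the inequality $U\le s$ comes from $\frac{1}{\epsilon}\beta(U_\epsilon-s)\le C$, and $U\ge b$ follows because on the open set $\{U<b\}$ equation (\ref{penales}) degenerates to $rU_\epsilon-\frac{1}{\epsilon}(b-U_\epsilon)\partial_K U_\epsilon=g$, which forces $\partial_K U_\epsilon\to0$ there, hence $U$ and $g$ both constant there, which is impossible since $g$ is increasing. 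To check that $U$ is a monotone solution in the sense of Definition~\ref{defes}, I would fix $V\in[b,s]$, $y\in\mathbb{R}_+$ and a point $K_0$ of strict minimum of $K\mapsto(U(K)-V)(K-y)$ on a large interval $[0,R]$; by Lemma~\ref{stegall} there are $a_\epsilon\to0$ such that $K\mapsto(U_\epsilon(K)-V+a_\epsilon)(K-y)$ has a strict minimum $K_\epsilon$, and the comparison argument of Proposition~\ref{stabs} gives $K_\epsilon\to K_0$. As $V$ is fixed in $[b,s]$, the sign of $a_\epsilon$ can be chosen so that $V-a_\epsilon$ stays in $[b,s]$, and then the computation made just before Definition~\ref{defes} applies and gives $rU_\epsilon(K_\epsilon)(K_\epsilon-y)\ge g(K_\epsilon)(K_\epsilon-y)$; letting $\epsilon\to0$ yields the inequality for $U$.

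For uniqueness, let $U$ and $V$ be two monotone solutions and set $W(x,y)=(U(x)-V(y))(x-y)$ on $[0,R]^2$, $R$ large. Assume $\min W<0$. Lemma~\ref{stegall} gives $c,c'$ with $|c|+|c'|$ small such that $(x,y)\mapsto W(x,y)+cx+c'y$ has a strict minimum at $(x_0,y_0)$; fixing $y=y_0$ (resp. $x=x_0$) shows that $x\mapsto(U(x)-(V(y_0)-c))(x-y_0)$ (resp. $y\mapsto(V(y)-(U(x_0)-c'))(y-x_0)$) has a strict minimum at $x_0$ (resp. $y_0$). Applying Definition~\ref{defes} to $U$ with test value $V(y_0)-c$ and to $V$ with test value $U(x_0)-c'$, and adding, gives $rW(x_0,y_0)\ge(g(x_0)-g(y_0))(x_0-y_0)\ge0$ since $g$ is increasing, hence $W(x_0,y_0)\ge0$. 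Because $x_0,y_0\in[0,R]$ and $|c|+|c'|$ can be made arbitrarily small, this forces $\min_{[0,R]^2}W\ge0$, from which $U=V$ and $U$ monotone follow exactly as in Theorem~\ref{uniqstat}.

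The main obstacle, in both steps, is keeping the perturbations given by Stegall's lemma compatible with the constraint, i.e., ensuring that $V-a_\epsilon$ in the existence step, and $V(y_0)-c$ and $U(x_0)-c'$ in the uniqueness step, remain in $[b,s]$. In the existence step $V$ is fixed, so the sign of $a_\epsilon$ is chosen once and for all. In the uniqueness step the issue is that $[b,s]$, unlike the half-space $\{V\le0\}$ of the optimal stopping case, is a bounded interval, so no single nonzero perturbation keeps every candidate test value inside it; one must choose the sign of $c$ (resp. $c'$) depending on whether the minimizer $y_0$ (resp. $x_0$) lies where $V$ (resp. $U$) is close to $b$ or close to $s$, which is legitimate because the set of good perturbations provided by Lemma~\ref{stegall} is dense in every small one-sided neighbourhood of the origin. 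The other point that genuinely uses the model rather than the abstract machinery is the uniform lower bound on $U_\epsilon$ together with the identification $U\ge b$ in the limit, which here plays the role that Hypothesis~\ref{existstop} played for optimal stopping.
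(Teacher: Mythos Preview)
Your approach is exactly the one the paper intends: penalisation plus the Lipschitz estimate for existence, and the Stegall perturbation argument of Theorem~\ref{uniqstat} for uniqueness. The paper's own proof consists of a single sentence (``easily obtained following the previous parts''), so you have in fact written out more than the paper does, and correctly isolated the one genuinely new point: the constraint set $[b,s]$ is two-sided, so unlike the half-line $\{V\le 0\}$ of optimal stopping or the open cone $\mathcal{O}$ of Hypothesis~\ref{clique}, no nonzero translation preserves it uniformly.

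There is, however, a circularity in your resolution of this point that you do not close. You propose to choose the sign of $c$ according to whether $V(y_0)$ is near $b$ or near $s$, but $(x_0,y_0)$ is the strict minimiser \emph{for the perturbation} $(c,c')$, so it is determined only after $(c,c')$ is chosen. Density of the Stegall set in each half-line is not by itself enough: for $(c,c')$ in one quadrant the minimiser may land where the opposite signs would have been needed, and vice versa. One way to break the loop is a two-stage perturbation using stability of strict minima: first pick any small Stegall perturbation to make the minimiser $(x_1,y_1)$ strict, then observe that for further perturbations tending to zero the minimisers converge to $(x_1,y_1)$; the four-quadrant exhaustion (if all quadrants fail, one deduces simultaneously $V(y_1)=b$, $V(y_1)=s$, $U(x_1)=b$, $U(x_1)=s$, contradicting $b<s$) then yields a compatible choice. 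Your argument for $U\ge b$ in the existence part is also heuristic: from $rU_\epsilon-\tfrac{1}{\epsilon}(b-U_\epsilon)\partial_KU_\epsilon=g$ you correctly get $\partial_KU_\epsilon\to 0$ on $\{U<b\}$, but the step ``hence $g$ constant there'' does not follow, since $\tfrac{1}{\epsilon}(b-U_\epsilon)\partial_KU_\epsilon$ is an indeterminate $\infty\cdot 0$ and need not vanish in the limit.
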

\begin{Rem}
Obviously, we only intend to apply the concepts developed in the previous sections here, and extensions of this model could easily be considered, following modeling as in \citep{bertucci2020mean} or simply by adding terms which plays the same role as $F$ and $G$ in the previous sections.
\end{Rem}
\section{Conclusion and future perspectives}
In this paper we have provided a notion of solution to study a class of first order systems of PDE arising from the MFG theory and referred to as master equations. This notion of monotone solutions relies heavily on the so-called monotone structure of the master equation (hypothesis \ref{mon}). Although it was already well known in the MFG community that regularity and uniqueness for the solution of the master equation can be obtained in a monotone regime, we hope that our notion of solution can be helpful to reduce the assumptions made to obtain well-posedness of master equations. Moreover we have shown in this paper that we are able to characterize the value function of MFG involving a variety of novel actions for the players (stopping, jumping, entering) using monotone solutions.

The natural extensions for this notion of solutions are the case in which the master equation takes the form of an infinite dimensional PDE and the case of second order equations. The generalization of this work to infinite dimensional cases is standard and shall be the subject of another work. Concerning the case of second order master equations, defining a notion of monotone solutions for functions which are one time differentiable is straightforward following the techniques we here developed, however to treat functions which are merely continuous is much more involved and shall also be the subject of a future work.

Finally, we do not claim that the notion of monotone solutions is appropriate to address the question of the characterization of a value function in a MFG in a non-monotone regime, in particular because in such a situation, if such a value function exists, it may be discontinuous. Personally, we do not believe this previous problem to be solvable outside under additional structural assumptions on the MFG (potential case, particular couplings, particular information structure...).

\section*{Acknowledgments}
We would like to thank Pierre-Louis Lions (Coll\`ege de France) for pointing out to us the question of optimal stopping in MFG a few years ago and for the numerous discussions we had on this topic.

\bibliographystyle{plainnat}
\bibliography{bibremarks}

\appendix

\section{Two maximum principle results}
\begin{Lemma}\label{max1}
Let $\phi : (0,\infty)\times \mathbb{O}_d \to \mathbb{R}$ be a smooth solution of 
\begin{equation}
\partial_t \phi + F(x,\phi)\cdot\nabla_x \phi - \sum_{i =1}^d \sigma(x^i)\partial_{ii} \phi + \lambda(\phi - \phi(Tx)) \geq 0 \text{ in } (0,\infty)\times \mathbb{O}_d,\\
\phi(0,x) \geq 0 \text{ in } \mathbb{O}_d,
\end{equation}
where $\sigma$ is a positive function with $\sigma (0) = 0$  and $T : \mathbb{O}_d \to \mathbb{O}_d$. Under hypothesis \ref{stab} $\phi$ is a positive function.
\end{Lemma}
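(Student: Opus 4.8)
The statement is a degenerate--parabolic maximum principle, and the plan is to run the classical argument while paying attention to two features: the boundary of $\mathbb{O}_d$ and the fact that $\mathbb{O}_d$ is unbounded. First I would freeze the nonlinearity along the solution, writing $b(t,x):=F(x,\phi(t,x))$, so that $\phi$ is a supersolution of the \emph{linear} degenerate--parabolic operator $L[w]:=\partial_t w+b\cdot\nabla_x w-\sum_{i=1}^d\sigma(x^i)\partial_{ii}w+\lambda\big(w-w(t,Tx)\big)$, that is $L[\phi]\ge 0$ on $(0,\infty)\times\mathbb{O}_d$ and $\phi(0,\cdot)\ge 0$. It suffices to show $\phi\ge 0$ on $[0,t_f]\times\mathbb{O}_d$ for every fixed $t_f>0$.

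Next I would strictify the inequality and make the problem coercive in space. For parameters $\eta,\mu>0$ set $v:=\phi+\eta\,e^{\mu t}(1+|x|^2)$. Since $F$ is (locally) Lipschitz and $\phi$ is (locally) bounded one has $|b(t,x)|\le C(1+|x|)$ on $[0,t_f]\times\mathbb{O}_d$; using this, the boundedness of $\sigma$ (which in every application of the lemma is either $\equiv 0$ or the regularizing kernel of \eqref{regme}) and $\|T\|<\infty$, a direct computation gives $L\big[\eta e^{\mu t}(1+|x|^2)\big]\ge(\mu-C_0)\,\eta e^{\mu t}(1+|x|^2)$ for a constant $C_0$ independent of $\eta$. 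Choosing $\mu:=C_0+1$ yields $L[v]=L[\phi]+L\big[\eta e^{\mu t}(1+|x|^2)\big]>0$ on $[0,t_f]\times\mathbb{O}_d$. Moreover $v(0,\cdot)\ge\eta(1+|x|^2)>0$ and, because $\phi$ has at most quadratic growth from below (as is the case in all situations where this lemma is used), $v(t,x)\to+\infty$ as $|x|\to\infty$ uniformly in $t\in[0,t_f]$.

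Now I argue by contradiction: if $v$ were negative somewhere, then by the coercivity just obtained and continuity $v$ attains a strictly negative minimum over $[0,t_f]\times\mathbb{O}_d$ at some $(t_0,x_0)$, and $t_0>0$ because $v(0,\cdot)>0$. At $(t_0,x_0)$ one reads off: $\partial_t v\le 0$ (interior minimum in time if $t_0<t_f$, one--sided minimum if $t_0=t_f$); for every $i$ with $x_0^i>0$, $\partial_i v=0$ and $\partial_{ii}v\ge 0$; for every $i$ with $x_0^i=0$, $\partial_i v\ge 0$ while $\sigma(x_0^i)=\sigma(0)=0$. Hence, using hypothesis~\ref{stab} (so $F^i(x_0,\cdot)\le 0$ whenever $x_0^i=0$, and $Tx_0\in\mathbb{O}_d$), one gets $b(t_0,x_0)\cdot\nabla_x v=\sum_{i:\,x_0^i=0}F^i(x_0,\phi(t_0,x_0))\,\partial_i v\le 0$, $-\sum_{i}\sigma(x_0^i)\partial_{ii}v=-\sum_{i:\,x_0^i>0}\sigma(x_0^i)\partial_{ii}v\le 0$, and $\lambda\big(v(t_0,x_0)-v(t_0,Tx_0)\big)\le 0$ since $x_0$ is the global minimizer of $v(t_0,\cdot)$. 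Adding these to $\partial_t v\le 0$ gives $L[v](t_0,x_0)\le 0$, contradicting $L[v]>0$. Therefore $v\ge 0$ on $[0,t_f]\times\mathbb{O}_d$; letting $\eta\to 0$ gives $\phi\ge 0$ there, and since $t_f$ was arbitrary, $\phi\ge 0$ on $(0,\infty)\times\mathbb{O}_d$.

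The barrier estimate of the second paragraph is routine computation. The only genuine subtleties --- and the reason hypothesis~\ref{stab} and the condition $\sigma(0)=0$ appear --- live on $\partial\mathbb{O}_d$: one must prevent the transport term from pushing $\phi$ below zero from the boundary (ensured by $F^i\le 0$ on $\{x^i=0\}$) and make the degenerate second--order term harmless there (ensured by $\sigma(0)=0$, so that the possibly wrong sign of $\partial_{ii}v$ at a boundary minimum is irrelevant). The unboundedness of $\mathbb{O}_d$ is absorbed by the coercive weight $1+|x|^2$; alternatively one could localize on the balls $B^1_R$, where compactness is immediate, and then only the face $\{x_1+\dots+x_d=R\}$ needs extra care --- this is where hypothesis~\ref{grow} would enter --- before sending $R\to\infty$.
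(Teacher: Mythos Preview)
Your argument is correct and follows one of the two classical routes to a degenerate--parabolic comparison principle: you add the coercive barrier $\eta e^{\mu t}(1+|x|^2)$, strictify the inequality, and analyze a global minimum of the perturbed function. The boundary analysis (splitting coordinates into $x_0^i>0$ versus $x_0^i=0$, using hypothesis~\ref{stab} for the drift and $\sigma(0)=0$ to kill the second--order term there) is exactly the right idea, as is the treatment of the nonlocal term via $T(\mathbb{O}_d)\subset\mathbb{O}_d$.

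The paper's proof, by contrast, uses the other classical route: a \emph{first--crossing} argument (deferred to \cite{bertucci2019some}). One assumes $\phi$ becomes negative, picks the first time $t_0$ at which $\phi$ touches zero and a spatial minimizer $x_0$ of $\phi(t_0,\cdot)$, so that $\phi(t_0,x_0)=0$, $\partial_t\phi(t_0,x_0)<0$ (strictly, after a standard perturbation), and $\phi(t_0,\cdot)\ge 0$; the contradiction at $(t_0,x_0)$ then proceeds exactly as in your third paragraph. Your barrier approach has the advantage of being fully self--contained and of handling the unboundedness of $\mathbb{O}_d$ and the lack of compactness in time in one stroke; the first--crossing approach avoids introducing the auxiliary growth hypotheses on $\phi$, $F$ and $\sigma$ that your barrier estimate needs (and which you honestly flag as context--dependent). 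In the applications of the paper both sets of assumptions are satisfied, so the two arguments are interchangeable.
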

\begin{proof}
We reason by contradiction. Arguing as in the proof of lemma 3 in \citep{bertucci2019some}, we can assume without loss of generality that there exists $(t_0,x_0) \in (0,\infty)\times \mathbb{O}_d$ such that
\begin{equation}
\begin{cases}
\partial_t \phi (t_0,x_0) < 0;\\
\phi(t_0,x_0) = 0 \leq \phi(t_0,y), \forall y \in \mathbb{O}_d;\\
 \sum_{i =1}^d \sigma(x_0^i)\partial_{ii} \phi(t_0,x_0) \geq 0;\\
 F(x_0,\phi)\cdot\nabla_x \phi (t_0,x_0) \geq 0.
\end{cases}
\end{equation}
Thus we conclude to a contradiction by evaluating the PDE satisfied by $\phi$ at $(t_0,x_0)$. Hence $\phi$ is positive.
\end{proof}
\begin{Lemma}\label{max2}
Let $\phi : \mathbb{O}_d \to \mathbb{R}$ be a smooth solution of 
\begin{equation}
r \phi + F(x,\phi)\cdot\nabla_x \phi - \sum_{i =1}^d \sigma(x^i)\partial_{ii} \phi + \lambda(\phi - \phi(Tx)) \geq 0 \text{ in } (0,\infty)\times \mathbb{O}_d,\\
\phi(0,x) \geq 0 \text{ in } \mathbb{O}_d,
\end{equation}
where $\sigma$ is a positive function with $\sigma (0) = 0$  and $T : \mathbb{O}_d \to \mathbb{O}_d$. Under hypotheses \ref{stab} and \ref{grow}, $\phi$ is a positive function.
\end{Lemma}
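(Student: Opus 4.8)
The plan is to argue by contradiction, adapting the argument of Lemma \ref{max1} to the stationary setting, the role played there by the time variable being replaced here by a localization coming from hypothesis \ref{grow}.

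First I would use hypothesis \ref{grow} to confine the problem to the compact ball $B^1_R$. Since $T$ is linear and $T(B^1_R)\subset B^1_R$, one gets $T(B^1_{R'})\subset B^1_{R'}$ and $\sum_i F^i(x,p)\geq 0$ whenever $|x|_1\geq R'$, for every $R'\geq R$; arguing then as in the stationary analogue of Lemma 3 of \citep{bertucci2019some} (if needed after a small penalization $\phi+\delta\Phi$ with $\Phi$ suitably growing, so that the infimum is attained), it is enough to prove $\phi\geq 0$ on $B^1_R$, and if this failed there would be $x_0\in B^1_R$ with $\phi(x_0)=\min_{B^1_R}\phi<0$. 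Note that then $Tx_0\in B^1_R$, hence $\phi(x_0)-\phi(Tx_0)\leq 0$.

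The heart of the proof is to evaluate the inequality at $x_0$ and check that the transport and second-order terms have a favorable sign. Since $\sigma(0)=0$, the coordinates with $x_0^i=0$ do not contribute to the second-order term, while for $x_0^i>0$ minimality in those directions gives $\partial_{ii}\phi(x_0)\geq 0$, so that $\sum_i\sigma(x_0^i)\partial_{ii}\phi(x_0)\geq 0$. For the transport term, I would write the first-order optimality (KKT) conditions at $x_0$ on $B^1_R$: $\partial_j\phi(x_0)=-\mu_0+\mu_j$ with $\mu_0\geq 0$ ($\mu_0>0$ only if $|x_0|_1=R$) and $\mu_j\geq 0$ ($\mu_j>0$ only if $x_0^j=0$), whence
\[
\langle F(x_0,\phi(x_0)),\nabla\phi(x_0)\rangle=-\mu_0\sum_j F^j(x_0,\phi(x_0))+\sum_{j:\,x_0^j=0}\mu_j\,F^j(x_0,\phi(x_0))\leq 0,
\]
using $\sum_j F^j\geq 0$ on $\{|x|_1\geq R\}$ from hypothesis \ref{grow} and $F^j(x_0,\cdot)\leq 0$ on $\{x^j=0\}$ from hypothesis \ref{stab}. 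Inserting these signs together with $\lambda(\phi(x_0)-\phi(Tx_0))\leq 0$ into the inequality satisfied by $\phi$ at $x_0$ gives $r\phi(x_0)\geq 0$, contradicting $\phi(x_0)<0$ since $r>0$. Hence $\phi\geq 0$ on $B^1_R$, and letting $R$ grow concludes.

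I expect the main obstacle to be precisely the localization step: unlike the parabolic case there is no first time at which $\phi$ touches $0$, so one must genuinely use hypothesis \ref{grow} to keep the relevant minima inside $B^1_R$, which requires care so that an auxiliary penalization does not spoil the sign of the terms it creates (this is exactly where $\sum_i F^i\geq 0$ outside $B^1_R$ enters) and so that the nonlocal term $\phi(\cdot)-\phi(T\cdot)$ remains controlled. A secondary technical point is that when $x_0$ lies on the outer face $\{|x|_1=R\}$ the inequality $\partial_{ii}\phi(x_0)\geq 0$ is not automatic, so one may need an additional infinitesimal perturbation (or directly the construction of Lemma 3 of \citep{bertucci2019some}) to force $x_0$ into $\{|x|_1<R\}$.
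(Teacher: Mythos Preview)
Your proposal is correct and follows essentially the same route as the paper: take a global minimizer $x_0$ of $\phi$ on $B^1_R$, use hypotheses \ref{stab} and \ref{grow} to control the sign of $F\cdot\nabla\phi$ at the boundary, use $T(B^1_R)\subset B^1_R$ for the nonlocal term, and evaluate the inequality at $x_0$ to get $r\phi(x_0)\geq 0$. The paper's proof is considerably terser---it packages your KKT computation into the single sentence ``$-F(x,p)$ points inward $B^1_R$ for any $x\in\partial B^1_R$''---and in particular does not discuss the second-order term on the outer face that you (rightly) flag as a technical point; so your write-up is, if anything, more careful than the original.
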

\begin{proof}
Consider $R$ sufficiently large and $x_0 := \text{argmin}\{\phi(x) | x \in B^1_R\}$. Assumptions \ref{stab} and \ref{grow} precisely state that $-F(x,p)$ points inward $B^1_R$ for any $x \in \partial B^1_R, p \in \mathbb{R}^d$. Thus evaluating the PDE satisfied by $\phi$ at $x_0$ we deduce that $\phi(x_0) \geq 0$, hence that $\phi$ is a positive function.
\end{proof}

\section{Finite state MFG on the orthant}
In this section we consider a problem which arises from the discretization of a continuous problem, mainly to highlight the links between a master equation on $\mathbb{O}_d$ and on the simplex. For other examples of master equations in finite state space, we refer to \citep{bayraktar2019finite,bertucci2020mean}. Namely we are interested in the case in which $G$ is given by 
\begin{equation}\label{discrete}
G_i(x,p) = f_i(x) - \sum_{j \ne i}H((p_{j}- p_i)_-) ; (x,p) \in (\mathbb{R}_+)^d \times \mathbb{R}^d;
\end{equation}
where $H : \mathbb{R} \to \mathbb{R}$ is a function with quadratic growth whose second derivative is bounded from above and from below by a non negative constant. We then define $F(x,p) =- D_pG(x,p)x$. The master equation in such a cases is the PDE of unknown $U$ given by
\begin{equation}\label{degmfg}
\partial_t U  + (F(x,U)\cdot \nabla_x)U = G(x, U) \text{ in } (0,T)\times \mathbb{R}^d;
\end{equation}
where for the sake of clarity we omit terms modeling common noise, such terms could be treated following the same technique as in the previous sections. Obviously such a master equation is associated to games in which the number of players is conserved, i.e. 
\begin{equation}
\sum_{i}F_i = 0.
\end{equation}
Let us consider a reduction of (\ref{degmfg}). We define $V:(0,T)\times \mathbb{R}^{d-1} \to \mathbb{R}^{d-1}$ by :
\begin{equation}\label{defV2}
V_i(t,z) = U_i(t,\phi(z)) - U_d(t,\phi(z));
\end{equation}
for $1\leq i \leq d$, where $\phi : \mathbb{R}^{d-1}\to \mathbb{R}$ is defined by $\phi(z) = (z_1,z_2,...,z_{d-1},1 - \sum_i z_i)$. The natural equation to consider for $V$ is
\begin{equation}\label{changmfg}
\partial_t V + (\tilde{F}(z,V)\cdot\nabla_z)V = \tilde{G}(z,V) \text{ in } (0, \infty) \times \Sigma^d;
\end{equation}
where $\Sigma^d:= \{ x \in \mathbb{O}_{d-1} | x_1 + ...+ x_{d-1}= 1\}$ and $\tilde{G}$ and $\tilde{F}$ are defined for $i \in \{1,...,d-1\}$ by
\begin{equation}
\begin{cases}
\tilde{G}^i(z,V) = G^i(\phi(z),\psi(V))- G^d(\phi(z),\psi(V)),\\
\tilde{F}^i(z,V) = F^i(\phi(z),\psi(V)),
\end{cases}
\end{equation}
where $\psi(p) = (p_1,p_2,...,p_{d-1},0)$ for $p \in \mathbb{R}^{d-1}$. The following result then easily follows.
\begin{Prop}
Let us consider two functions $U: (0,\infty) \times \mathbb{R}^d \to \mathbb{R}^d$ and $V: (0,\infty)\times \mathbb{R}^{d-1} \to \mathbb{R}^{d-1}$ which satisfy (\ref{defV2}). Then if $U$ solves (\ref{degmfg}), $V$ solves (\ref{changmfg}).

Moreover, if $x \to (f_i(x))_{1\leq i \leq d}$ is monotone and $H$ is convex, then both $(G,F)$ and $(\tilde{G},\tilde{F})$ are monotone.
\end{Prop}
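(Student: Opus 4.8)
The statement has two parts: first, that the change of variables $(\ref{defV2})$ carries a solution of $(\ref{degmfg})$ to a solution of $(\ref{changmfg})$, and second, the two monotonicity claims. For the first, since $\phi$ is affine with $\partial_{z_k}\phi(z)=e_k-e_d$ for $1\le k\le d-1$ (the $-e_d$ coming from the last coordinate $1-\sum_i z_i$), the chain rule gives $\partial_{z_k}V_i(t,z)=(\partial_{x_k}-\partial_{x_d})(U_i-U_d)(t,\phi(z))$. Subtracting the $d$-th line of $(\ref{degmfg})$ from the $i$-th, evaluated at $x=\phi(z)$, the coupling becomes $G^i-G^d=\tilde{G}^i$, the time derivative becomes $\partial_t V_i$, and — using $\sum_jF^j=0$ — the transport term rearranges as $\sum_{j=1}^dF^j(\partial_{x_j}U_i-\partial_{x_j}U_d)=\sum_{k=1}^{d-1}F^k\,\partial_{z_k}V_i$, the difference of the two sides being $(\partial_{x_d}U_i-\partial_{x_d}U_d)\sum_jF^j=0$. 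Finally, because $G(x,p)$, and hence $F(x,p)=-D_pG(x,p)x$, depends on $p$ only through the differences $p_j-p_i$, both are invariant under $p\mapsto p+c\mathbf{1}$; since $\psi(V(t,z))=U(t,\phi(z))-U_d(t,\phi(z))\mathbf{1}$, this gives $F^k(\phi(z),U(t,\phi(z)))=\tilde{F}^k(z,V(t,z))$, and likewise for $G$, so the identity obtained is exactly $(\ref{changmfg})$ — the flow staying on $\{\sum_jx_j=1\}$ by the same conservation identity, so no boundary condition intervenes.

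For the monotonicity of $(G,F)$ the key is the potential structure: setting $J(x,p):=-\sum_{i=1}^dx_i\sum_{j\ne i}H\big((p_j-p_i)_-\big)$, which is linear in $x$, one has $G_i=f_i(x)+\partial_{x_i}J(x,p)$ and $F_i=-\partial_{p_i}J(x,p)$, so $(G,F)(x,p)=(f(x),0)+(\nabla_xJ,-\nabla_pJ)(x,p)$. The summand $(f(x),0)$ contributes $\langle f(x)-f(y),x-y\rangle\ge0$ by monotonicity of $f$. For the other summand, set $J_k(p):=\partial_{x_k}J(\cdot,p)=-\sum_{j\ne k}H((p_j-p_k)_-)$; expanding $\langle\nabla_xJ(x,p)-\nabla_xJ(y,q),x-y\rangle-\langle\nabla_pJ(x,p)-\nabla_pJ(y,q),p-q\rangle$ and regrouping by the coefficients $x_k$ and $y_k$ gives
\begin{equation*}
\begin{aligned}
&\sum_{k=1}^dx_k\big(J_k(p)-J_k(q)-\langle\nabla J_k(p),p-q\rangle\big)\\
&\quad+\sum_{k=1}^dy_k\big(J_k(q)-J_k(p)-\langle\nabla J_k(q),q-p\rangle\big).
\end{aligned}
\end{equation*}
Each $J_k$ is concave in $p$ (the map $p\mapsto(p_j-p_i)_-$ is convex and $H$ is convex), so both parentheses are nonnegative first-order concavity defects; since $x_k,y_k\ge0$, the whole expression is $\ge0$. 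This is precisely where the restriction to the orthant $\mathbb{O}_d$ enters.

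The monotonicity of $(\tilde{G},\tilde{F})$ then follows with no further work, because the map $(z,V)\mapsto(x,p):=(\phi(z),\psi(V))$ is an isometry for the bilinear form in the monotonicity inequality. Indeed $z_i-w_i=x_i-y_i$ and $V_i-W_i=p_i-q_i$ for $i\le d-1$, while $\sum_{i=1}^d(x_i-y_i)=0$ (both $x$ and $y$ lie on $\{\sum=1\}$) and $p_d=q_d=0$; substituting $x_d-y_d=-\sum_{i\le d-1}(z_i-w_i)$ yields
\begin{equation*}
\begin{aligned}
\langle\tilde{G}(z,V)-\tilde{G}(w,W),z-w\rangle&+\langle\tilde{F}(z,V)-\tilde{F}(w,W),V-W\rangle\\
&=\langle G(x,p)-G(y,q),x-y\rangle+\langle F(x,p)-F(y,q),p-q\rangle,
\end{aligned}
\end{equation*}
which is $\ge0$ by the previous step, since $\phi$ maps the simplex into $\mathbb{O}_d$.

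None of this is genuinely hard. The two points deserving care are: (i) that the reduced drift $\tilde{F}$ in $(\ref{changmfg})$ is the correct object, which rests entirely on the translation invariance $G(x,p+c\mathbf{1})=G(x,p)$ together with $\sum_jF^j=0$; and (ii) that in the monotonicity argument it is the componentwise nonnegativity of $x$ and $y$ — not any sign condition on $p$ or $q$ — that forces the concavity defects to sum to a nonnegative quantity, which is the structural reason the whole construction is posed on $\mathbb{O}_d$. The only mildly technical verification is that the assumptions on $H$ make $p\mapsto H((p_j-p_i)_-)$ convex.
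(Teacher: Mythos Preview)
Your argument is correct in structure, and the paper offers no proof to compare against (it says only that the result ``easily follows''). One point deserves correction, however: your parenthetical that $J_k$ is concave because ``$p\mapsto(p_j-p_i)_-$ is convex and $H$ is convex'' invokes a false composition rule --- convex-of-convex is not convex in general. What is actually needed is that $t\mapsto H(t_+)$ be convex on $\mathbb{R}$, which (given $H''\ge 0$) reduces to $H'(0)\ge 0$; without this, $(G,F)$ can genuinely fail to be monotone (take $d=2$, $f=0$, $H(q)=q^2-2q$, $y=0$, $x=e_1$, and $p,q$ on either side of the kink of $J_1$). The condition $H'(0)\ge 0$ is natural for a Hamiltonian obtained as the Legendre transform of a running cost on nonnegative transition rates, and is almost certainly implicit in the paper's setup, but it is not a consequence of ``$H$ convex'' alone. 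With that proviso noted, your concavity-defect decomposition for $(G,F)$ and your isometry argument reducing $(\tilde G,\tilde F)$ to $(G,F)$ via $(z,V)\mapsto(\phi(z),\psi(V))$ are both clean and correct.
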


\end{document}